\newtheorem{theorem}{Theorem}[section]
\newtheorem{proposition}[theorem]{Proposition}
\newtheorem{corollary}[theorem]{Corollary}
\newtheorem{lemma}[theorem]{Lemma}
\theoremstyle{definition}
\newtheorem{definition}[theorem]{Definition}
\newtheorem{example}[theorem]{\textbf{Example}}
\newtheorem{remark}[theorem]{\textbf{Remark}}
\newtheorem{remarks}[theorem]{\textbf{Remarks}}
\newtheorem{notation}[theorem]{\textbf{Notation}}
\title{An indispensable classification of monomial curves in $\mathbb{A}^4(\mathbbmss{k}) $}
\author{Anargyros Katsabekis \and Ignacio Ojeda}
\address{Centrum Wiskunde \& Informatica (CWI), Postbus 94079, 1090 GB Amsterdam, The Netherlands.} \email{katsabek@aegean.gr}
\address{Departamento de Matem\'aticas, Universidad de Extremadura,
E-06071 Badajoz (Spain).} \email{ojedamc@unex.es}
\keywords{Binomial ideal, toric ideal, monomial curve, minimal systems of generators, indispensable monomials, indispensable binomials.}
\thanks{The second author is partially supported by the project  MTM2012-36917-C03-01, National Plan I+D+I and by Junta de Extremadura (FEDER funds).}
\subjclass{13F20 (Primary) 16W50, 13F55 (Secondary).}
\begin{document}

\date{\today}

\begin{abstract}
In this paper a new classification of monomial curves in $\mathbb{A}^4(\mathbbmss{k})$ is given. Our classification relies on the detection of those binomials and monomials that have to appear in every system of binomial generators of the defining ideal of the monomial curve; these special binomials and monomials are called indispensable in the literature. This way to proceed has the advantage of producing a natural necessary and sufficient condition for the defining ideal of a monomial curve in $\mathbb{A}^4(\mathbbmss{k})$ to have a unique minimal system of binomial generators. Furthermore, some other interesting results on more general classes of binomial ideals with unique minimal system of binomial generators are obtained.
\end{abstract}

\maketitle

\section*{Introduction}

\par Let $\mathbbmss{k}[\mathbf{x}] := \mathbbmss{k}[x_1, \ldots, x_n]$ be the polynomial ring in $n$ variables over a field $\mathbbmss{k}.$ As usual, we will denote by $\mathbf{x}^\mathbf{u}$ the monomial $x_1^{u_1} \cdots x_n^{u_n}$ of $\mathbbmss{k}[\mathbf{x}] ,$ with $\mathbf{u} = (u_1, \ldots, u_n) \in \mathbb{N}^n,$ where $\mathbb{N}$ stands for the set of non-negative integers. Recall that a pure difference binomial ideal is an ideal of $\mathbbmss{k}[\mathbf{x}]$ generated by differences of monic monomials. Examples of pure difference binomial ideals are the toric ideals. Indeed, let $\mathcal{A} = \{\mathbf{a}_1, \ldots, \mathbf{a}_n\} \subset \mathbb{Z}^d$ and consider the semigroup homomorphism $\pi : \mathbbmss{k}[\mathbf{x}] \to \mathbbmss{k}[\mathcal{A}] := \bigoplus_{\mathbf{a} \in \mathcal{A}} \mathbbmss{k}\, \mathbf{t}^\mathbf{a};\ x_i \mapsto \mathbf{t}^{\mathbf{a}_i}.$ The kernel of $\pi$ is denoted by $I_\mathcal{A}$ and called the toric ideal of $\mathcal{A}.$ Notice that the toric ideal $I_\mathcal{A}$ is generated by all the binomials $\mathbf{x}^\mathbf{u} - \mathbf{x}^\mathbf{v}$ such that $\pi(\mathbf{x}^\mathbf{u}) = \pi(\mathbf{x}^\mathbf{v})$, see, for example, \cite[Lemma 4.1]{Sturmfels95}.

Defining ideals of monomial curves in the affine $n$-dimensional space $\mathbb{A}^n(\mathbbmss{k})$ serve as interesting examples of toric ideals. Of particular interest is to compute and describe a minimal generating set for such an ideal. In \cite{Herzog70} Herzog provides a minimal system of generators for the defining ideal of a monomial space curve. The case $n=4$ was treated by Bresisnky in \cite{Bresinsky88}, where Gr\"{o}bner bases techniques have been used to obtain a a minimal generating set of the ideal.

A recent topic arising in Algebraic Statistics is to study the problem when a toric ideal has a unique minimal system of binomial generators, see \cite{Charalambous07}, \cite{OjVi2}. To deal with this problem, Ohsugi and Hibi introduced in \cite{OhHi05} the notion of indispensable binomials, while Aoki, Takemura and Yoshida introduced in \cite{ATY08} the notion of indispensable monomials. The problem was considered for the case of defining ideals of monomial curves in \cite{GOj10}. Although this work offers useful information, the classification of the ideals having a unique minimal system of binomial generators remains an unsolved problem for $n \geq 4$. For monomial space curves Herzog's result provides an explicit classification of those defining ideals satisfying the above property. The aim of this work is to classify all defining ideals of monomial curves in $\mathbb{A}^4(\mathbbmss{k})$ having a unique minimal system of generators. Our approach is inspired by the classification made by Pilar Pis\'{o}n in her unpublished thesis.

The paper is organized as follows. In section 1 we study indispensable monomials and binomials of a pure difference binomial ideal. We provide a criterion for checking whether a monomial is indispensable, see Theorem \ref{Thm CombIndMonJ}, and also a sufficient condition for a binomial to be indispensable, see Theorem \ref{Thm CombIndBinJ}. As an application we prove that the binomial edge ideal of an undirected simple graph has a unique minimal system of binomial generators. Section 2 is devoted to special classes of binomial ideals contained in the defining ideal of a monomial curve. Corollary \ref{Cor Critical} underlines the significance of the critical ideal in the investigation of our problem. Theorem \ref{Indisp Circuits1} and Proposition \ref{Indisp Circuits2} provide  necessary and sufficient conditions for a circuit to be indispensable of the toric ideal, while Corollary \ref{Cor indispensablebinom3} will be particularly useful in the next section. In section 3 we study defining ideals of monomial curves in $\mathbb{A}^4(\mathbbmss{k})$. Theorem \ref{Thm Critical2} carries out a thorough analysis of a minimal generating set of the critical ideal. This analysis is used to derive a minimal generating set for the defining ideal of the monomial curve, see Theorem \ref{Th Main}. As a consequence we obtain the desired classification, see Theorem \ref{Thm Critical1}. Finally we prove that the defining ideal of a Gorenstein monomial curve in $\mathbb{A}^4(\mathbbmss{k})$ has a unique minimal system of binomial generators, under the hypothesis that the ideal is not a complete intersection.


\section{Generalities on indispensable monomials and binomials}

Let $\mathbbmss{k}[\mathbf{x}]$ be the polynomial ring over a field $\mathbbmss{k}.$ The following result is folklore, but for a lack of reference we sketch a proof.

\begin{theorem}\label{Th AgradJ}
Let $J \subset \mathbbmss{k}[\mathbf{x}]$ be a pure difference binomial ideal. There exist a positive integer $d$ and a vector configuration $\mathcal{A} = \{\mathbf{a}_1, \ldots, \mathbf{a}_n\} \subset \mathbb{Z}^d$ such that the toric ideal $I_\mathcal{A}$ is a minimal prime of $J.$
\end{theorem}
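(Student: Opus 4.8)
The statement asserts that any pure difference binomial ideal $J$ admits a toric ideal as a minimal prime, where the vector configuration $\mathcal{A}$ lives in some lattice $\mathbb{Z}^d$. The plan is to realize $J$ — or rather one of its minimal primes — as the kernel of a semigroup homomorphism by reading off the lattice of exponent differences. First I would consider the lattice
\[
L := \{ \mathbf{u} - \mathbf{v} \in \mathbb{Z}^n \mid \mathbf{x}^\mathbf{u} - \mathbf{x}^\mathbf{v} \in J \},
\]
or more precisely the sublattice $L_J \subseteq \mathbb{Z}^n$ generated by all exponent differences $\mathbf{u}-\mathbf{v}$ coming from the pure difference binomials in a generating set of $J$. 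Associated to $L_J$ there is a lattice ideal $I_{L_J} := (\mathbf{x}^{\mathbf{w}^+} - \mathbf{x}^{\mathbf{w}^-} \mid \mathbf{w} \in L_J) \subseteq \mathbbmss{k}[\mathbf{x}]$, where $\mathbf{w}^+, \mathbf{w}^-$ denote the positive and negative parts of $\mathbf{w}$. One checks that $J \subseteq I_{L_J}$, and in fact $I_{L_J}$ is obtained from $J$ by saturating with respect to $x_1\cdots x_n$; in particular $J$ and $I_{L_J}$ have the same minimal primes that do not contain any variable.

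The second step is to pass from the lattice $L_J$ to a \emph{saturated} lattice and hence to a genuine toric ideal. The key algebraic input (Eisenbud--Sturmfels on binomial ideals) is that the minimal primes of a lattice ideal $I_{L_J}$ over a field are again binomial, and are of the form $I_{\rho, L'}$ where $L'$ is the saturation $\mathrm{Sat}(L_J) = (L_J \otimes \mathbb{Q}) \cap \mathbb{Z}^n$ and $\rho$ is a partial character extending the trivial one; taking the trivial partial character (legitimate since $J$ is \emph{pure} difference, so no coefficients other than $1$ and $-1$ occur) gives the minimal prime $I_{\mathrm{Sat}(L_J)}$. Because $\mathrm{Sat}(L_J)$ is saturated, $\mathbb{Z}^n / \mathrm{Sat}(L_J)$ is free, say of rank $d$; choosing an isomorphism $\mathbb{Z}^n/\mathrm{Sat}(L_J) \cong \mathbb{Z}^d$ and letting $\mathbf{a}_i$ be the image of the $i$-th standard basis vector $\mathbf{e}_i$ produces a configuration $\mathcal{A} = \{\mathbf{a}_1, \ldots, \mathbf{a}_n\} \subset \mathbb{Z}^d$. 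By construction the semigroup homomorphism $\pi : \mathbbmss{k}[\mathbf{x}] \to \mathbbmss{k}[\mathcal{A}]$, $x_i \mapsto \mathbf{t}^{\mathbf{a}_i}$, has kernel exactly $I_{\mathrm{Sat}(L_J)} = I_\mathcal{A}$, which is a minimal prime of $J$.

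Finally I would record why this prime is among the \emph{minimal} primes of $J$ and not merely a prime containing it: since $\mathrm{Sat}(L_J) \otimes \mathbb{Q} = L_J \otimes \mathbb{Q}$, the ideal $I_\mathcal{A}$ has the same Krull dimension $n - \mathrm{rank}(L_J)$ as any minimal prime of $J$, and $I_\mathcal{A} \supseteq I_{L_J} \supseteq J$, so a minimal prime of $J$ contained in $I_\mathcal{A}$ must equal $I_\mathcal{A}$ by the dimension count. The main obstacle — really the only non-formal point — is justifying that the trivial partial character is the one that appears, i.e. that $I_{\mathrm{Sat}(L_J)}$ (with all coefficients $\pm 1$) is genuinely a minimal prime; this is exactly where purity of the difference binomials is used, and where one invokes the structure theory of binomial ideals. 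Everything else is bookkeeping with lattices and the elementary divisor theorem. (Since the paper only needs \emph{existence} of such an $\mathcal{A}$, one could alternatively shortcut the argument by quoting \cite[Corollary 2.5]{Eisenbud96} directly, but spelling out the lattice construction makes the later sections' explicit computations transparent.)
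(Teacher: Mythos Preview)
Your approach is essentially the same as the paper's: form the lattice $\mathcal{L}$ of exponent differences, pass to the lattice ideal $I_\mathcal{L} = \big(J : (x_1\cdots x_n)^\infty\big)$ via \cite[Corollary~2.5]{Eisenbud96}, saturate the lattice and invoke \cite[Corollary~2.2]{Eisenbud96} to identify $I_{\mathrm{Sat}(\mathcal{L})}$ as the unique pure-difference minimal prime of $I_\mathcal{L}$, and then read off $\mathcal{A}$ from the free quotient $\mathbb{Z}^n/\mathrm{Sat}(\mathcal{L}) \cong \mathbb{Z}^d$.

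One small imprecision in your minimality step: you assert that \emph{any} minimal prime of $J$ has Krull dimension $n - \mathrm{rank}(L_J)$, but $J$ need not be equidimensional (it may have minimal primes containing variables, of different height). What you actually need is that a minimal prime $P$ of $J$ with $P \subseteq I_\mathcal{A}$ contains no variable (since $I_\mathcal{A}$ does not), hence $P \supseteq \big(J : (x_1\cdots x_n)^\infty\big) = I_{L_J}$, and then your dimension count applies. The paper handles this more directly: it simply observes that $J \subseteq P \subseteq I_\mathcal{A}$ forces $I_{L_J} \subseteq P$ (because $P$ is prime and contains no $x_i$), whence $P = I_\mathcal{A}$ by minimality of $I_\mathcal{A}$ over $I_{L_J}$.
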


\begin{proof}
By \cite[Corollary 2.5]{Eisenbud96}, $\big( J : (x_1 \cdots x_n)^\infty \big)$ is a lattice ideal. More precisely, if $\mathcal{L} = \mathrm{span}_\mathbb{Z} \{ \mathbf{u} - \mathbf{v} \mid \mathbf{x}^\mathbf{u} - \mathbf{x}^\mathbf{v} \in J \},$ then $$\big( J : (x_1 \cdots x_n)^\infty \big) = \langle  \mathbf{x}^\mathbf{u} - \mathbf{x}^\mathbf{v} \mid \mathbf{u} - \mathbf{v} \in \mathcal{L} \rangle =: I_\mathcal{L} .$$ Now, by \cite[Corollary 2.2]{Eisenbud96}, the only minimal prime of $I_\mathcal{L}$ that is a pure difference binomial ideal is
$I_{\mathrm{Sat}(\mathcal{L}) } :=   \langle  \mathbf{x}^\mathbf{u} - \mathbf{x}^\mathbf{v} \mid \mathbf{u} - \mathbf{v} \in \mathrm{Sat}(\mathcal{L}) \rangle,$ where $\mathrm{Sat}(\mathcal{L}) := \{ \mathbf{u} \in \mathbb{Z}^n \mid z\, \mathbf{u} \in \mathcal{L}\ \text{for some}\ z \in \mathbb{Z}\}.$ Since $\mathbb{Z}^n/\mathrm{Sat}(\mathcal{L})  \cong \mathbb{Z}^d,$ for $d = n - \mathrm{rank}(\mathcal{L}),$ then $\mathbf{e}_i + \mathrm{Sat}(\mathcal{L}) = \mathbf{a}_i \in \mathbb{Z}^d,$ for every $i = 1, \ldots, n,$ and hence the toric ideal of $\mathcal{A} = \{ \mathbf{a}_1, \ldots, \mathbf{a}_n\}$ is equal to $I_{\mathrm{Sat}(\mathcal{L})}$ (see \cite[Lemma 12.2]{Sturmfels95}).

Finally, in order to see that $I_\mathcal{A}$ is a minimal prime of $J,$ it suffices to note that $J \subseteq P$ implies $\big( J : (x_1 \cdots x_n)^\infty \big) \subseteq P,$ for every prime ideal $P$ of $\mathbbmss{k}[\mathbf{x}].$
\end{proof}

\begin{remark}
Observe that if $J = \langle \mathbf{x}^{\mathbf{u}_j} - \mathbf{x}^{\mathbf{v}_j}\ \mid\ j = 1, \ldots, s \rangle,$ then $\mathcal{L} = \mathrm{span}_\mathbb{Z}\{ \mathbf{u}_j - \mathbf{v}_j\ \mid\ j = 1, \ldots, s \}.$ So, it is easy to see that, in general, $J \neq I_\mathcal{L}.$ For example, if $J = \langle x-y, z-t, y^2-yt \rangle,$ then $I_\mathcal{L} = \langle x-t, y-t, z-t \rangle.$
\end{remark}

Given a vector configuration $\mathcal{A} = \{ \mathbf{a}_1, \ldots, \mathbf{a}_n \} \subset \mathbb{Z}^d,$ we grade $\mathbbmss{k}[\mathbf{x}]$ by setting $\deg_\mathcal{A}(x_i) = \mathbf{a}_i,\ i = 1, \ldots, n.$  We define the $\mathcal{A}-$degree of a monomial $\mathbf{x}^\mathbf{u}$ to be $$\deg_\mathcal{A}(\mathbf{x}^\mathbf{u}) = u_1 \mathbf{a}_1 + \cdots + u_n \mathbf{a}_n.$$ A polynomial $f \in \mathbbmss{k}[\mathbf{x}]$ is $\mathcal{A}-$homogeneous if the $\mathcal{A}-$degrees of all the monomials that occur in $f$ are the same. An ideal $J \subset \mathbbmss{k}[\mathbf{x}]$ is $\mathcal{A}-$homogeneous if it is generated by $\mathcal{A}-$homogeneous polynomials. Notice that the toric ideal $I_\mathcal{A}$ is $\mathcal{A}-$homogeneous; indeed, by \cite[Lemma 4.1]{Sturmfels95}, a binomial $\mathbf{x}^\mathbf{u} - \mathbf{x}^\mathbf{v} \in I_\mathcal{A}$ if and only if it is $\mathcal{A}-$homogeneous.

The proof of the following result is straightforward.

\begin{corollary}\label{Cor AgradJ}
Let $J \subset \mathbbmss{k}[\mathbf{x}]$ be a pure difference binomial ideal and let $\mathcal{A} = \{\mathbf{a}_1, \ldots, \mathbf{a}_n\} \subset \mathbb{Z}^d.$ Then $J$ is $\mathcal{A}-$homogeneous if and only if $J \subseteq I_\mathcal{A}.$
\end{corollary}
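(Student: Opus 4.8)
The plan is to read both implications off from the description of $J$ as a pure difference binomial ideal, using \cite[Lemma 4.1]{Sturmfels95} as the bridge between the two conditions: that result says a binomial $\mathbf{x}^\mathbf{u} - \mathbf{x}^\mathbf{v}$ lies in $I_\mathcal{A}$ if and only if it is $\mathcal{A}$-homogeneous. So fix a set of pure difference binomial generators $J = \langle g_1, \ldots, g_r \rangle$ with $g_j = \mathbf{x}^{\mathbf{u}_j} - \mathbf{x}^{\mathbf{v}_j}$. The implication ``$J \subseteq I_\mathcal{A} \Rightarrow J$ is $\mathcal{A}$-homogeneous'' is then immediate: each $g_j$ lies in $J \subseteq I_\mathcal{A}$, hence is $\mathcal{A}$-homogeneous by \cite[Lemma 4.1]{Sturmfels95}, so $J$ is generated by $\mathcal{A}$-homogeneous polynomials and is $\mathcal{A}$-homogeneous by definition.

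For the converse, assume $J$ is $\mathcal{A}$-homogeneous; it suffices to show that each $g_j$ is $\mathcal{A}$-homogeneous, since then $g_j \in I_\mathcal{A}$ by \cite[Lemma 4.1]{Sturmfels95} and therefore $J \subseteq I_\mathcal{A}$. Suppose some $g_j$ fails to be $\mathcal{A}$-homogeneous, i.e.\ $\deg_\mathcal{A}(\mathbf{x}^{\mathbf{u}_j}) \neq \deg_\mathcal{A}(\mathbf{x}^{\mathbf{v}_j})$. Then $\mathbf{x}^{\mathbf{u}_j}$ and $-\mathbf{x}^{\mathbf{v}_j}$ are the two $\mathcal{A}$-homogeneous components of $g_j$, sitting in distinct $\mathcal{A}$-degrees; since $J$ is $\mathcal{A}$-homogeneous, the $\mathcal{A}$-homogeneous components of any polynomial in $J$ again lie in $J$, so in particular $J$ contains the monomial $\mathbf{x}^{\mathbf{u}_j}$. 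To get a contradiction I would invoke the elementary fact that a pure difference binomial ideal contains no monomial: the $\mathbbmss{k}$-algebra homomorphism $\mathbbmss{k}[\mathbf{x}] \to \mathbbmss{k}$ sending every $x_i$ to $1$ kills each $g_j$, hence kills $J$, yet it sends $\mathbf{x}^{\mathbf{u}_j}$ to $1 \neq 0$. Thus every $g_j$ is $\mathcal{A}$-homogeneous and $J \subseteq I_\mathcal{A}$.

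The whole argument is routine; the only place a careless reader might slip is the converse. Indeed, $\mathcal{A}$-homogeneity of $J$ only gives a priori a generating set of $\mathcal{A}$-homogeneous \emph{polynomials}, and one must use that $J$ is actually cut out by pure difference binomials — together with the no-monomial observation above — to conclude that these generators may be taken to be $\mathcal{A}$-homogeneous \emph{binomials}; once that is in hand, \cite[Lemma 4.1]{Sturmfels95} closes the loop. (Alternatively, one can avoid mentioning generating sets and argue directly that every pure difference binomial $\mathbf{x}^\mathbf{u}-\mathbf{x}^\mathbf{v}\in J$ must have $\deg_\mathcal{A}(\mathbf{x}^\mathbf{u})=\deg_\mathcal{A}(\mathbf{x}^\mathbf{v})$, by exactly the same no-monomial argument, and then appeal to Corollary~\ref{Cor AgradJ}'s hypothesis that $J$ is generated by such binomials.)
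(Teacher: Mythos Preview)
Your argument is correct; the paper itself omits the proof entirely, declaring the result ``straightforward,'' and your write-up is exactly the routine verification one would expect, hinging on \cite[Lemma 4.1]{Sturmfels95} together with the observation that a pure difference binomial ideal contains no monomial. There is nothing to add.
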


Notice that the finest $\mathcal{A}-$grading on $\mathbbmss{k}[\mathbf{x}]$ such that a pure difference binomial ideal $J  \subset \mathbbmss{k}[\mathbf{x}]$ is $\mathcal{A}-$homogeneous occurs when $I_\mathcal{A}$ is a minimal prime of $J.$ Such an $\mathcal{A}-$grading does always exist by Theorem \ref{Th AgradJ}. Ideals with finest $\mathcal{A}-$grading are studied in much greater generality in \cite{KaAp10}.
An $\mathcal{A}-$grading on $\mathbbmss{k}[\mathbf{x}]$ such that a pure difference binomial ideal $J  \subset \mathbbmss{k}[\mathbf{x}]$ is $\mathcal{A}-$homogeneous is said to be positive if the quotient ring $\mathbbmss{k}[\mathbf{x}]/I_\mathcal{A}$ does not contain invertible elements or, equivalently, if the monoid $\mathbb{N} \mathcal{A}$ is free of units.

It is well known that \emph{the number of polynomials of $\mathcal{A}-$degree $\mathbf{b} \in \mathbb{N}\mathcal{A}$ in any minimal system of $\mathcal{A}$-homogeneous generators is $\dim_\mathbbmss{k} \mathrm{Tor}_1^R(\mathbbmss{k}, \mathbbmss{k}[\mathcal{A}])_\mathbf{b}$} (see, e.g. \cite[Chapter 12]{Sturmfels95}).
 Thus, we say that $I_\mathcal{A}$ has minimal generators in degree $\mathbf{b}$ when $\dim_\mathbbmss{k} \mathrm{Tor}_1^R(\mathbbmss{k}, \mathbbmss{k}[\mathcal{A}])_\mathbf{b} \neq 0.$ In this case, if $f \in I_\mathcal{A}$ has degree $\mathbf{b}$ we say that $f$ is a \textbf{minimal generator} of $I_\mathcal{A}.$
 
From now on, let $\mathcal{A} = \{\mathbf{a}_1, \ldots, \mathbf{a}_n\} \subset \mathbb{Z}^d$ be such that
the quotient ring $\mathbbmss{k}[\mathbf{x}]/I_\mathcal{A}$ does not contain invertible elements and let
$J  \subset \mathbbmss{k}[\mathbf{x}]$ be an $\mathcal{A}-$homogeneous pure difference binomial ideal.

\begin{definition}
A \textbf{binomial} $f = \mathbf{x}^\mathbf{u} - \mathbf{x}^\mathbf{v} \in J$ is called \textbf{indispensable} of $J$ if every system of binomial generators of $J$ contains $f$ or $-f,$ while a \textbf{monomial} $\mathbf{x}^\mathbf{u}$ is called \textbf{indispensable} of $J$ if every system of binomial generators of $J$ contains a binomial $f$ such that $\mathbf{x}^\mathbf{u}$ is a monomial of $f.$
\end{definition}

In the following we will write $M_J$ for the monomial ideal generated by all $\mathbf{x}^\mathbf{u}$ for which there exists a nonzero $\mathbf{x}^\mathbf{u} - \mathbf{x}^\mathbf{v} \in J.$

The next proposition is the natural generalization of \cite[Proposition 3.1]{Charalambous07}, but for completeness, we give a proof.

\begin{proposition}\label{Prop IndMonJ}
The indispensable monomials of $J$ are precisely the minimal generators of $M_J.$
\end{proposition}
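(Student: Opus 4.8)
The plan is to prove both inclusions: every indispensable monomial is a minimal generator of $M_J$, and every minimal generator of $M_J$ is indispensable. For the first direction, suppose $\mathbf{x}^\mathbf{u}$ is a monomial of $M_J$ that is \emph{not} a minimal generator, so $\mathbf{x}^\mathbf{u} = \mathbf{x}^\mathbf{w} \cdot \mathbf{x}^{\mathbf{u}'}$ with $\mathbf{x}^{\mathbf{u}'}$ a minimal generator of $M_J$ and $\mathbf{x}^\mathbf{w} \neq 1$. Then there is a nonzero binomial $\mathbf{x}^{\mathbf{u}'} - \mathbf{x}^{\mathbf{v}'} \in J$; I would exhibit a system of binomial generators of $J$ in which $\mathbf{x}^\mathbf{u}$ appears in no binomial, showing $\mathbf{x}^\mathbf{u}$ is dispensable. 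The idea is: start from any binomial system, and whenever $\mathbf{x}^\mathbf{u}$ occurs in some generator $\mathbf{x}^\mathbf{u} - \mathbf{x}^\mathbf{z}$, replace it using the relation $\mathbf{x}^\mathbf{u} - \mathbf{x}^\mathbf{z} = \mathbf{x}^\mathbf{w}(\mathbf{x}^{\mathbf{u}'} - \mathbf{x}^{\mathbf{v}'}) + (\mathbf{x}^\mathbf{w}\mathbf{x}^{\mathbf{v}'} - \mathbf{x}^\mathbf{z})$, where $\mathbf{x}^\mathbf{w}\mathbf{x}^{\mathbf{v}'} - \mathbf{x}^\mathbf{z} \in J$ is $\mathcal{A}$-homogeneous of the same $\mathcal{A}$-degree; one must check this process terminates, e.g. by an induction on the number of generators containing $\mathbf{x}^\mathbf{u}$ together with a monomial-order argument ensuring the newly introduced monomials $\mathbf{x}^\mathbf{w}\mathbf{x}^{\mathbf{v}'}$ do not re-create the obstruction — this bookkeeping is the main technical point of this direction.

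For the converse, let $\mathbf{x}^\mathbf{u}$ be a minimal generator of $M_J$, so there is a nonzero $f_0 = \mathbf{x}^\mathbf{u} - \mathbf{x}^{\mathbf{v}_0} \in J$; note $f_0$ is $\mathcal{A}$-homogeneous since $J$ is. I would take an arbitrary system of binomial generators $\{g_1, \dots, g_s\}$ of $J$ and write $f_0 = \sum_i h_i g_i$ with $h_i \in \mathbbmss{k}[\mathbf{x}]$. Restricting to the $\mathcal{A}$-homogeneous component of $\mathcal{A}$-degree $\deg_\mathcal{A}(\mathbf{x}^\mathbf{u})$, I may assume each $h_i$ is a scalar times a monomial and each $g_i$ is $\mathcal{A}$-homogeneous. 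Since the monomial $\mathbf{x}^\mathbf{u}$ appears on the left, it must appear in some term $h_i g_i$ on the right, hence $\mathbf{x}^\mathbf{u}$ divides a monomial of some $g_i$; write that monomial as $\mathbf{x}^\mathbf{w} \mathbf{x}^\mathbf{u}$. The key claim is $\mathbf{x}^\mathbf{w} = 1$: the other monomial of that $g_i$, being $\mathcal{A}$-homogeneous with it, has the form $\mathbf{x}^\mathbf{w}\mathbf{x}^{\mathbf{v}}$ where $\mathbf{x}^\mathbf{u} - \mathbf{x}^\mathbf{v}$ (after dividing out $\mathbf{x}^\mathbf{w}$, using that $J$ is a pure difference binomial ideal and lattice considerations, or directly that $\mathbf{x}^\mathbf{w}\mathbf{x}^\mathbf{u} - \mathbf{x}^\mathbf{w}\mathbf{x}^{\mathbf{v}} \in J$ forces $\mathbf{x}^\mathbf{u} - \mathbf{x}^{\mathbf{v}} \in J$ via $(J : (x_1\cdots x_n)^\infty) = I_\mathcal{L}$ from Theorem \ref{Th AgradJ}) yields a nonzero binomial in $J$ with monomial $\mathbf{x}^\mathbf{u}$ dividing $\mathbf{x}^\mathbf{w}\mathbf{x}^\mathbf{u}$, contradicting minimality of $\mathbf{x}^\mathbf{u}$ in $M_J$ unless $\mathbf{x}^\mathbf{w}=1$. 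Hence $g_i = \pm(\mathbf{x}^\mathbf{u} - \mathbf{x}^{\mathbf{v}})$ has $\mathbf{x}^\mathbf{u}$ as one of its monomials, so every binomial system of $J$ contains a binomial with monomial $\mathbf{x}^\mathbf{u}$, i.e. $\mathbf{x}^\mathbf{u}$ is indispensable.

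The main obstacle I anticipate is making the first direction fully rigorous: the naive replacement procedure can in principle reintroduce the forbidden monomial in the second summand, so the termination argument needs care — I would handle it by working with a fixed term order, choosing among minimal generators of $M_J$ dividing $\mathbf{x}^\mathbf{u}$ appropriately, and inducting on a well-founded quantity (such as the multiset of exponents, or the number of generators in which a monomial divisible by the relevant minimal generator occurs). Since the statement is explicitly the natural generalization of \cite[Proposition 3.1]{Charalambous07}, I expect the same structural argument to transfer verbatim once $\mathcal{A}$-homogeneity of $J$ (guaranteed by hypothesis, equivalently $J \subseteq I_\mathcal{A}$ by Corollary \ref{Cor AgradJ}) is invoked to keep all manipulations within single $\mathcal{A}$-graded components.
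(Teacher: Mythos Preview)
Your overall strategy matches the paper's, but there is a concrete error in your ``minimal generator $\Rightarrow$ indispensable'' direction. From $f_0 = \sum_i h_i g_i$ with each $h_i$ a monomial, the fact that $\mathbf{x}^\mathbf{u}$ appears on the left means some monomial of some $g_i$ \emph{divides} $\mathbf{x}^\mathbf{u}$, not the other way around: if $h_i \cdot m = \mathbf{x}^\mathbf{u}$ for a monomial $m$ of $g_i$, then $m \mid \mathbf{x}^\mathbf{u}$. Your subsequent attempt to cancel a common factor $\mathbf{x}^\mathbf{w}$ using $(J:(x_1\cdots x_n)^\infty)=I_\mathcal{L}$ is therefore unnecessary, and in fact invalid: $J$ need not equal its saturation, so $\mathbf{x}^\mathbf{w}\mathbf{x}^\mathbf{u}-\mathbf{x}^\mathbf{w}\mathbf{x}^\mathbf{v}\in J$ does not force $\mathbf{x}^\mathbf{u}-\mathbf{x}^\mathbf{v}\in J$. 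Once the divisibility is corrected the argument is immediate, and this is exactly what the paper does: the monomials occurring in any binomial generating set of $J$ generate $M_J$, so a minimal generator $\mathbf{x}^\mathbf{u}$ of $M_J$ must literally be one of them.

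For the other direction your worry about termination is unfounded. In the replacement $\mathbf{x}^\mathbf{u}-\mathbf{x}^\mathbf{z}\ \rightsquigarrow\ \mathbf{x}^\mathbf{w}\mathbf{x}^{\mathbf{v}'}-\mathbf{x}^\mathbf{z}$, neither monomial of the new binomial equals $\mathbf{x}^\mathbf{u}$: indeed $\mathbf{x}^\mathbf{w}\mathbf{x}^{\mathbf{v}'}=\mathbf{x}^\mathbf{u}$ would force $\mathbf{v}'=\mathbf{u}'$, contradicting that $\mathbf{x}^{\mathbf{u}'}-\mathbf{x}^{\mathbf{v}'}$ is nonzero, and $\mathbf{x}^\mathbf{z}\neq\mathbf{x}^\mathbf{u}$ since the original generator was nonzero. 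Hence the number of generators containing $\mathbf{x}^\mathbf{u}$ strictly decreases at each step; no term order or well-founded induction is needed. The paper streamlines this further by proving the ``minimal $\Rightarrow$ indispensable'' direction first, so that the chosen proper divisor $\mathbf{x}^{\mathbf{u}'}$ (being a minimal generator of $M_J$) already appears as a monomial of some generator $f_k$ in the given system, avoiding the need to adjoin $\mathbf{x}^{\mathbf{u}'}-\mathbf{x}^{\mathbf{v}'}$ separately.
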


\begin{proof}
Let $\{f_1, \ldots, f_s\}$ be a system of binomial generators of $J.$ Clearly, the monomials of the $f_i,\ i = 1, \ldots, s,$ generate $M_J.$ Let $\mathbf{x}^\mathbf{u}$ be a minimal generator of $M_J.$ Then $\mathbf{x}^\mathbf{u} - \mathbf{x}^\mathbf{v} \in J,$ for some nonzero $\mathbf{v} \in \mathbb{N}^n.$ Now, the minimality of $\mathbf{x}^\mathbf{u}$ assures that $\mathbf{x}^\mathbf{u}$ is a monomial of $f_j$ for some $j.$ Therefore every minimal generator of $M_J$ is an indispensable monomial of $J.$ Conversely, let $\mathbf{x}^\mathbf{u}$ be an indispensable monomial of $J.$ If $\mathbf{x}^\mathbf{u}$ is not a minimal generator of $M_J,$ then there is a minimal generator $\mathbf{x}^\mathbf{w}$ of $M_J$ such that $\mathbf{x}^\mathbf{u} = \mathbf{x}^{\mathbf{w}} \mathbf{x}^{\mathbf{u}'}$ with $\mathbf{u}' \neq \mathbf{0}.$ By the previous argument $\mathbf{x}^\mathbf{w}$ is an indispensable monomial of $J,$ hence without loss of generality we may suppose that $f_k = \mathbf{x}^\mathbf{w} - \mathbf{x}^\mathbf{z}$ for some $k$ and $\mathbf z \in \mathbb{N}^n$. Thus, if $f_j = \mathbf{x}^\mathbf{u} - \mathbf{x}^\mathbf{v},$ then $$f'_j = \mathbf{x}^{\mathbf{u}'} \mathbf{x}^\mathbf{z} - \mathbf{x}^\mathbf{v} =  f_j - \mathbf{x}^{\mathbf{u}'} f_k \in J$$ and therefore we can replace $f_j$ by $f'_j$ in $\{f_1, \ldots, f_s\}.$ By repeating this argument as many times as necessary, we will find a system of binomial generators of $J$ such that no element has $\mathbf{x}^\mathbf{u}$ as monomial, a contradiction to the fact that $\mathbf{x}^\mathbf{u}$ is indispensable.
\end{proof}

\begin{corollary}\label{Cor CombIndMinJ}
If $\mathbf{x}^\mathbf{u} \in M_J$ is an indispensable monomial of $I_\mathcal{A},$ then it is also an indispensable monomial of $J.$
\end{corollary}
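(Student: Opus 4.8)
The plan is to reduce everything to Proposition \ref{Prop IndMonJ} together with the inclusion $J \subseteq I_\mathcal{A}$. First I would recall that, since $J$ is by hypothesis an $\mathcal{A}$-homogeneous pure difference binomial ideal, Corollary \ref{Cor AgradJ} yields $J \subseteq I_\mathcal{A}$. From this I would deduce the corresponding inclusion of monomial ideals $M_J \subseteq M_{I_\mathcal{A}}$: if $\mathbf{x}^\mathbf{u}$ is one of the monomials generating $M_J$, that is, if there is a nonzero binomial $\mathbf{x}^\mathbf{u} - \mathbf{x}^\mathbf{v} \in J$, then the same binomial lies in $I_\mathcal{A}$ and is still nonzero, so $\mathbf{x}^\mathbf{u}$ generates $M_{I_\mathcal{A}}$ as well; hence every generator of $M_J$ belongs to $M_{I_\mathcal{A}}$.

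Next I would apply Proposition \ref{Prop IndMonJ} to both ideals: the indispensable monomials of $J$ are precisely the minimal generators of $M_J$, and the indispensable monomials of $I_\mathcal{A}$ are precisely the minimal generators of $M_{I_\mathcal{A}}$. Thus the statement to be proved reduces to the following: a minimal generator of $M_{I_\mathcal{A}}$ that happens to lie in $M_J$ is a minimal generator of $M_J$. Arguing by contradiction, if $\mathbf{x}^\mathbf{u} \in M_J$ were a minimal generator of $M_{I_\mathcal{A}}$ but not of $M_J$, there would exist a monomial $\mathbf{x}^\mathbf{w} \in M_J$ properly dividing $\mathbf{x}^\mathbf{u}$; but then $\mathbf{x}^\mathbf{w} \in M_J \subseteq M_{I_\mathcal{A}}$ would contradict the minimality of $\mathbf{x}^\mathbf{u}$ in $M_{I_\mathcal{A}}$. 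Therefore $\mathbf{x}^\mathbf{u}$ is already a minimal generator of $M_J$, i.e. an indispensable monomial of $J$.

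I do not expect a genuine obstacle here. The only non-formal ingredient is the passage $J \subseteq I_\mathcal{A} \Rightarrow M_J \subseteq M_{I_\mathcal{A}}$, and once the two monomial ideals are comparable the conclusion is the elementary observation that a minimal generator of the larger one lying in the smaller one must be minimal in the smaller one too. If any point deserves a word of care, it is merely checking that the binomial witnessing $\mathbf{x}^\mathbf{u} \in M_J$ stays nonzero in $I_\mathcal{A}$; this is automatic, since being nonzero just means the two monomials are distinct, a condition that does not depend on the ambient ideal.
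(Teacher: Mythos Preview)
Your proof is correct and follows essentially the same approach as the paper: both rest on the inclusion $M_J \subseteq M_{I_\mathcal{A}}$ (coming from Corollary~\ref{Cor AgradJ}) together with Proposition~\ref{Prop IndMonJ}. The paper's proof is simply the one-line version of what you have written out in full.
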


\begin{proof}
It suffices to note that $M_J \subseteq M_{I_\mathcal{A}}$ by Corollary \ref{Cor AgradJ}.
\end{proof}

Now, we will give a combinatorial necessary and sufficient condition for a monomial $\mathbf{x}^\mathbf{u} \in \mathbbmss{k}[\mathbf{x}]$ to be indispensable of $J.$

\begin{definition}
For every $\mathbf{b} \in \mathbb{N}\mathcal{A}$ we define the graph $G_\mathbf{b}(J)$ whose vertices are the monomials of $M_J$ of $\mathcal{A}-$degree $\mathbf{b}$ and two vertices $\mathbf{x}^\mathbf{u}$ and $\mathbf{x}^\mathbf{v}$ are joined by an edge if
\begin{itemize}
\item[(a)] $\mathrm{gcd}(\mathbf{x}^\mathbf{u}, \mathbf{x}^\mathbf{v}) \neq 1;$
\item[(b)] there exists a monomial $1 \neq \mathbf{x}^\mathbf{w}$ dividing $\mathrm{gcd}(\mathbf{x}^\mathbf{u}, \mathbf{x}^\mathbf{v})$ such that the binomial $\mathbf{x}^{\mathbf{u}-\mathbf{w}} - \mathbf{x}^{\mathbf{v}-\mathbf{w}}$ belongs to $J.$
\end{itemize}
\end{definition}

Notice that $G_\mathbf{b}(J) = \varnothing$ exactly when $M_J$ has no element of  $\mathcal{A}-$degree $\mathbf{b};$ in particular, $G_\mathbf{b}(J) = \varnothing$ if  $\mathbf{b} = \mathbf{0},$ because $1 \not\in M_J$ (otherwise, $\mathbbmss{k}[\mathbf{x}]/I_\mathcal{A}$ would contain invertible elements). Moreover, since $J \subseteq I_\mathcal{A},$ we have that $G_\mathbf{b}(J)$ is a subgraph of $G_\mathbf{b}(I_\mathcal{A}),$ for all $\mathbf{b}.$ Finally, we observe that condition (b) is trivially fulfilled for $J = I_\mathcal{A}$ because $\big(  I_\mathcal{A} : (x_1 \cdots x_n)^\infty \big) = I_\mathcal{A},$ in this case, if $G_\mathbf{b}(J) \neq \varnothing,$ the graph $G_\mathbf{b}(J)$ is nothing but the $1-$skeleton of the simplicial complex $\nabla_\mathbf{b}$ appearing in \cite{OjVi2}. Thus, we have the following result.

\begin{theorem}\label{Th OjVi1}
Let $\mathbf{x}^\mathbf{u} - \mathbf{x}^\mathbf{v} \in I_\mathcal{A}$ be a binomial of $\mathcal{A}-$degree $\mathbf{b}.$ Then, $f$ is a minimal generator of $I_\mathcal{A}$ if and only if $\mathbf{x}^\mathbf{u}$ and $\mathbf{x}^\mathbf{v}$ lie in two different connected components of $G_\mathbf{b}(I_\mathcal{A}),$ in particular, the graph is disconnected.
\end{theorem}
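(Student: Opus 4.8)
The plan is to translate the statement into a linear-algebra question about the $\mathcal{A}$-graded component $(I_\mathcal{A})_\mathbf{b}$ modulo $\mathfrak{m}I_\mathcal{A}$, where $\mathfrak{m}=\langle x_1,\ldots,x_n\rangle$, and then to settle it with the classical description of the edge space of a graph. Write $f=\mathbf{x}^\mathbf{u}-\mathbf{x}^\mathbf{v}$. First I would recall, via the graded Nakayama lemma, that a homogeneous element of $I_\mathcal{A}$ of $\mathcal{A}$-degree $\mathbf{b}$ belongs to some minimal system of binomial generators of $I_\mathcal{A}$ --- that is, is a minimal generator --- exactly when $f\notin(\mathfrak{m}I_\mathcal{A})_\mathbf{b}$; this is consistent with the equality $\dim_\mathbbmss{k}(I_\mathcal{A}/\mathfrak{m}I_\mathcal{A})_\mathbf{b}=\dim_\mathbbmss{k}\mathrm{Tor}_1^{\mathbbmss{k}[\mathbf{x}]}(\mathbbmss{k},\mathbbmss{k}[\mathcal{A}])_\mathbf{b}$ recalled above. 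Thus the theorem reduces to the equivalence
\[
\mathbf{x}^\mathbf{u}-\mathbf{x}^\mathbf{v}\in(\mathfrak{m}I_\mathcal{A})_\mathbf{b}\quad\Longleftrightarrow\quad \mathbf{x}^\mathbf{u}\ \text{and}\ \mathbf{x}^\mathbf{v}\ \text{lie in the same connected component of}\ G_\mathbf{b}(I_\mathcal{A}).
\]

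Next I would describe both sides combinatorially. Let $V_\mathbf{b}$ be the finite set of monomials of $\mathcal{A}$-degree $\mathbf{b}$; since $f\neq 0$ it contains $\mathbf{x}^\mathbf{u}$ and $\mathbf{x}^\mathbf{v}$, so $|V_\mathbf{b}|\geq 2$ and $V_\mathbf{b}$ is precisely the vertex set of $G_\mathbf{b}(I_\mathcal{A})$. Regard $(I_\mathcal{A})_\mathbf{b}$ as a subspace of the $\mathbbmss{k}$-vector space $\mathbbmss{k}^{V_\mathbf{b}}$ with basis $V_\mathbf{b}$; since $\mathbbmss{k}[\mathcal{A}]_\mathbf{b}$ is one-dimensional, $(I_\mathcal{A})_\mathbf{b}$ is the hyperplane of those vectors whose coordinates sum to $0$. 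On the other hand $(\mathfrak{m}I_\mathcal{A})_\mathbf{b}=\sum_{i=1}^{n}x_i\,(I_\mathcal{A})_{\mathbf{b}-\mathbf{a}_i}$, and multiplying the binomials $\mathbf{x}^\mathbf{p}-\mathbf{x}^\mathbf{q}$ that span $(I_\mathcal{A})_{\mathbf{b}-\mathbf{a}_i}$ by $x_i$ shows that $(\mathfrak{m}I_\mathcal{A})_\mathbf{b}$ is spanned by the differences $\mathbf{x}^\mathbf{p}-\mathbf{x}^\mathbf{q}$ with $\mathbf{x}^\mathbf{p},\mathbf{x}^\mathbf{q}\in V_\mathbf{b}$ and $\mathrm{gcd}(\mathbf{x}^\mathbf{p},\mathbf{x}^\mathbf{q})\neq 1$. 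Since condition (b) in the definition of $G_\mathbf{b}(J)$ is automatic for $J=I_\mathcal{A}$ (take $\mathbf{x}^\mathbf{w}$ to be a variable dividing $\mathrm{gcd}(\mathbf{x}^\mathbf{p},\mathbf{x}^\mathbf{q})$), these are precisely the edge vectors $\mathbf{x}^\mathbf{p}-\mathbf{x}^\mathbf{q}$ over the edges $\{\mathbf{x}^\mathbf{p},\mathbf{x}^\mathbf{q}\}$ of $G_\mathbf{b}(I_\mathcal{A})$; so $(\mathfrak{m}I_\mathcal{A})_\mathbf{b}$ is the edge space of $G_\mathbf{b}(I_\mathcal{A})$ inside $\mathbbmss{k}^{V_\mathbf{b}}$.

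Finally I would invoke the elementary fact that, for any graph on a vertex set $V$, the subspace of $\mathbbmss{k}^{V}$ spanned by the edge vectors consists exactly of those vectors whose coordinates sum to $0$ on each connected component: the inclusion ``$\subseteq$'' is immediate, and ``$\supseteq$'' follows because the difference of two vertices lying in a common component is a telescoping sum of edge vectors along a path joining them, after which one treats each component separately. Applied to $G_\mathbf{b}(I_\mathcal{A})$, this shows that $\mathbf{x}^\mathbf{u}-\mathbf{x}^\mathbf{v}$, whose coordinates are $1$ at $\mathbf{x}^\mathbf{u}$, $-1$ at $\mathbf{x}^\mathbf{v}$ and $0$ elsewhere, lies in $(\mathfrak{m}I_\mathcal{A})_\mathbf{b}$ if and only if $\mathbf{x}^\mathbf{u}$ and $\mathbf{x}^\mathbf{v}$ lie in the same connected component; this is the required equivalence. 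In particular, if $f$ is a minimal generator then $\mathbf{x}^\mathbf{u}$ and $\mathbf{x}^\mathbf{v}$ lie in different components, so $G_\mathbf{b}(I_\mathcal{A})$ is disconnected.

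I do not anticipate a real obstacle: the one point needing care is the identification of $(\mathfrak{m}I_\mathcal{A})_\mathbf{b}$ with the edge space of $G_\mathbf{b}(I_\mathcal{A})$, which uses only that $I_\mathcal{A}$ is generated by $\mathcal{A}$-homogeneous binomials and that condition (b) is vacuous for $I_\mathcal{A}$; the remaining inputs are the graded Nakayama lemma and the standard description of the edge space of a graph. Alternatively, one may simply appeal to \cite{OjVi2}, where $G_\mathbf{b}(I_\mathcal{A})$ is nothing but the $1$-skeleton of the simplicial complex $\nabla_\mathbf{b}$ and this result is proved.
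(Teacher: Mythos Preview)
Your argument is correct. The paper itself does not give a proof here at all: its entire ``proof'' is the sentence ``For a proof see, for example, \cite[Section 2]{OjVi}.'' You instead supply a self-contained argument via the graded Nakayama lemma, identifying $(\mathfrak{m}I_\mathcal{A})_\mathbf{b}$ with the edge space of $G_\mathbf{b}(I_\mathcal{A})$ inside $\mathbbmss{k}^{V_\mathbf{b}}$ and then invoking the standard fact that the edge space equals the space of vectors summing to zero on each connected component. Each step checks out: the identification of $(\mathfrak{m}I_\mathcal{A})_\mathbf{b}$ with the span of the edge vectors is exactly right (both inclusions are the one-line arguments you indicate), condition (b) is indeed vacuous for $I_\mathcal{A}$, and the edge-space description of connectivity holds over any field since $H_0$ of a graph is free. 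Your closing remark that one could alternatively just cite \cite{OjVi2} is in fact what the paper does (with \cite{OjVi} rather than \cite{OjVi2}), so your write-up strictly subsumes the paper's treatment.

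One small caveat worth flagging: the paper's stated definition of ``minimal generator'' (any $f$ of a degree $\mathbf{b}$ with $\mathrm{Tor}_1^R(\mathbbmss{k},\mathbbmss{k}[\mathcal{A}])_\mathbf{b}\neq 0$) is, read literally, looser than the standard one you use ($f\notin\mathfrak{m}I_\mathcal{A}$). The theorem as stated is only true under the standard reading, and the paper's subsequent uses of the theorem (e.g.\ in Lemma~\ref{Lemma Critical3}) confirm that the standard meaning is intended; your interpretation is the right one.
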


\begin{proof}
For a proof see, for example, \cite[Section 2]{OjVi}.
\end{proof}

The next theorem provides a necessary and sufficient condition for a monomial to be indispensable of $J$.

\begin{theorem}\label{Thm CombIndMonJ}
A monomial $\mathbf{x}^\mathbf{u}$ is indispensable of $J$ if and only if $\{\mathbf{x}^\mathbf{u}\}$ is connected component of $G_\mathbf{b}(J)$, where $\mathbf{b} = \deg_\mathcal{A}(\mathbf{x}^\mathbf{u}).$
\end{theorem}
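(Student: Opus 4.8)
The plan is to connect the combinatorial characterization in Theorem \ref{Thm CombIndMonJ} with Proposition \ref{Prop IndMonJ}, which already tells us that the indispensable monomials of $J$ are exactly the minimal generators of $M_J$. So the real task is to show: $\mathbf{x}^\mathbf{u}$ is a minimal generator of $M_J$ if and only if $\{\mathbf{x}^\mathbf{u}\}$ is a connected component of $G_\mathbf{b}(J)$, where $\mathbf{b} = \deg_\mathcal{A}(\mathbf{x}^\mathbf{u})$. Both directions will be contrapositive arguments manipulating monomials of a fixed $\mathcal{A}$-degree $\mathbf{b}$.

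First I would prove that if $\mathbf{x}^\mathbf{u}$ is \emph{not} isolated in $G_\mathbf{b}(J)$, then it is not a minimal generator of $M_J$. Suppose $\mathbf{x}^\mathbf{u}$ is joined to some $\mathbf{x}^\mathbf{v}$ by an edge: there is $1 \neq \mathbf{x}^\mathbf{w}$ dividing $\gcd(\mathbf{x}^\mathbf{u}, \mathbf{x}^\mathbf{v})$ with $\mathbf{x}^{\mathbf{u}-\mathbf{w}} - \mathbf{x}^{\mathbf{v}-\mathbf{w}} \in J$. Since this binomial is nonzero (as $\mathbf{x}^\mathbf{u} \neq \mathbf{x}^\mathbf{v}$, both being distinct vertices) and lies in $J$, the monomial $\mathbf{x}^{\mathbf{u}-\mathbf{w}}$ belongs to $M_J$. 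But $\mathbf{x}^{\mathbf{u}-\mathbf{w}}$ strictly divides $\mathbf{x}^\mathbf{u}$ because $\mathbf{w} \neq \mathbf{0}$, so $\mathbf{x}^\mathbf{u}$ is not a minimal generator of $M_J$. (A small point to check: we need $\mathbf{x}^\mathbf{u} - \mathbf{x}^\mathbf{v} \in J$ — equivalently that $\mathbf{x}^{\mathbf{u}-\mathbf{w}} - \mathbf{x}^{\mathbf{v}-\mathbf{w}}\in J$ forces $\mathbf{x}^\mathbf{u}$ to be a vertex with a nonzero witnessing binomial — but that is exactly how vertices of $G_\mathbf{b}(J)$ are defined, so no extra work is needed.)

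Conversely, suppose $\mathbf{x}^\mathbf{u}$ is not a minimal generator of $M_J$; I must exhibit an edge of $G_\mathbf{b}(J)$ at $\mathbf{x}^\mathbf{u}$. There is a minimal generator $\mathbf{x}^{\mathbf{u}'}$ of $M_J$ with $\mathbf{x}^\mathbf{u} = \mathbf{x}^{\mathbf{u}'}\mathbf{x}^{\mathbf{w}}$, $\mathbf{w} \neq \mathbf{0}$; since $\mathbf{x}^{\mathbf{u}'} \in M_J$, there is a nonzero $\mathbf{x}^{\mathbf{u}'} - \mathbf{x}^{\mathbf{v}'} \in J$. Set $\mathbf{v} := \mathbf{v}' + \mathbf{w}$, so that $\mathbf{x}^\mathbf{u} - \mathbf{x}^\mathbf{v} = \mathbf{x}^\mathbf{w}(\mathbf{x}^{\mathbf{u}'} - \mathbf{x}^{\mathbf{v}'}) \in J$ is nonzero and $\mathcal{A}$-homogeneous of degree $\mathbf{b}$. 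Hence $\mathbf{x}^\mathbf{v} \in M_J$ is a vertex of $G_\mathbf{b}(J)$, distinct from $\mathbf{x}^\mathbf{u}$; moreover $\mathbf{x}^\mathbf{w}$ divides $\gcd(\mathbf{x}^\mathbf{u}, \mathbf{x}^\mathbf{v})$ (it divides both by construction), $\mathbf{x}^\mathbf{w} \neq 1$, and $\mathbf{x}^{\mathbf{u}-\mathbf{w}} - \mathbf{x}^{\mathbf{v}-\mathbf{w}} = \mathbf{x}^{\mathbf{u}'} - \mathbf{x}^{\mathbf{v}'} \in J$. So conditions (a) and (b) hold and $\mathbf{x}^\mathbf{u}$ is joined to $\mathbf{x}^\mathbf{v}$; thus $\{\mathbf{x}^\mathbf{u}\}$ is not a connected component.

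I do not expect a serious obstacle here; the argument is essentially a translation. The one place to be careful is keeping track of which monomials are genuinely in $M_J$ (i.e. that the witnessing binomials are nonzero) and that the gcd-divisibility of $\mathbf{x}^\mathbf{w}$ is genuine — in the converse direction one must note that $\mathbf{x}^\mathbf{w}$ divides $\mathbf{x}^\mathbf{v} = \mathbf{x}^{\mathbf{v}'+\mathbf{w}}$, which is immediate. Combining the two directions with Proposition \ref{Prop IndMonJ} finishes the proof.
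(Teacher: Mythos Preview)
Your proposal is correct and follows essentially the same approach as the paper: both directions go through Proposition~\ref{Prop IndMonJ} and argue by contraposition, producing in one direction a proper divisor of $\mathbf{x}^\mathbf{u}$ in $M_J$ from an edge, and in the other direction an edge from a proper divisor witnessing non-minimality. The only difference is cosmetic (the paper swaps the roles of your $\mathbf{x}^{\mathbf{u}'}$ and $\mathbf{x}^{\mathbf{w}}$ in the second direction).
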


\begin{proof}
Suppose that $\mathbf{x}^\mathbf{u}$ is an indispensable monomial of $J$ and $\{\mathbf{x}^\mathbf{u}\}$ is not a connected component of $G_\mathbf{b}(J).$ Then, there exists $\mathbf{x}^\mathbf{v} \in M_J$ with $\mathcal{A}-$degree equal to $\mathbf{b}$ such that $\mathrm{gcd}(\mathbf{x}^\mathbf{u}, \mathbf{x}^\mathbf{v}) \neq 1$ and  $\mathbf{x}^{\mathbf{u}-\mathbf{w}} - \mathbf{x}^{\mathbf{v}-\mathbf{w}} \in J$, where $1 \neq \mathbf{x}^\mathbf{w}$ divides $\mathrm{gcd}(\mathbf{x}^\mathbf{u}, \mathbf{x}^\mathbf{v})$. So $\mathbf{x}^{\mathbf{u}-\mathbf{w}} \in M_J$ and properly divides $\mathbf{x}^\mathbf{u},$ a contradiction to the fact that $\mathbf{x}^\mathbf{u}$ is a minimal generator of $M_J$ (see Proposition  \ref{Prop IndMonJ}). Conversely, we assume that $\{\mathbf{x}^\mathbf{u}\}$ is connected component of $G_\mathbf{b}(J)$ with $\mathbf{b} = \deg_\mathcal{A}(\mathbf{x}^\mathbf{u})$ and that $\mathbf{x}^\mathbf{u}$ is not an indispensable monomial of $J.$ Then, by Proposition \ref{Prop IndMonJ}, there exists a binomial $f = \mathbf{x}^\mathbf{w} - \mathbf{x}^\mathbf{z} \in J,$ such that $\mathbf{x}^\mathbf{w}$ properly divides $\mathbf{x}^\mathbf{u}.$ Let $\mathbf{x}^\mathbf{u} = \mathbf{x}^{\mathbf{w}} \mathbf{x}^{\mathbf{u}'},$ then $1 \neq \mathbf{x}^{\mathbf{u}'}$ divides $\mathrm{gcd}( \mathbf{x}^\mathbf{u}, \mathbf{x}^{\mathbf{u}'} \mathbf{x}^{\mathbf{z}})$ and hence $(\mathbf{x}^\mathbf{u} - \mathbf{x}^{\mathbf{u}'} \mathbf{x}^{\mathbf{z}})/(\mathbf{x}^{\mathbf{u}'})  = f \in J.$ Thus, $\{ \mathbf{x}^\mathbf{u}, \mathbf{x}^{\mathbf{u}'} \mathbf{x}^{\mathbf{z}} \}$ is an edge of $G_\mathbf{b}(J),$ a contradiction to the fact that $\{ \mathbf{x}^\mathbf{u} \}$ is a connected component of $G_\mathbf{b}(J).$
\end{proof}

Now, we are able to give a sufficient condition for a binomial to be indispensable of $J$ by using our graphs $G_\mathbf{b}(J)$ (compare with \cite[Corollary 5]{GOj10}).

\begin{theorem}\label{Thm CombIndBinJ}
Given $\mathbf{x}^\mathbf{u} - \mathbf{x}^\mathbf{v} \in J$ and let $\mathbf{b}  = \deg_\mathcal{A}(\mathbf{x}^\mathbf{u}) \, \big(= \deg_\mathcal{A}(\mathbf{x}^\mathbf{v}) \big).$ If $G_\mathbf{b}(J) = \big\{ \{\mathbf{x}^\mathbf{u}\}, \{\mathbf{x}^\mathbf{v}\} \big\},$ then $\mathbf{x}^\mathbf{u} - \mathbf{x}^\mathbf{v}$ is an indispensable binomial of $J.$
\end{theorem}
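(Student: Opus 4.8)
The plan is to reduce the statement to Theorem~\ref{Thm CombIndMonJ} together with a short direct argument on generating sets. Since $G_\mathbf{b}(J) = \big\{ \{\mathbf{x}^\mathbf{u}\}, \{\mathbf{x}^\mathbf{v}\}\big\}$, each of the singletons $\{\mathbf{x}^\mathbf{u}\}$ and $\{\mathbf{x}^\mathbf{v}\}$ is a connected component of $G_\mathbf{b}(J)$; hence, by Theorem~\ref{Thm CombIndMonJ}, both $\mathbf{x}^\mathbf{u}$ and $\mathbf{x}^\mathbf{v}$ are indispensable monomials of $J$, equivalently minimal generators of $M_J$ by Proposition~\ref{Prop IndMonJ}. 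In particular, any system $\{f_1,\ldots,f_s\}$ of binomial generators of $J$ must contain some $f_i$ having $\mathbf{x}^\mathbf{u}$ as one of its two monomials. The task is then to show that such an $f_i$ can only be $\mathbf{x}^\mathbf{u}-\mathbf{x}^\mathbf{v}$ or $\mathbf{x}^\mathbf{v}-\mathbf{x}^\mathbf{u}$.

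To that end, write $f_i = \pm(\mathbf{x}^\mathbf{u} - \mathbf{x}^\mathbf{w})$. Because $J$ is $\mathcal{A}$-homogeneous we have $J \subseteq I_\mathcal{A}$ by Corollary~\ref{Cor AgradJ}, and every binomial of $I_\mathcal{A}$ is $\mathcal{A}$-homogeneous by \cite[Lemma 4.1]{Sturmfels95}; therefore $\deg_\mathcal{A}(\mathbf{x}^\mathbf{w}) = \deg_\mathcal{A}(\mathbf{x}^\mathbf{u}) = \mathbf{b}$. Moreover $f_i \neq 0$ forces $\mathbf{w} \neq \mathbf{u}$, so $\mathbf{x}^\mathbf{u} - \mathbf{x}^\mathbf{w}$ is a nonzero element of $J$ and hence $\mathbf{x}^\mathbf{w} \in M_J$. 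Thus $\mathbf{x}^\mathbf{w}$ is a vertex of $G_\mathbf{b}(J)$, so $\mathbf{x}^\mathbf{w} \in \{\mathbf{x}^\mathbf{u}, \mathbf{x}^\mathbf{v}\}$, and since $\mathbf{x}^\mathbf{w} \neq \mathbf{x}^\mathbf{u}$ we conclude $\mathbf{x}^\mathbf{w} = \mathbf{x}^\mathbf{v}$. Therefore $f_i = \pm(\mathbf{x}^\mathbf{u} - \mathbf{x}^\mathbf{v})$, which is precisely the assertion that $\mathbf{x}^\mathbf{u}-\mathbf{x}^\mathbf{v}$ is an indispensable binomial of $J$.

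I do not expect a genuine obstacle here; the only point that requires care is the bookkeeping guaranteeing that the companion monomial $\mathbf{x}^\mathbf{w}$ of $f_i$ really is a vertex of $G_\mathbf{b}(J)$, i.e.\ that it lies in $M_J$ and carries $\mathcal{A}$-degree $\mathbf{b}$. This is exactly where the $\mathcal{A}$-homogeneity of $J$ (via $J\subseteq I_\mathcal{A}$) and the nonvanishing of the generators $f_i$ enter. One could alternatively phrase the argument by saying that the indispensability of \emph{both} monomials of $f$ already pins down, degree by degree, the unique binomial in any generating set supported on them; but the vertex-membership observation makes this immediate and avoids any appeal to the replacement procedure used in the proof of Proposition~\ref{Prop IndMonJ}.
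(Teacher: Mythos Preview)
Your proof is correct and follows essentially the same route as the paper's: use Theorem~\ref{Thm CombIndMonJ} to see that $\mathbf{x}^\mathbf{u}$ (and $\mathbf{x}^\mathbf{v}$) is indispensable, pick a generator $f_i$ containing $\mathbf{x}^\mathbf{u}$, and argue that its other monomial $\mathbf{x}^\mathbf{w}$ must be a vertex of $G_\mathbf{b}(J)$ and hence equal to $\mathbf{x}^\mathbf{v}$. You have simply made explicit the two ingredients (that $\mathbf{x}^\mathbf{w}\in M_J$ and that $\deg_\mathcal{A}(\mathbf{x}^\mathbf{w})=\mathbf{b}$) which the paper's proof passes over in one line.
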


\begin{proof}
Assume that $G_\mathbf{b}(J) = \big\{ \{\mathbf{x}^\mathbf{u}\}, \{\mathbf{x}^\mathbf{v}\} \big\}.$ Then, by Theorem \ref{Thm CombIndMonJ}, both $\mathbf{x}^\mathbf{u}$ and $\mathbf{x}^\mathbf{v}$ are indispensable monomials of $J.$ Let $\{f_1, \ldots, f_s\}$ be a system of binomial generators of $J.$ Since $\mathbf{x}^\mathbf{u}$ is an indispensable monomial, $f_i = \mathbf{x}^\mathbf{u} - \mathbf{x}^\mathbf{w} \neq 0,$ for some $i.$ Thus $\deg_\mathcal{A}(\mathbf{x}^\mathbf{u}) = \deg_\mathcal{A}(\mathbf{x}^\mathbf{w})$ and therefore $\mathbf{x}^\mathbf{w}$ is a vertex of $G_\mathbf{b}(J).$ Consequently, $\mathbf{w} = \mathbf{v}$ and we conclude that $\mathbf{x}^\mathbf{u} - \mathbf{x}^\mathbf{v}$ is an indispensable binomial of $J.$
\end{proof}

The converse of the above proposition is not true in general: consider for instance the ideal $J = \langle x-y, y^2 - yt,  z-t \rangle = \langle x - t, y - t, z - t \rangle \cap \langle x, y, z-t \rangle,$ then $J$ is $\mathcal{A}$-homogeneous for $\mathcal{A} = \{1, 1, 1, 1 \}.$ Both $x-y$ and $z-t$ are indispensable binomials of $J,$ while $G_\mathbf{1}(J) = \big\{ \{x\}, \{y\}, \{z\}, \{t\} \big\}.$

\begin{corollary}\label{Cor CombIndBinJ2}
If $f = \mathbf{x}^\mathbf{u} - \mathbf{x}^\mathbf{v} \in J$ is an indispensable binomial of $I_\mathcal{A},$ then $f$ is an indispensable binomial of $J.$
\end{corollary}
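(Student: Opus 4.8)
The plan is to reduce to Theorem~\ref{Thm CombIndBinJ}. Write $\mathbf{b} = \deg_\mathcal{A}(\mathbf{x}^\mathbf{u}) = \deg_\mathcal{A}(\mathbf{x}^\mathbf{v})$ and note that $\mathbf{u} \neq \mathbf{v}$, since an indispensable binomial is nonzero. By Theorem~\ref{Thm CombIndBinJ} it suffices to prove $G_\mathbf{b}(J) = \big\{ \{\mathbf{x}^\mathbf{u}\}, \{\mathbf{x}^\mathbf{v}\} \big\}$, and I would obtain this from the corresponding fact for the toric ideal, namely $G_\mathbf{b}(I_\mathcal{A}) = \big\{ \{\mathbf{x}^\mathbf{u}\}, \{\mathbf{x}^\mathbf{v}\} \big\}$, together with a transfer step.

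The transfer step is short. Because $0 \neq f \in J$, both $\mathbf{x}^\mathbf{u}$ and $\mathbf{x}^\mathbf{v}$ lie in $M_J$, so they are vertices of $G_\mathbf{b}(J)$; conversely $M_J \subseteq M_{I_\mathcal{A}}$ by Corollary~\ref{Cor AgradJ}, so every vertex of $G_\mathbf{b}(J)$ is a vertex of $G_\mathbf{b}(I_\mathcal{A})$ and hence equals $\mathbf{x}^\mathbf{u}$ or $\mathbf{x}^\mathbf{v}$. Thus $G_\mathbf{b}(J)$ has exactly the two vertices $\mathbf{x}^\mathbf{u}, \mathbf{x}^\mathbf{v}$; since it is a subgraph of $G_\mathbf{b}(I_\mathcal{A})$, which has no edge, it has no edge either, so $G_\mathbf{b}(J) = \big\{ \{\mathbf{x}^\mathbf{u}\}, \{\mathbf{x}^\mathbf{v}\} \big\}$ and Theorem~\ref{Thm CombIndBinJ} applies.

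It remains to prove $G_\mathbf{b}(I_\mathcal{A}) = \big\{ \{\mathbf{x}^\mathbf{u}\}, \{\mathbf{x}^\mathbf{v}\} \big\}$. This is the known description of the indispensable binomials of a toric ideal (see~\cite{Charalambous07}), but it can also be derived from the results above. Since $f$ belongs to every binomial generating set of $I_\mathcal{A}$, it belongs to a minimal one, hence $f$ is a minimal generator of $I_\mathcal{A}$, and Theorem~\ref{Th OjVi1} puts $\mathbf{x}^\mathbf{u}$ and $\mathbf{x}^\mathbf{v}$ in different connected components of $G_\mathbf{b}(I_\mathcal{A})$. The point that needs work is to exclude a third vertex: if $\mathbf{x}^\mathbf{w} \in M_{I_\mathcal{A}}$ has $\mathcal{A}$-degree $\mathbf{b}$ with $\mathbf{w} \notin \{\mathbf{u}, \mathbf{v}\}$, pick a minimal binomial generating set $\mathcal{G}$ of $I_\mathcal{A}$, which up to sign contains $f$ but not $-f$; since $\mathbf{x}^\mathbf{u} - \mathbf{x}^\mathbf{w}$ and $\mathbf{x}^\mathbf{w} - \mathbf{x}^\mathbf{v}$ lie in $I_\mathcal{A}$ and sum to $f$, the set $\big( \mathcal{G} \setminus \{f\} \big) \cup \{ \mathbf{x}^\mathbf{u} - \mathbf{x}^\mathbf{w},\, \mathbf{x}^\mathbf{w} - \mathbf{x}^\mathbf{v} \}$ is again a binomial generating set of $I_\mathcal{A}$ containing neither $f$ nor $-f$ (here $\mathbf{u} \neq \mathbf{v}$ ensures $\mathbf{x}^\mathbf{u} - \mathbf{x}^\mathbf{w}$ and $\mathbf{x}^\mathbf{w} - \mathbf{x}^\mathbf{v}$ differ from $\pm f$), contradicting the indispensability of $f$ in $I_\mathcal{A}$. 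Hence $G_\mathbf{b}(I_\mathcal{A})$ has only the vertices $\mathbf{x}^\mathbf{u}, \mathbf{x}^\mathbf{v}$, already known to lie in distinct components, which is the claim.

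The main obstacle is precisely this last step, ruling out any monomial of $M_{I_\mathcal{A}}$ of $\mathcal{A}$-degree $\mathbf{b}$ besides $\mathbf{x}^\mathbf{u}$ and $\mathbf{x}^\mathbf{v}$; everything else is bookkeeping with the graphs $G_\mathbf{b}$ and with Theorems~\ref{Th OjVi1} and~\ref{Thm CombIndBinJ}. A minor point worth keeping in mind is the standard fact, implicit throughout the paper, that a toric ideal has a minimal generating set consisting of binomials, which is what legitimizes calling $f$ a minimal generator in the sense required by Theorem~\ref{Th OjVi1}.
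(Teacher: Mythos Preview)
Your proof is correct and follows essentially the same route as the paper: establish $G_\mathbf{b}(I_\mathcal{A}) = \big\{ \{\mathbf{x}^\mathbf{u}\}, \{\mathbf{x}^\mathbf{v}\} \big\}$, transfer this to $G_\mathbf{b}(J)$ via the subgraph inclusion, and invoke Theorem~\ref{Thm CombIndBinJ}. The only difference is that the paper obtains the description of $G_\mathbf{b}(I_\mathcal{A})$ by citing \cite[Corollary~7]{OjVi2}, whereas you supply a short self-contained swap argument (and cite \cite{Charalambous07}); both are valid.
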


\begin{proof}
Let $\mathbf{b}= \deg_\mathcal{A}(\mathbf{x}^\mathbf{u}) \, \big(= \deg_\mathcal{A}(\mathbf{x}^\mathbf{v}) \big).$ By \cite[Corollary 7]{OjVi2}, if $\mathbf{x}^\mathbf{u} - \mathbf{x}^\mathbf{v}$ is an indispensable binomial of $I_\mathcal{A},$ then $G_\mathbf{b}(I_\mathcal{A}) = \big\{ \{ \mathbf{x}^\mathbf{u} \}, \{ \mathbf{x}^\mathbf{v} \} \big\}.$ Since $\mathbf{x}^\mathbf{u}$ and $\mathbf{x}^\mathbf{v}$ are vertices of $G_\mathbf{b}(J)$ and $G_\mathbf{b}(J)$ is a subgraph of $G_\mathbf{b}(I_\mathcal{A}) ,$ then  $G_\mathbf{b}(J) = G_\mathbf{b}(I_\mathcal{A})$ and therefore, by Theorem \ref{Thm CombIndBinJ}, we conclude that  $\mathbf{x}^\mathbf{u} - \mathbf{x}^\mathbf{v}$ is an indispensable binomial of $J.$
\end{proof}

Again we have that the converse is not true; for instance, $x-y$ and $z-t$ are indispensable binomials of $J = \langle x-y, y^2 - yt,  z-t \rangle$ and none of them is indispensable of the toric ideal $I_\mathcal{A}.$

We close this section by applying our results to show that the binomial edge ideals introduced in \cite{Herzog10} have unique minimal system of binomial generators.

Let $G$ be an undirected connected simple graph of the vertex set $\{1, \ldots, n\}$ and let $\mathbbmss{k}[\mathbf{x}, \mathbf{y}]$ be the polynomial ring in $2n$ variables, $x_1, \ldots, x_n,$ $y_1, \ldots, y_n,$ over $\mathbbmss{k}.$

\begin{definition}
The binomial edge ideal $J_G \subset  \mathbbmss{k}[\mathbf{x}, \mathbf{y}]$ associated to $G$ is the ideal generated by the binomials $f_{ij} = x_i y_j - x_j y_i,$ with $i < j,$ such that $\{i, j\}$ is an edge of $G.$
\end{definition}

Let $J_G \subset  \mathbbmss{k}[\mathbf{x}, \mathbf{y}]$ be the binomial edge ideal associated to $G.$ By definition, $J_G$ is contained in the determinantal ideal generated by the $2 \times 2-$minors of $$\left(\begin{array}{ccc} x_1 & \ldots & x_n \\ y_1 & \ldots & y_n \end{array}\right).$$ This ideal is nothing but the toric ideal associated to the Lawrence lifting, $\Lambda(\mathcal{A}),$ of $\mathcal{A} = \{1, \ldots, 1\}$ (see, e.g. \cite[Chapter 7]{Sturmfels95}).  Thus, $J_G \subseteq I_{\Lambda(\mathcal{A})} $ and the equality holds if and only if  $G$ is the complete graph on $n$ vertices. By the way, since $G$ is connected, the smallest toric ideal containing $J_G$ has codimension $n-1.$ So, the smallest toric ideal containing $J_G$ is $I_{\Lambda(\mathcal{A})},$ that is to say, $\Lambda(\mathcal{A})$ is the finest grading on $\mathbbmss{k}[\mathbf{x}, \mathbf{y}]$ such that $J_G$ is  $\Lambda(\mathcal{A})-$homogeneous.

\begin{corollary}
The binomial edge ideal $J_G$ has unique minimal system of binomial generators.
\end{corollary}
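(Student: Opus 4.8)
The plan is to use the combinatorial criterion for indispensable binomials, Theorem~\ref{Thm CombIndBinJ}, applied to $J_G$ with the grading $\Lambda(\mathcal{A})$. Since a minimal system of binomial generators of $J_G$ is unique precisely when every binomial in some (equivalently every) minimal system of generators is indispensable, and since the generators $f_{ij} = x_iy_j - x_jy_i$ with $\{i,j\}$ an edge of $G$ form a minimal generating set, it suffices to show each $f_{ij}$ is indispensable of $J_G$. By Theorem~\ref{Thm CombIndBinJ} this will follow once I check that $G_{\mathbf{b}}(J_G) = \big\{\{x_iy_j\},\{x_jy_i\}\big\}$, where $\mathbf{b} = \deg_{\Lambda(\mathcal{A})}(x_iy_j)$.

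First I would describe the $\Lambda(\mathcal{A})$-degree explicitly. Under the Lawrence lifting of $\mathcal{A} = \{1,\dots,1\}$, the degree of a monomial $\mathbf{x}^{\mathbf{u}}\mathbf{y}^{\mathbf{v}}$ records the total degree $\sum(u_i+v_i)$ together with the exponent vector $\mathbf{u}$ (equivalently $\mathbf{v}$, given the total degree); concretely, two monomials have the same $\Lambda(\mathcal{A})$-degree iff they have the same $\mathbf{y}$-exponent vector \emph{and} the same $\mathbf{x}$-exponent vector entrywise sum, so the monomials of degree $\mathbf{b} = \deg(x_iy_j)$ are exactly $x_iy_j$ and $x_jy_i$ among monomials of the whole ring. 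Hence the vertex set of $G_{\mathbf{b}}(I_{\Lambda(\mathcal{A})})$ is at most $\{x_iy_j, x_jy_i\}$, and it is all of it because both lie in $M_{I_{\Lambda(\mathcal{A})}}$ (they are the two monomials of the minor). Restricting to $J_G$: $x_iy_j$ and $x_jy_i$ both lie in $M_{J_G}$ precisely when $\{i,j\}$ is an edge, which it is, so both are vertices of $G_{\mathbf{b}}(J_G)$.

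It remains to verify there is no edge between $x_iy_j$ and $x_jy_i$ in $G_{\mathbf{b}}(J_G)$. Condition~(a) of the edge definition asks $\gcd(x_iy_j, x_jy_i) \neq 1$; since $i\neq j$ this gcd is $1$, so the two vertices cannot be joined. Therefore $G_{\mathbf{b}}(J_G)$ has exactly the two isolated vertices $\{x_iy_j\}$ and $\{x_jy_i\}$, and Theorem~\ref{Thm CombIndBinJ} gives that $f_{ij}$ is indispensable of $J_G$. As this holds for every edge of $G$, every binomial in the minimal generating set is indispensable, so $J_G$ has a unique minimal system of binomial generators.

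I do not anticipate a serious obstacle: the whole argument rests on the elementary observation that $x_iy_j$ and $x_jy_i$ are coprime for $i\neq j$, together with the fact that the Lawrence lifting grading is fine enough that these are the \emph{only} two monomials of their degree. The only point requiring a little care is the explicit computation of the $\Lambda(\mathcal{A})$-degree and the verification that the grading is positive (no invertible elements in $\mathbbmss{k}[\mathbf{x},\mathbf{y}]/I_{\Lambda(\mathcal{A})}$), which is standard for Lawrence liftings and was already noted in the paragraph preceding the statement; one could alternatively cite \cite[Chapter 7]{Sturmfels95}.
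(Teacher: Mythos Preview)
Your proof is correct and takes a more direct, self-contained route than the paper's. The paper argues in two steps: it invokes \cite[Corollary~16]{OjVi2} to conclude that every minimal generator of $I_{\Lambda(\mathcal{A})}$ (in particular each $f_{ij}$) is indispensable of $I_{\Lambda(\mathcal{A})}$, and then applies Corollary~\ref{Cor CombIndBinJ2} to transfer indispensability from $I_{\Lambda(\mathcal{A})}$ down to $J_G$. You instead verify the hypothesis of Theorem~\ref{Thm CombIndBinJ} directly for $J_G$, by computing that the fibre $\deg_{\Lambda(\mathcal{A})}^{-1}(\mathbf{b})$ contains only the two coprime monomials $x_iy_j$ and $x_jy_i$, so $G_\mathbf{b}(J_G)$ consists of two isolated vertices. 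This avoids the external citation at the price of doing the (easy) fibre computation by hand; in effect you are reproving the special case of \cite[Corollary~16]{OjVi2} that is actually needed. One small wobble: your stated characterisation of when two monomials share the same $\Lambda(\mathcal{A})$-degree is not quite right (it should be: same total $\mathbf{x}$-degree \emph{and} same vector $\mathbf{u}+\mathbf{v}$, not same $\mathbf{y}$-exponent vector), but your conclusion that only $x_iy_j$ and $x_jy_i$ lie in the fibre is correct, so the argument is unaffected.
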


\begin{proof}
By \cite[Corollary 16]{OjVi2}, the toric ideal $I_{\Lambda(\mathcal{A})}$ is generated by its indispensable binomials, thus every $f_{ij} \in J_G,$  is an indispensable binomial of $I_{\Lambda(\mathcal{A})}.$ Now, by Corollary \ref{Cor CombIndBinJ2}, we conclude that $J_G$ is generated by its indispensable binomials.
\end{proof}

The above result can be viewed as a particular case of the following general result whose proof is also straightforward consequence of \cite[Corollary 16]{OjVi2} and Corollary \ref{Cor CombIndBinJ2}.

\begin{corollary}
Let $\mathcal{A} = \{\mathbf{a}_1, \ldots, \mathbf{a}_n \} \subseteq \mathbb{Z}^d$ be such that the monoid $\mathbb{N} \mathcal{A}$ is free of units. If $J \subseteq  \mathbbmss{k}[\mathbf{x}, \mathbf{y}]$ is a binomial ideal generated by a subset of the minimal system of binomial generators of $I_{\Lambda(\mathcal{A})} ,$ then $J$ has unique minimal system of binomial generators.
\end{corollary}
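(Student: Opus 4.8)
The plan is to deduce the statement from \cite[Corollary 16]{OjVi2}, which asserts that the Lawrence ideal $I_{\Lambda(\mathcal{A})}$ is generated by its indispensable binomials, together with the descent property for indispensability furnished by Corollary \ref{Cor CombIndBinJ2}. Thus the whole argument is a transfer of indispensability from $I_{\Lambda(\mathcal{A})}$ down to $J$, exactly as in the proof of the previous corollary.

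First I would check that the standing hypotheses under which Corollary \ref{Cor CombIndBinJ2} was proved hold for the grading by $\Lambda(\mathcal{A})$ on $\mathbbmss{k}[\mathbf{x}, \mathbf{y}]$: the last $n$ coordinates of every element of the monoid $\mathbb{N}\Lambda(\mathcal{A})$ are non-negative and vanish only at the origin, so $\mathbb{N}\Lambda(\mathcal{A})$ is free of units and $\mathbbmss{k}[\mathbf{x}, \mathbf{y}]/I_{\Lambda(\mathcal{A})}$ has no invertible elements; this holds automatically, the hypothesis that $\mathbb{N}\mathcal{A}$ be free of units being used only, if at all, to invoke \cite[Corollary 16]{OjVi2}. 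Next, $J$ is generated by $\Lambda(\mathcal{A})$-homogeneous pure difference binomials, hence is itself a $\Lambda(\mathcal{A})$-homogeneous pure difference binomial ideal; by Corollary \ref{Cor AgradJ} this gives $J \subseteq I_{\Lambda(\mathcal{A})}$, so the pair $\big(J, I_{\Lambda(\mathcal{A})}\big)$ is in the situation of Section 1.

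Now let $\{f_1, \ldots, f_r\}$ be the given subset of a minimal system of binomial generators of $I_{\Lambda(\mathcal{A})}$ that generates $J$. By \cite[Corollary 16]{OjVi2} the ideal $I_{\Lambda(\mathcal{A})}$ is generated by its indispensable binomials; since every indispensable binomial occurs, up to sign, in each system of binomial generators, the minimal system of binomial generators of $I_{\Lambda(\mathcal{A})}$ is precisely the set of its indispensable binomials, and therefore each $f_i$ is an indispensable binomial of $I_{\Lambda(\mathcal{A})}$. Because $f_i \in J$, Corollary \ref{Cor CombIndBinJ2} then shows that each $f_i$ is an indispensable binomial of $J$.

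To finish, I would observe that $\{f_1, \ldots, f_r\}$ generates $J$ and consists of indispensable binomials of $J$: hence any system of binomial generators of $J$ must contain each $f_i$ or $-f_i$, so no proper subset of $\{f_1, \ldots, f_r\}$ generates $J$; thus it is a minimal system of binomial generators of $J$, and it is the only one. I do not anticipate a genuine obstacle here — the paper itself calls the proof straightforward — and the only two points that require a word of care are verifying that the $\Lambda(\mathcal{A})$-grading is positive (so that the results of Section 1 apply verbatim) and making explicit the elementary passage from \emph{being generated by its indispensable binomials} to \emph{having a unique minimal system of binomial generators}.
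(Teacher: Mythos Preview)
Your proposal is correct and follows exactly the route the paper indicates: invoke \cite[Corollary 16]{OjVi2} to see that every minimal binomial generator of $I_{\Lambda(\mathcal{A})}$ is indispensable there, then apply Corollary \ref{Cor CombIndBinJ2} to transfer indispensability down to $J$. The extra care you take in checking that the $\Lambda(\mathcal{A})$-grading is positive and in spelling out why a generating set of indispensable binomials is the unique minimal one is appropriate and fills in what the paper leaves implicit.
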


\section{Critical binomials, circuits and primitive binomials}

This section deals with binomial ideals contained in the defining ideal of a monomial curve. Special attention should be paid to the critical ideal; this is due to the fact that the ideal of a monomial space curve is equal to the critical ideal, see \cite{Herzog70} (see also the definition of neat numerical semigroup in \cite{komeda}). Throughout this section $\mathcal{A} = \{a_1, \ldots, a_n\}$ is a set of relatively prime positive integers and $I_\mathcal{A} \subset \mathbbmss{k}[\mathbf{x}] = \mathbbmss{k}[x_1, \ldots, x_n]$ is the defining ideal of the monomial curve $x_1 = t^{a_1}, \ldots, x_n = t^{a_n}$ in the $n-$dimensional affine space over $\mathbbmss{k}.$

\subsection{Critical binomials}

\begin{definition}
A binomial $x_i^{c_i} - \prod_{j \neq i} x_j^{u_{ij}} \in I_\mathcal{A}$ is called \textbf{critical} with respect to $x_i$ if $c_i$ is the least positive integer such that $c_i a_i \in \sum_{j \neq i} \mathbb{N} a_j.$
The \textbf{critical ideal} of $\mathcal{A}$, denoted by $C_\mathcal{A}$, is the ideal of $\mathbbmss{k}[\mathbf{x}]$ generated by all the critical binomials of $I_\mathcal{A}.$
\end{definition}

Observe that the critical ideal of $\mathcal{A}$ is $\mathcal{A}-$homogeneous.

\begin{notation}
From now on and for the rest of the paper, we will write $c_i$ for the least positive integer such that $c_i a_i \in \sum_{j \neq i} \mathbb{N} a_j,$ for each $i = 1, \ldots, n.$
\end{notation}

\begin{proposition}\label{Prop critical0}
The monomials $x_i^{c_i}$ are indispensable of $I_\mathcal{A},$ for every $i.$ Equivalently, $\{x_i^{c_i} \}$ is a connected component of $G_b(I_\mathcal{A}),$ where $b = c_i a_i,$ for every $i.$
\end{proposition}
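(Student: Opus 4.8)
The plan is to use the combinatorial characterization of indispensable monomials from Theorem~\ref{Thm CombIndMonJ} applied to $J = I_\mathcal{A}$, which amounts to showing that $\{x_i^{c_i}\}$ is a connected component of $G_b(I_\mathcal{A})$ for $b = c_i a_i$. By Proposition~\ref{Prop IndMonJ}, this is equivalent to proving that $x_i^{c_i}$ is a minimal generator of the monomial ideal $M_{I_\mathcal{A}}$, i.e. that no proper divisor of $x_i^{c_i}$ lies in $M_{I_\mathcal{A}}$. Fix $i$ and write $a = a_i$, $c = c_i$. A proper divisor of $x_i^c$ is of the form $x_i^{e}$ with $0 < e < c$ (the case $e = c$ is $x_i^c$ itself; note $x_i^0 = 1 \notin M_{I_\mathcal{A}}$ since $\mathbb{N}\mathcal{A}$ is free of units).

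First I would observe that $x_i^e \in M_{I_\mathcal{A}}$ means there is a nonzero binomial $x_i^e - \mathbf{x}^{\mathbf{v}} \in I_\mathcal{A}$; by $\mathcal{A}$-homogeneity of $I_\mathcal{A}$ (via \cite[Lemma 4.1]{Sturmfels95}) this forces $e\,a = \deg_\mathcal{A}(x_i^e) = \deg_\mathcal{A}(\mathbf{x}^\mathbf{v}) = \sum_{j} v_j a_j$. If $v_i \geq 1$, we could cancel a factor of $x_i$ from both sides and obtain a binomial $x_i^{e-1} - \mathbf{x}^{\mathbf{v} - \mathbf{e}_i} \in I_\mathcal{A}$ of strictly smaller $x_i$-degree; iterating, we may assume $v_i = 0$, so $\mathbf{x}^\mathbf{v}$ is a monomial in the variables $x_j$, $j \neq i$, and $e\,a = \sum_{j \neq i} v_j a_j \in \sum_{j \neq i} \mathbb{N}a_j$. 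But then $e$ is a positive integer with $e\,a \in \sum_{j \neq i}\mathbb{N}a_j$ and $e < c$, contradicting the minimality of $c = c_i$ in the Notation. Hence no $x_i^e$ with $0 < e < c$ lies in $M_{I_\mathcal{A}}$, so $x_i^{c_i}$ is a minimal generator of $M_{I_\mathcal{A}}$, and the equivalence follows from Proposition~\ref{Prop IndMonJ} and Theorem~\ref{Thm CombIndMonJ}.

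The main subtlety — rather than a genuine obstacle — is the reduction to the case $v_i = 0$: one must be careful that cancelling $x_i$ keeps the binomial nonzero and inside $I_\mathcal{A}$. The binomial stays in $I_\mathcal{A}$ because $I_\mathcal{A}$ is prime and $x_i$ is a nonzerodivisor modulo $I_\mathcal{A}$ (equivalently, $x_i \notin I_\mathcal{A}$ since $t^{a_i} \neq 0$), and it stays nonzero because after removing common powers of $x_i$ the remaining exponent on $x_i$ on the left is still nonnegative and the two monomials still differ (a zero binomial would mean $x_i^{e}$ equals a monomial, impossible once $e > 0$). I would also note that the two phrasings in the statement are literally identified by Theorem~\ref{Thm CombIndMonJ} together with Proposition~\ref{Prop IndMonJ}, so the proof naturally delivers both at once; it may be cleanest to prove the second (combinatorial) statement first — that $\{x_i^{c_i}\}$ is a connected component of $G_{c_i a_i}(I_\mathcal{A})$, i.e. that no vertex $x_i^e$ with $e < c_i$ of the right $\mathcal{A}$-degree exists — and then invoke Theorem~\ref{Thm CombIndMonJ} for the first.
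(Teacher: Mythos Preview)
Your proof is correct and follows essentially the same idea as the paper's one-line proof: the minimality of $c_i$ forces that no proper power $x_i^e$ with $0<e<c_i$ can appear in a binomial of $I_\mathcal{A}$, hence $\{x_i^{c_i}\}$ is an isolated vertex of $G_{c_i a_i}(I_\mathcal{A})$, and Theorem~\ref{Thm CombIndMonJ} then gives indispensability. The only cosmetic difference is that you route the argument through Proposition~\ref{Prop IndMonJ} (minimal generators of $M_{I_\mathcal{A}}$) whereas the paper cites Theorem~\ref{Th OjVi1}; both amount to the same minimality check, and your version is simply more explicit.
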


\begin{proof}
The proof follows immediately from the minimality of $c_i$, Theorem \ref{Th OjVi1} and Theorem \ref{Thm CombIndMonJ}.
\end{proof}

The next proposition determines the indispensable critical binomials of the toric ideal $I_\mathcal{A}$.

\begin{theorem}\label{Thm Critical11}
Let $f = x_i^{c_i} - \prod_{j \neq i} x_j^{u_{ij}}$ be a critical binomial of $I_\mathcal{A},$ then $f$ is indispensable of $I_\mathcal{A}$ if, and only if, $f$ is indispensable of $C_\mathcal{A}.$
\end{theorem}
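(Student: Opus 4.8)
The plan is to prove the two implications separately, relying throughout on the combinatorial characterisations of indispensability provided by Theorems \ref{Thm CombIndMonJ} and \ref{Thm CombIndBinJ} together with the inclusion $C_\mathcal{A} \subseteq I_\mathcal{A}$. Set $\mathbf{b} = c_i a_i = \deg_\mathcal{A}(x_i^{c_i})$ and recall from Proposition \ref{Prop critical0} that $\{x_i^{c_i}\}$ is already a connected component of $G_\mathbf{b}(I_\mathcal{A})$, hence also of the subgraph $G_\mathbf{b}(C_\mathcal{A})$; so $x_i^{c_i}$ is automatically an indispensable monomial of both ideals, and the content of the statement is entirely about the second monomial $\prod_{j\neq i} x_j^{u_{ij}}$ of $f$.

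For the implication "$f$ indispensable of $I_\mathcal{A}$ $\Rightarrow$ $f$ indispensable of $C_\mathcal{A}$", I would argue exactly as in Corollary \ref{Cor CombIndBinJ2}: by \cite[Corollary 7]{OjVi2}, indispensability of $f$ in $I_\mathcal{A}$ forces $G_\mathbf{b}(I_\mathcal{A}) = \big\{\{x_i^{c_i}\}, \{\prod_{j\neq i} x_j^{u_{ij}}\}\big\}$; since $G_\mathbf{b}(C_\mathcal{A})$ is a subgraph of $G_\mathbf{b}(I_\mathcal{A})$ on the same vertex set (because $M_{C_\mathcal{A}}$ and $M_{I_\mathcal{A}}$ agree in degree $\mathbf{b}$ — any monomial of $\mathcal{A}$-degree $\mathbf{b}$ other than $x_i^{c_i}$ is supported off $x_i$ and hence the binomial pairing it with $x_i^{c_i}$ is critical, so it already lies in $M_{C_\mathcal{A}}$), the two graphs coincide and Theorem \ref{Thm CombIndBinJ} gives the conclusion. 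This direction is essentially formal.

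The substantive direction is "$f$ indispensable of $C_\mathcal{A}$ $\Rightarrow$ $f$ indispensable of $I_\mathcal{A}$". Here I would use Theorem \ref{Thm CombIndBinJ} in the form: it suffices to show $G_\mathbf{b}(I_\mathcal{A}) = \big\{\{x_i^{c_i}\},\{\prod_{j\neq i} x_j^{u_{ij}}\}\big\}$. We already know $G_\mathbf{b}(C_\mathcal{A})$ has this shape, so since $G_\mathbf{b}(C_\mathcal{A})$ and $G_\mathbf{b}(I_\mathcal{A})$ have the same vertex set (as noted above), what must be ruled out is the appearance in $G_\mathbf{b}(I_\mathcal{A})$ of the single possible extra edge joining $x_i^{c_i}$ to $\prod_{j\neq i} x_j^{u_{ij}}$. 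Such an edge would mean there is a monomial $1 \neq \mathbf{x}^\mathbf{w}$ dividing $\gcd(x_i^{c_i}, \prod_{j\neq i} x_j^{u_{ij}})$ with $x_i^{c_i}\mathbf{x}^{-\mathbf{w}} - \prod_{j\neq i} x_j^{u_{ij}}\mathbf{x}^{-\mathbf{w}} \in I_\mathcal{A}$; but $\gcd(x_i^{c_i}, \prod_{j\neq i} x_j^{u_{ij}}) = 1$ since the two monomials have disjoint supports, so no such $\mathbf{w}$ exists and $G_\mathbf{b}(I_\mathcal{A})$ has no edge at all between these two vertices. Hence $G_\mathbf{b}(I_\mathcal{A}) = \big\{\{x_i^{c_i}\},\{\prod_{j\neq i} x_j^{u_{ij}}\}\big\}$ and $f$ is indispensable of $I_\mathcal{A}$.

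I expect the main subtlety to be the claim that $G_\mathbf{b}(C_\mathcal{A})$ and $G_\mathbf{b}(I_\mathcal{A})$ share the same vertex set, i.e.\ that $M_{C_\mathcal{A}}$ and $M_{I_\mathcal{A}}$ coincide in $\mathcal{A}$-degree $\mathbf{b} = c_i a_i$: one must check that every monomial $\mathbf{x}^\mathbf{v} \neq x_i^{c_i}$ with $\deg_\mathcal{A}(\mathbf{x}^\mathbf{v}) = c_i a_i$ lies in $M_{C_\mathcal{A}}$, which follows because $\mathbf{v}_i < c_i$ (by minimality of $c_i$, a monomial of this degree divisible by $x_i^{c_i}$ would have to equal $x_i^{c_i}$), so $(c_i - \mathbf{v}_i) a_i = \sum_{j\neq i}\mathbf{v}_j a_j \in \sum_{j\neq i}\mathbb{N}a_j$ forces $\mathbf{v}_i = 0$ again by minimality of $c_i$, whence $\mathbf{x}^\mathbf{v}$ is the positive part of a critical binomial with respect to $x_i$. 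Once this vertex-set identity is in hand, both implications reduce to the disjoint-support observation and the already-established Proposition \ref{Prop critical0}, and the proof is short.
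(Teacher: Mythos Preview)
Your forward implication is fine and matches the paper's one-line appeal to Corollary~\ref{Cor CombIndBinJ2}. The reverse implication, however, has a genuine gap. You assert that ``we already know $G_\mathbf{b}(C_\mathcal{A})$ has this shape,'' meaning exactly the two isolated vertices $x_i^{c_i}$ and $\prod_{j\neq i} x_j^{u_{ij}}$, but nothing you have written or cited establishes that these are the \emph{only} vertices. Indispensability of $f$ in $C_\mathcal{A}$ does not a priori bound the cardinality of the fiber $\deg_\mathcal{A}^{-1}(\mathbf{b})$: that would be the converse of Theorem~\ref{Thm CombIndBinJ}, which the paper explicitly notes fails for general $\mathcal{A}$-homogeneous binomial ideals, and \cite[Corollary~7]{OjVi2}, which you invoke in the forward direction, applies only to the toric ideal $I_\mathcal{A}$, not to $C_\mathcal{A}$. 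Your vertex-set argument correctly shows that every monomial of $\mathcal{A}$-degree $\mathbf{b}$ already lies in $M_{C_\mathcal{A}}$, but it does not exclude the existence of a third such monomial; if one exists, your ``single possible extra edge'' reasoning collapses.

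To close the gap you would have to show that a third monomial $\mathbf{x}^\mathbf{w}$ of degree $\mathbf{b}$ makes $f$ dispensable in $C_\mathcal{A}$: indeed $g = x_i^{c_i} - \mathbf{x}^\mathbf{w}$ is then a second critical binomial with respect to $x_i$, and in any binomial generating set of $C_\mathcal{A}$ containing $f$ one may replace $f$ by the pair $g$ and $f - g \in C_\mathcal{A}$. But this is precisely the paper's argument, which bypasses the graphs entirely: starting from a hypothetical binomial generating set of $I_\mathcal{A}$ omitting $\pm f$, Proposition~\ref{Prop critical0} locates in it some critical binomial $f_l = x_i^{c_i} - \prod_{j\neq i} x_j^{v_j}$ with $f_l \neq f$, and then $f$ may be replaced by $f_l$ and $f - f_l$ in a generating set of $C_\mathcal{A}$, contradicting indispensability of $f$ there. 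Your combinatorial reformulation is attractive, but as written the hard step is assumed rather than proved, and supplying it recovers the paper's direct argument.
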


\begin{proof}
By Corollary  \ref{Cor CombIndBinJ2}, we have that if  $f$ is indispensable of $I_\mathcal{A}$, then it is indispensable of $C_\mathcal{A}.$ Conversely, assume that $f$ is indispensable of $C_\mathcal{A}.$ Let $\{f_1, \ldots, f_s\}$ be a system of binomial generators of $I_\mathcal{A}$ not containing $f.$ Then, by Proposition  \ref{Prop critical0},  $f_l = x_i^{c_i} - \prod_{j \neq i} x_j^{v_j}$ for some $l.$ So, $f_l$ is a critical binomial, that is to say, $f_l \in C_\mathcal{A}.$ Therefore, we may replace $f$ by $f_l$ and $f - f_l \in C_\mathcal{A}$ in a system of binomial generators of $C_\mathcal{A},$ a contradiction to the fact that $f$ is indispensable of $C_\mathcal{A}.$
\end{proof}

\begin{corollary}\label{Cor Critical}
If $I_\mathcal{A}$ has unique minimal system of binomial generators, then $C_\mathcal{A}$ also does.
\end{corollary}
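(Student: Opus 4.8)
The plan is to argue that if $I_\mathcal{A}$ admits a unique minimal system of binomial generators, then it is generated by its indispensable binomials, and to transfer this property to $C_\mathcal{A}$ via Theorem~\ref{Thm Critical11} together with the structural fact that every minimal generating set of $I_\mathcal{A}$ contains a full set of critical binomials (one per variable). First I would recall the standard equivalence: a toric (or $\mathcal{A}$-homogeneous pure difference binomial) ideal has a unique minimal system of binomial generators if and only if it is generated by its indispensable binomials. One direction is trivial; for the other, if the generating set were not unique one could, in some $\mathcal{A}$-degree $\mathbf{b}$ where $G_\mathbf{b}(I_\mathcal{A})$ has more than two vertices or a non-rigid structure, swap one generator for another, contradicting uniqueness. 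So from the hypothesis we obtain that $I_\mathcal{A}$ is generated by indispensable binomials, and in particular that every minimal generator of $I_\mathcal{A}$ is indispensable.

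Next I would observe, using Proposition~\ref{Prop critical0}, that for each $i$ the monomial $x_i^{c_i}$ is indispensable of $I_\mathcal{A}$; hence in \emph{any} minimal system of binomial generators of $I_\mathcal{A}$ there is a binomial of the form $x_i^{c_i} - \prod_{j\neq i} x_j^{u_{ij}}$, which is by definition a critical binomial lying in $C_\mathcal{A}$. Since $I_\mathcal{A}$ is generated by its indispensable binomials and these critical binomials must appear, each such critical binomial is itself an indispensable binomial of $I_\mathcal{A}$; by Theorem~\ref{Thm Critical11} it is then indispensable of $C_\mathcal{A}$. Thus $C_\mathcal{A}$ contains, for every $i$, an indispensable critical binomial with leading term $x_i^{c_i}$, and these binomials are forced into every binomial generating set of $C_\mathcal{A}$.

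It then remains to check that these forced binomials already generate $C_\mathcal{A}$, so that $C_\mathcal{A}$ has no room for a second minimal system. Here I would argue as follows: let $B = \{x_i^{c_i} - \prod_{j\neq i} x_j^{u_{ij}} : i = 1,\dots,n\}$ be the set of indispensable critical binomials selected above, and suppose $C_\mathcal{A}$ needed a further minimal generator $g$ in some $\mathcal{A}$-degree $\mathbf{b}$. Any critical binomial has leading monomial $x_i^{c_i}$ for some $i$, so modulo the elements of $B$ it can be reduced, and the reduction stays inside $C_\mathcal{A}$; more precisely, since $C_\mathcal{A}$ is generated by critical binomials and each of those, modulo $B$, lies in the ideal generated by $B$ (because $x_i^{c_i}$ is the unique leading monomial available in that degree for a critical binomial, so the two critical binomials with respect to $x_i$ differ by an element of $C_\mathcal{A}$ whose support is strictly smaller), one shows $\langle B\rangle = C_\mathcal{A}$. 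Consequently $C_\mathcal{A}$ is generated by its indispensable binomials and hence has a unique minimal system of binomial generators.

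The main obstacle I anticipate is the last step: making precise why the set $B$ of indispensable critical binomials generates all of $C_\mathcal{A}$, i.e.\ that every critical binomial with respect to $x_i$ is congruent, modulo $B$, to the distinguished one in $B$. This needs the observation that two critical binomials with respect to the same $x_i$ have the same leading term $x_i^{c_i}$ and hence their difference is a binomial in $C_\mathcal{A}$ supported on strictly smaller monomials, which can be handled by descending induction on the $\mathcal{A}$-degree (or by invoking Theorem~\ref{Thm Critical11} and Theorem~\ref{Thm CombIndBinJ} to pin down the graph $G_\mathbf{b}(C_\mathcal{A})$ in the relevant degrees). Everything else is a direct combination of Proposition~\ref{Prop critical0}, Theorem~\ref{Thm Critical11}, and the uniqueness-equals-indispensability principle.
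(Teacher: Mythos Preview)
Your overall strategy---using Proposition~\ref{Prop critical0} to force a critical binomial $f_i = x_i^{c_i} - m_i$ into the unique minimal system of $I_\mathcal{A}$, observing it must then be indispensable, and invoking Theorem~\ref{Thm Critical11}---matches the paper's approach up to that point. The trouble is precisely the step you flag as the main obstacle: showing that $B = \{f_1,\ldots,f_n\}$ already generates $C_\mathcal{A}$. Your proposed argument does not close. If $g_i = x_i^{c_i} - m'$ is another critical binomial with respect to $x_i$, then $g_i - f_i = m_i - m'$ is a binomial in $I_\mathcal{A}$, but there is no a~priori reason for it to lie in $C_\mathcal{A}$: it is not itself critical, and expressing it as a combination of critical binomials is exactly what remains to be proved. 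Moreover $m_i - m'$ has the \emph{same} $\mathcal{A}$-degree $c_i a_i$ as $f_i$ and $g_i$, so your ``descending induction on the $\mathcal{A}$-degree'' has nothing to descend on.

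The paper sidesteps this difficulty by observing that, under the hypothesis, no second critical binomial $g_i$ exists at all. Since the minimal generator $f_i$ is an indispensable binomial of $I_\mathcal{A}$, the characterization used in the proof of Corollary~\ref{Cor CombIndBinJ2} (namely \cite[Corollary~7]{OjVi2}) gives $G_{c_i a_i}(I_\mathcal{A}) = \big\{\{x_i^{c_i}\},\{m_i\}\big\}$; for a toric ideal the vertices of $G_b$ are exactly the monomials in the fiber $\deg_\mathcal{A}^{-1}(b)$, so that fiber consists of the two monomials $x_i^{c_i}$ and $m_i$ and nothing else. Hence $f_i$ is the \emph{only} critical binomial with respect to $x_i$, $C_\mathcal{A} = \langle f_1,\ldots,f_n\rangle$ trivially, and there is no alternative generator to swap in. This is what reduces the paper's proof to two lines, and it is the missing ingredient in your argument.
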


\begin{proof}
The monomials $x_i^{c_i}$ are indispensable of $I_\mathcal{A},$ for each $i$ (see Proposition \ref{Prop critical0}). Thus, for every $i,$ there exists a unique binomial in $I_\mathcal{A}$ of the form $x_i^{c_i} - \prod_{j \neq i} x_j^{u_{ij}}$ and we conclude that $C_\mathcal{A}$ has unique minimal system of binomial generators.
\end{proof}

\begin{example}
Let $\mathcal{A}=\{4, 6, 2a+1, 2a+3\}$ where $a$ is a natural number. For $a=0,$ it is easy to see that $I_{\mathcal{A}}$ does not have a unique minimal system of binomial generators. If $a \geq 1,$ then $x_4^2 - x_1^a x_2$ and $x_4^2 - x_1 x_3^2 \in C_\mathcal{A}.$ Thus $C_\mathcal{A}$ is not generated by its indispensable binomials and therefore $I_{\mathcal{A}}$ does not have a unique minimal system of binomial generators.
\end{example}

\subsection{Circuits}\mbox{}\par

\medskip
Recall that the support of a monomial $\mathbf{x}^\mathbf{u}$ is defined to be the set $\mathrm{supp}(\mathbf{x}^\mathbf{u})=\{i \in \{1, \ldots, n\}\ \mid\ u_i \neq 0\}.$  The support of a binomial $f=\mathbf{x}^\mathbf{u} - \mathbf{x}^\mathbf{v} \in I_\mathcal{A}$, denoted by $\mathrm{supp}(f)$, is defined as the union $\mathrm{supp}(\mathbf{x}^\mathbf{u}) \cup \mathrm{supp}(\mathbf{x}^\mathbf{v}).$ We say that $f$ has full support when $\mathrm{supp}(f)=\{1,\ldots,n\}$.

\begin{definition}
An irreducible binomial $\mathbf{x}^\mathbf{u} - \mathbf{x}^ \mathbf{v} \in I_\mathcal{A}$ is called a \textbf{circuit} if its support is minimal with respect the inclusion.
\end{definition}

Recall that a polynomial in $\mathbbmss{k}[\mathbf{x}]$ is said to be \emph{irreducible} if it cannot be factored into the product of two (or more) non-trivial polynomials in $\mathbbmss{k}[\mathbf{x}]$.

\begin{lemma}\label{Lemma Circuits1}
Let $u_j(i) = \frac{a_i}{\mathrm{gcd}(a_i,a_j)},\ i \neq j.$ The set of circuits in $I_\mathcal{A}$ is equal to $$ \{x_i^{u_i(j)} - x_j^{u_j(i)} \mid i \neq j \}.$$
\end{lemma}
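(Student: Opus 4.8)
The plan is to characterize circuits of $I_\mathcal{A}$ directly from the definition. Recall that $\mathcal{A}=\{a_1,\dots,a_n\}$ consists of relatively prime positive integers, so $I_\mathcal{A}$ is the kernel of $x_i\mapsto t^{a_i}$, a one-dimensional toric ideal; its lattice $\mathcal{L}$ has rank $n-1$. First I would show that each proposed binomial $g_{ij}:=x_i^{u_i(j)}-x_j^{u_j(i)}$ genuinely lies in $I_\mathcal{A}$ and is a circuit: since $u_i(j)\,a_i = \tfrac{a_i a_j}{\gcd(a_i,a_j)} = \mathrm{lcm}(a_i,a_j) = u_j(i)\,a_j$, the binomial is $\mathcal{A}$-homogeneous, hence in $I_\mathcal{A}$ by \cite[Lemma 4.1]{Sturmfels95}; it is irreducible because $\gcd(u_i(j),u_j(i))=1$ (dividing $a_ia_j/\gcd(a_i,a_j)^2$ by common factors would contradict minimality of $\gcd$); and its support $\{i,j\}$ cannot properly contain the support of any nonzero binomial of $I_\mathcal{A}$, since a binomial supported on the single index $\{i\}$ would force $a_i\cdot(\text{something})=0$, impossible for positive $a_i$, and a binomial supported on $\varnothing$ is zero. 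So every $g_{ij}$ is a circuit.

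Conversely, I would take an arbitrary circuit $f=\mathbf{x}^\mathbf{u}-\mathbf{x}^\mathbf{v}\in I_\mathcal{A}$ and show $\mathrm{supp}(f)$ has exactly two elements, say $\{i,j\}$, and that $f=\pm g_{ij}$. Irreducibility gives $\mathrm{supp}(\mathbf{x}^\mathbf{u})\cap\mathrm{supp}(\mathbf{x}^\mathbf{v})=\varnothing$. The support cannot be a single index (same positivity argument as above, since $\mathbf{x}^\mathbf{u}-\mathbf{x}^\mathbf{v}$ nonzero with $a$'s positive forces both monomials nonconstant on disjoint supports). If $|\mathrm{supp}(f)|\geq 2$, pick two distinct indices $i\in\mathrm{supp}(\mathbf{x}^\mathbf{u})$, $j\in\mathrm{supp}(\mathbf{x}^\mathbf{v})$; then $g_{ij}\in I_\mathcal{A}$ has support $\{i,j\}\subseteq\mathrm{supp}(f)$. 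By minimality of the support of a circuit we must have $\{i,j\}=\mathrm{supp}(f)$, so $\mathbf{x}^\mathbf{u}=x_i^{p}$ and $\mathbf{x}^\mathbf{v}=x_j^{q}$ for positive $p,q$ with $p\,a_i=q\,a_j$. Minimality of such a relation — equivalently, that $f$ is not a proper power times a smaller binomial, which follows from $f$ being a circuit hence irreducible — forces $p=u_i(j)$ and $q=u_j(i)$: indeed $p\,a_i=q\,a_j$ implies $\mathrm{lcm}(a_i,a_j)\mid p\,a_i$, so $u_i(j)=\mathrm{lcm}(a_i,a_j)/a_i$ divides $p$, and writing $p=u_i(j)$ is the smallest choice; if $p$ were a proper multiple $k\,u_i(j)$ then $f=x_i^{k u_i(j)}-x_j^{k u_j(i)}$ would be divisible (as a difference of $k$-th powers) by $g_{ij}$, contradicting irreducibility. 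Hence $f=\pm g_{ij}$.

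The one genuinely delicate point is the last divisibility step: one must be careful that $x_i^{ku_i(j)}-x_j^{ku_j(i)}$ being "reducible" in the relevant sense really does contradict the circuit hypothesis. The clean way is to invoke that a circuit is by definition irreducible, and $a^k-b^k=(a-b)(a^{k-1}+\cdots+b^{k-1})$ shows $x_i^{ku_i(j)}-x_j^{ku_j(i)}$ is not irreducible for $k\geq 2$; alternatively, one argues at the level of the lattice $\mathcal{L}$, noting that the kernel of $(a_i,a_j)\colon\mathbb{Z}^2\to\mathbb{Z}$ is generated by $(u_i(j),-u_j(i))$, so any integer relation is a multiple of this primitive vector, and only $\pm$ the primitive one gives an irreducible binomial. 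Everything else is routine bookkeeping with $\gcd$ and $\mathrm{lcm}$.
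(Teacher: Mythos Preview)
Your argument is correct and self-contained. The paper itself does not prove this lemma at all: its entire proof is the single line ``See \cite[Chapter 4]{Sturmfels95}.'' So you have supplied what the paper omits, namely the direct characterization of circuits for a rank-one configuration $\mathcal{A}\subset\mathbb{Z}_{>0}$. Your two directions are exactly the standard ones: each $g_{ij}$ is an irreducible binomial in $I_\mathcal{A}$ with the minimal possible support size (two), and conversely any circuit must have two-element support by comparison with some $g_{ij}$, after which the primitive-vector argument in $\ker(a_i,a_j)\subset\mathbb{Z}^2$ pins down the exponents. The irreducibility step you flag as delicate is handled correctly by either of your two suggestions; the factorization $a^k-b^k=(a-b)(a^{k-1}+\cdots+b^{k-1})$ is the cleanest.

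Two cosmetic remarks. First, your parenthetical justification for $\gcd(u_i(j),u_j(i))=1$ is a bit cryptic; simply saying that $a_i/\gcd(a_i,a_j)$ and $a_j/\gcd(a_i,a_j)$ are coprime by definition of $\gcd$ is clearer. Second, you implicitly use that $x_i^p-x_j^q$ with $\gcd(p,q)=1$ is irreducible in $\mathbbmss{k}[\mathbf{x}]$; this is standard, but if you want to be fully self-contained you might state it. Neither affects correctness.
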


\begin{proof}
See \cite[Chapter 4]{Sturmfels95}
\end{proof}

The next theorem provides a class of toric ideals generated by critical binomials that, moreover, are circuits.

\begin{theorem}\label{Th Result}
If $C_\mathcal{A} = \langle x_1^{c_1} - x_2^{c_2}, \ldots, x_{n-1}^{c_{n-1}} -  x_{n}^{c_{n}} \rangle,$ then $C_\mathcal{A} = I_\mathcal{A}.$
\end{theorem}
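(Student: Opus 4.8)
The plan is to show that the hypothesis $C_\mathcal{A} = \langle x_1^{c_1} - x_2^{c_2}, \ldots, x_{n-1}^{c_{n-1}} - x_n^{c_n}\rangle$ forces a very rigid arithmetic structure on $\mathcal{A}$, namely that the exponents appearing in the critical binomials are exactly the circuit exponents $u_i(j)$ from Lemma \ref{Lemma Circuits1}, and then use this to prove $I_\mathcal{A} \subseteq C_\mathcal{A}$ (the reverse inclusion is automatic since $C_\mathcal{A}$ is generated by binomials of $I_\mathcal{A}$). First I would record that for each $i$ the given generators provide, after matching $\mathcal{A}$-degrees, two critical binomials $x_i^{c_i} - x_{i-1}^{c_{i-1}}$ and $x_i^{c_i} - x_{i+1}^{c_{i+1}}$ involving $x_i$; comparing $\mathcal{A}$-degrees gives $c_i a_i = c_{i-1}a_{i-1} = c_{i+1}a_{i+1}$, so in fact $c_1 a_1 = c_2 a_2 = \cdots = c_n a_n =: m$, a common value. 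Since $c_i$ is by definition the least positive integer with $c_i a_i \in \sum_{j\neq i}\mathbb{N}a_j$, and $c_i a_i = c_{i\pm1}a_{i\pm1}$ exhibits such a representation supported on a single variable, minimality forces $c_i a_i = \mathrm{lcm}(a_i,a_j)$ for the relevant neighbours, i.e. $c_i = u_i(j) = a_j/\gcd(a_i,a_j)$; thus each generator $x_i^{c_i} - x_{i+1}^{c_{i+1}}$ is precisely the circuit $x_i^{u_i(i+1)} - x_{i+1}^{u_{i+1}(i)}$ of Lemma \ref{Lemma Circuits1}.

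Next I would exploit the graph criterion of Theorem \ref{Th OjVi1} together with Proposition \ref{Prop critical0}. The key point to establish is that $m = c_1a_1 = \cdots = c_na_n$ is the unique $\mathcal{A}$-degree in which $I_\mathcal{A}$ has minimal generators, beyond which everything is generated. One shows that any monomial $\mathbf{x}^\mathbf{u}$ with $\deg_\mathcal{A}(\mathbf{x}^\mathbf{u}) = b$ and $u_i \geq c_i$ for some $i$ is connected in $G_b(I_\mathcal{A})$ to a monomial with smaller $x_i$-exponent via the critical binomial, so the only monomials that can be isolated vertices of their graph have every exponent strictly below the corresponding $c_i$. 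If $\mathbf{x}^\mathbf{u} - \mathbf{x}^\mathbf{v}$ is a minimal generator of $I_\mathcal{A}$ of degree $b$, then (using that $\mathbf{x}^\mathbf{u}$ and $\mathbf{x}^\mathbf{v}$ are in different components by Theorem \ref{Th OjVi1}) I would argue that $b$ must coincide with $m$ and that, after cancelling a common factor, $\mathbf{x}^\mathbf{u} - \mathbf{x}^\mathbf{v}$ is one of the circuits $x_i^{c_i} - x_j^{c_j}$; these all lie in $C_\mathcal{A}$ because the hypothesis says the $x_i^{c_i}$ are pairwise congruent modulo $C_\mathcal{A}$, hence $x_i^{c_i} - x_j^{c_j} \in C_\mathcal{A}$ for all $i,j$. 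Since a set of minimal generators of $I_\mathcal{A}$ can be chosen inside $C_\mathcal{A}$, we get $I_\mathcal{A} \subseteq C_\mathcal{A}$, and therefore $C_\mathcal{A} = I_\mathcal{A}$.

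The main obstacle I anticipate is the claim that no minimal generator of $I_\mathcal{A}$ can have $\mathcal{A}$-degree other than $m$: one must rule out generators whose two monomials each have all exponents below the thresholds $c_i$ but which are not among the listed circuits. I would handle this by a careful case analysis on $G_b(I_\mathcal{A})$ for such a hypothetical degree $b < $ (or incomparable to) $m$, showing that connectedness of $G_b$ forces the two monomials into the same component — essentially because the only "obstruction" binomials available are the critical ones and $b$ is too small to support a new relation. A clean way to package this may be to invoke that $C_\mathcal{A} = \langle x_1^{c_1}-x_2^{c_2},\ldots\rangle$ is itself a complete intersection of the right codimension $n-1$ (the circuits form a regular sequence by the degree computation), so $C_\mathcal{A}$ is prime of dimension $1$; being a prime binomial ideal containing the same lattice as $I_\mathcal{A}$, it must equal $I_\mathcal{A}$ by the uniqueness part of Theorem \ref{Th AgradJ}. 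This primality route is likely the shortest and sidesteps the delicate graph bookkeeping; the obstacle then shifts to verifying that the $n-1$ circuits indeed form a regular sequence, which follows from $\mathrm{ht}\, C_\mathcal{A} \geq n-1$ together with the fact that $C_\mathcal{A} \subseteq I_\mathcal{A}$ has height exactly $n-1$.
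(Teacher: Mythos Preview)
Your opening observations are correct and useful: from $x_i^{c_i}-x_{i+1}^{c_{i+1}}\in I_\mathcal{A}$ you do get $c_1a_1=\cdots=c_na_n$, and each generator is the corresponding circuit. But both of your proposed ways to finish have a genuine gap.

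For route (B), the inference ``complete intersection of codimension $n-1$, so $C_\mathcal{A}$ is prime'' is false in general. A binomial complete intersection need not be prime: already $\langle x^2-y^2\rangle\subset\mathbbmss{k}[x,y]$ is a codimension-one complete intersection that factors. The coprimality conditions $\gcd(c_i,c_{i+1})=1$ you obtain do rule out such trivial factorizations, but they do not by themselves give primality, and the uniqueness statement in Theorem~\ref{Th AgradJ} only tells you $I_\mathcal{A}$ is a \emph{minimal} prime of $C_\mathcal{A}$; it does not let you conclude $C_\mathcal{A}=I_\mathcal{A}$ unless you already know $C_\mathcal{A}$ is prime. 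So this route, as written, is circular. For route (A), you correctly identify the obstacle---ruling out minimal generators $\mathbf{x}^\mathbf{u}-\mathbf{x}^\mathbf{v}$ with all exponents below the $c_i$---but the promised ``careful case analysis on $G_b(I_\mathcal{A})$'' is never carried out, and it is precisely here that the real work lies; the graph criterion alone does not make this step routine.

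The paper takes a different route that avoids both difficulties. First, since every circuit of $I_\mathcal{A}$ has the form $x_k^{c_k}-x_l^{c_l}$ with $\gcd(c_k,c_l)=1$, a result of Alc\'antar--Villarreal gives that $\ker_\mathbb{Z}(\mathcal{A})$ is generated by the circuit vectors $c_k\mathbf{e}_k-c_l\mathbf{e}_l$; in particular $\{c_i\mathbf{e}_i-c_{i+1}\mathbf{e}_{i+1}\mid 1\le i\le n-1\}$ is a \emph{lattice basis}, so $C_\mathcal{A}$ is a lattice basis ideal. The matrix whose rows are these vectors is \emph{mixed dominating} (each row has a positive and a negative entry, and no square submatrix has this mixed property in a problematic way---here the bidiagonal shape makes this immediate). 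Fischer and Shapiro proved that a lattice basis ideal with mixed dominating matrix equals the full toric ideal, which yields $C_\mathcal{A}=I_\mathcal{A}$ directly. This argument packages exactly the primality/saturation issue you were trying to get around: the mixed dominating condition is what guarantees that the lattice basis ideal is already equal to its saturation $I_\mathcal{A}$, something your complete-intersection argument cannot supply on its own.
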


\begin{proof} From the hypothesis the binomial $x_i^{c_i} - x_{i+1}^{c_{i+1}}$ belongs to $I_\mathcal{A}$, for each $i \in \{1,\ldots,n-1\}$. So, every circuit of $I_\mathcal{A}$ is of the form $x_k^{c_k}-x_l^{c_l}$, since $\mathrm{gcd}(c_k, c_l) = 1$. Now, from Proposition 2.2 in \cite{Villarreal}, the lattice $L=\ker_{\mathbb{Z}}(\mathcal{A})=\{{\bf u} \in \mathbb{Z}^n | u_1a_1+\ldots+u_na_n=0\}$ is generated by $\big\{ c_i \mathbf{e}_i - c_{j} \mathbf{e}_{j} \mid 1 \leq i \leq j \leq n \big\},$ where $\mathbf{e}_i$ is the vector with $1$ in the $i-$th position and zeros elsewhere. The rank of $L$ equals $n-1$ and a lattice basis is $\big\{{\bf v}_i=c_i \mathbf{e}_i - c_{i+1} \mathbf{e}_{i+1} \mid 1 \leq i \leq n-1 \big\}.$ Thus $C_\mathcal{A}$ is a lattice basis ideal. Let $M$ be the matrix with rows ${\bf v}_1,\ldots,{\bf v}_{n-1},$ then $M$ is a mixed dominating matrix and therefore, from Theorem 2.9 in \cite{Fischer}, the equality $C_\mathcal{A} = I_\mathcal{A}$ holds.
\end{proof}

\begin{remarks}\mbox{}\par
\begin{enumerate}
\item For $n = 4,$ a different proof of the above result can be found in \cite{Bresinsky75}.
\item The converse of Theorem \ref{Th Result} is not true in general (see, e.g., \cite{Villarreal}).
\item If every critical binomial of $I_\mathcal{A}$ is a circuit and the critical ideal has codimension $n-1,$ then $c_i a_i = c_ja_j ,$ for every $i \neq j.$ In particular, all minimal generators of $I_\mathcal{A}$ have the same $\mathcal{A}-$degree. This situation is explored in some detail in \cite{RGS11} from a semigroup viewpoint.
\end{enumerate}
\end{remarks}

The rest of this subsection is devoted to the investigation of necessary and sufficient conditions for a circuit to be indispensable of $I_\mathcal{A}$.

\begin{lemma}\label{Lemma Circuits2}
Let $f = x_i^{u_i(j)} - x_j^{u_j(i)} \in I_\mathcal{A}$ be a circuit and let $b = u_i(j) a_i.$ Then there is no monomial $\mathbf{x}^\mathbf{v}$ in the fiber $\deg_\mathcal{A}^{-1}(b)$ such that $\mathrm{supp}(\mathbf{x}^\mathbf{v})=\{i,j\}.$
\end{lemma}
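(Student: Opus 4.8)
The plan is to translate the assertion into a statement about non‑negative integer solutions of a single linear equation and then dispose of it with a short coprimality argument. First, using the formulas of Lemma~\ref{Lemma Circuits1}, I would observe that
$b = u_i(j)\, a_i = \frac{a_i a_j}{\gcd(a_i,a_j)} = \mathrm{lcm}(a_i,a_j)$
(and also $b = u_j(i)\, a_j$, as it must be since $f$ is $\mathcal{A}$-homogeneous). Hence a monomial $\mathbf{x}^\mathbf{v}$ with $\deg_\mathcal{A}(\mathbf{x}^\mathbf{v}) = b$ and $\mathrm{supp}(\mathbf{x}^\mathbf{v}) = \{i,j\}$ is precisely a monomial $x_i^{p} x_j^{q}$ with $p \geq 1$, $q \geq 1$ and $p\, a_i + q\, a_j = \mathrm{lcm}(a_i,a_j)$; so it suffices to show this equation has no such solution.

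Next, set $d = \gcd(a_i,a_j)$ and write $a_i = d\alpha$, $a_j = d\beta$ with $\gcd(\alpha,\beta) = 1$ and $\alpha,\beta \geq 1$. Dividing the equation by $d$ gives $p\alpha + q\beta = \alpha\beta$. From $p\alpha = \beta(\alpha - q)$ and $\gcd(\alpha,\beta) = 1$ one gets $\beta \mid p$, hence $p \geq \beta$ because $p \geq 1$; by symmetry $\alpha \mid q$ and $q \geq \alpha$. Then $p\alpha + q\beta \geq 2\alpha\beta$, which contradicts $p\alpha + q\beta = \alpha\beta$ since $\alpha\beta \geq 1$. Therefore no monomial of $\mathcal{A}$-degree $b$ has support exactly $\{i,j\}$.

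I do not expect a real obstacle here: the argument is a one‑line piece of elementary number theory once the reduction to $\mathrm{lcm}(a_i,a_j)$ is made. The only point that needs care is that $\mathrm{supp}(\mathbf{x}^\mathbf{v}) = \{i,j\}$ forces \emph{both} exponents $p$ and $q$ to be strictly positive; this is exactly what turns the divisibility bounds $p\geq\beta$, $q\geq\alpha$ into the strict inequality that closes the argument. (If one of the exponents were allowed to vanish, the two endpoint monomials $x_i^{u_i(j)}$ and $x_j^{u_j(i)}$ of the circuit itself would be the exceptions, which is why the lemma is stated for support equal to $\{i,j\}$.)
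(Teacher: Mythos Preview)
Your proof is correct and follows essentially the same route as the paper's: both arguments reduce to the coprimality of $\alpha = a_i/\gcd(a_i,a_j)$ and $\beta = a_j/\gcd(a_i,a_j)$ to force $\beta \mid p$ and $\alpha \mid q$, and then conclude that no interior solution with $p,q\geq 1$ can exist. The paper phrases the final step as a case enumeration (either $v_i = u_i(j),\ v_j = 0$ or $v_i = 0,\ v_j = u_j(i)$) rather than your inequality $p\alpha + q\beta \geq 2\alpha\beta$, but the substance is identical.
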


\begin{proof}
Suppose to the contrary that there exists such a $\mathbf{v}$. Observe that $x_i^{u_i(j)} - x_j^{u_j(i)}$ is also a circuit of $I_{\{a_i/d, a_j/d\}},$ and $\mathbf{v} \in \deg^{-1}_{\{a_i/d,a_j/d\}}(b/d),$ with $d = \mathrm{gcd}(a_i, a_j)$. However, $\deg^{-1}_{\{a_i/d,a_j/d\}}(b/d) = \big\{x_i^{u_i(j)},  x_j^{u_j(i)} \big\}$ (see, for instance, \cite[Example 8.22]{GSRo}).
\end{proof}

\begin{theorem} \label{Indisp Circuits1}
Let $f = x_i^{u_i(j)} - x_j^{u_j(i)} \in I_\mathcal{A}$ be a circuit and let $b = u_i(j) a_i.$ Then, $f$ is indispensable of $I_\mathcal{A}$ if, and only if, $b-a_k \not\in \mathbb{N}\mathcal{A},$ for every $k \neq i, j.$ In particular, $u_i(j) = c_i$ and $u_j(i) = c_j.$
\end{theorem}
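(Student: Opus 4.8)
The plan is to characterize indispensability of the circuit $f = x_i^{u_i(j)} - x_j^{u_j(i)}$ via the graph criterion: by Theorem \ref{Thm CombIndBinJ} applied to $I_\mathcal{A}$ (together with Theorem \ref{Th OjVi1} and \cite[Corollary 7]{OjVi2}), $f$ is indispensable of $I_\mathcal{A}$ if and only if $G_b(I_\mathcal{A}) = \big\{\{x_i^{u_i(j)}\}, \{x_j^{u_j(i)}\}\big\}$, where $b = u_i(j)a_i$. So the task reduces to analyzing the fiber $\deg_\mathcal{A}^{-1}(b)$ and the edges of $G_b(I_\mathcal{A})$. The first step is to list the monomials of $\mathcal{A}$-degree $b$: by Lemma \ref{Lemma Circuits2} the only monomials supported on $\{i,j\}$ in this fiber are $x_i^{u_i(j)}$ and $x_j^{u_j(i)}$ themselves, so any further vertex of $G_b(I_\mathcal{A})$ must involve some variable $x_k$ with $k \neq i,j$; the existence of such a monomial $\mathbf{x}^\mathbf{v}$ with $x_k \mid \mathbf{x}^\mathbf{v}$ is exactly the condition $b - a_k \in \mathbb{N}\mathcal{A}$.

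For the ``if'' direction I would argue: suppose $b - a_k \notin \mathbb{N}\mathcal{A}$ for every $k \neq i,j$. Then every monomial in $\deg_\mathcal{A}^{-1}(b)$ is divisible by no $x_k$ with $k \neq i,j$, hence is supported on $\{i,j\}$, hence equals $x_i^{u_i(j)}$ or $x_j^{u_j(i)}$ by Lemma \ref{Lemma Circuits2}. Thus $G_b(I_\mathcal{A})$ has exactly these two vertices, and they are non-adjacent because $\gcd(x_i^{u_i(j)}, x_j^{u_j(i)}) = 1$ (condition (a) in the definition of $G_b$ fails). By Theorem \ref{Thm CombIndBinJ}, $f$ is indispensable. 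For the ``only if'' direction, suppose $b - a_k \in \mathbb{N}\mathcal{A}$ for some $k \neq i,j$; then there is a monomial $\mathbf{x}^\mathbf{v} = x_k \cdot \mathbf{x}^\mathbf{w}$ of $\mathcal{A}$-degree $b$, so $G_b(I_\mathcal{A})$ has a vertex other than $x_i^{u_i(j)}$ and $x_j^{u_j(i)}$. Since $I_\mathcal{A}$ is a toric ideal, $G_b(I_\mathcal{A})$ restricted to any fiber is connected after removing minimal generators — more precisely, by \cite[Corollary 7]{OjVi2} indispensability of $f$ forces $G_b(I_\mathcal{A})$ to have exactly two vertices, so the presence of a third vertex already contradicts indispensability. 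I should double-check that a monomial $x_i^{v_i}x_j^{v_j}x_k^{v_k}\cdots$ of degree $b$ with $v_k \geq 1$ genuinely lies in $M_{I_\mathcal{A}}$, i.e. that it is one end of a nonzero binomial of $I_\mathcal{A}$; this holds because it is not equal to $x_i^{u_i(j)}$ (it has a variable in its support outside $\{i\}$) yet has the same $\mathcal{A}$-degree, so $\mathbf{x}^\mathbf{v} - x_i^{u_i(j)} \in I_\mathcal{A}$ is nonzero.

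Finally, for the ``in particular'' claim that $u_i(j) = c_i$ and $u_j(i) = c_j$: if $f$ is indispensable then $x_i^{u_i(j)}$ is an indispensable monomial of $I_\mathcal{A}$, hence a minimal generator of $M_{I_\mathcal{A}}$ by Proposition \ref{Prop IndMonJ}. Since $u_i(j) a_i = b \in \mathbb{N}\mathcal{A}$ with $b = u_j(i) a_j \in \sum_{l \neq i}\mathbb{N}a_l$, we have $c_i \leq u_i(j)$. If $c_i < u_i(j)$, then $x_i^{c_i}$ divides $x_i^{u_i(j)}$ properly and $x_i^{c_i} \in M_{I_\mathcal{A}}$ (as $c_i a_i \in \sum_{l\neq i}\mathbb{N}a_l$ gives a nonzero binomial), contradicting minimality. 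Hence $u_i(j) = c_i$, and symmetrically $u_j(i) = c_j$.

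The main obstacle I anticipate is the precise bookkeeping in the ``only if'' direction: one must make sure the third vertex $\mathbf{x}^\mathbf{v}$ produced from $b - a_k \in \mathbb{N}\mathcal{A}$ is really a vertex of $G_b(I_\mathcal{A})$ (i.e. lies in $M_{I_\mathcal{A}}$, not just in $\deg_\mathcal{A}^{-1}(b)$) and then to invoke the correct characterization of indispensable binomials of a toric ideal from \cite{OjVi2}, rather than re-deriving it. Everything else is a routine combination of Lemma \ref{Lemma Circuits2}, Proposition \ref{Prop IndMonJ}, and Theorem \ref{Thm CombIndBinJ}.
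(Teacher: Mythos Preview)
Your proposal is correct and follows essentially the same route as the paper: both arguments reduce indispensability of $f$ to the statement that $\deg_\mathcal{A}^{-1}(b) = \{x_i^{u_i(j)}, x_j^{u_j(i)}\}$ (equivalently, that $G_b(I_\mathcal{A})$ consists of exactly these two singletons), and then use Lemma~\ref{Lemma Circuits2} to show that this happens precisely when no monomial in the fiber involves a variable $x_k$ with $k \neq i,j$, i.e.\ when $b - a_k \notin \mathbb{N}\mathcal{A}$ for all such $k$. Your write-up is more explicit than the paper's---in particular you spell out the ``in particular'' clause via Proposition~\ref{Prop IndMonJ}, which the paper leaves to the reader---but there is no genuine difference in strategy.
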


\begin{proof}
First of all, we observe that $\deg_\mathcal{A}^{-1}(b) \supseteq \big\{ x_i^{u_i(j)}, x_j^{u_j(i)} \big\}$ and equality holds if, and only if, $f$ is indispensable. So, the sufficiency condition follows. Conversely, since $b  \not\in \sum_{k \neq i, j} \mathbb{N} a_k,$ the supports of the monomials in $\deg_\mathcal{A}^{-1}(b)$ are included in $\{i,j\}$ and then, by Lemma \ref{Lemma Circuits2}, we are done. \end{proof}

Observe that from the above result it follows that \emph{if a circuit is indispensable, then it is a critical binomial}.

Let $\prec_{ij}$ be an $\mathcal{A}-$graded reverse lexicographical monomial order on $\mathbbmss{k}[\mathbf{x}]$ such that $x_k \prec_{ij} x_i$ and $x_k \prec_{ij} x_j$ for every $k \neq i,j.$

\begin{proposition} \label{Indisp Circuits2}
Let $f= x_i^{u_i(j)} - x_j^{u_j(i)} \in I_\mathcal{A}$ be a circuit. Then, $f$ is indispensable of $I_\mathcal{A}$ if, and only if, it belongs to the reduced Gr\"obner basis of $I_\mathcal{A}$ with respect to $\prec_{ij}.$
\end{proposition}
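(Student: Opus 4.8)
The plan is to play off two facts: Theorem~\ref{Indisp Circuits1}, which says $f$ is indispensable exactly when $b-a_k\notin\mathbb{N}\mathcal{A}$ for all $k\ne i,j$, where $b=u_i(j)a_i=u_j(i)a_j$; and the precise behaviour of the reverse‑lexicographic tie‑break of $\prec_{ij}$ on the fiber $\deg_\mathcal{A}^{-1}(b)$. Recall that $\prec_{ij}$ orders monomials first by $\mathcal{A}$‑degree and, inside a fixed $\mathcal{A}$‑degree, by the reverse‑lexicographic rule with respect to a variable ordering in which $x_i,x_j$ are the two largest variables (so $\mathbf{x}^\mathbf{u}\succ_{ij}\mathbf{x}^\mathbf{v}$ precisely when the last, i.e.\ smallest‑variable, nonzero entry of $\mathbf{u}-\mathbf{v}$ is negative); without loss of generality take $x_i$ to be the larger of the two, interchanging $i$ and $j$ if necessary. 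The one computation I will use is: \emph{if $\mathbf{x}^\mathbf{m}\in\deg_\mathcal{A}^{-1}(b)$ has some $x_k$ with $k\ne i,j$ in its support, then $\mathbf{x}^\mathbf{m}\prec_{ij}x_i^{u_i(j)}$ and $\mathbf{x}^\mathbf{m}\prec_{ij}x_j^{u_j(i)}$}. Indeed $\mathbf{m}$ and $u_j(i)\mathbf{e}_j$ differ in a variable $x_k$ with $k\ne i,j$, so the smallest variable in which they differ is one of these $x_k$, where the corresponding entry of $\mathbf{m}-u_j(i)\mathbf{e}_j$ equals $m_k>0$, and the reverse‑lexicographic rule then forces $\mathbf{x}^\mathbf{m}\prec_{ij}x_j^{u_j(i)}$ (and similarly for $x_i^{u_i(j)}$). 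The same rule gives $x_i^{u_i(j)}\succ_{ij}x_j^{u_j(i)}$, so $\mathrm{in}_{\prec_{ij}}(f)=x_i^{u_i(j)}$ with tail $x_j^{u_j(i)}$.

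\emph{Necessity.} Suppose $f$ is indispensable. By Theorem~\ref{Indisp Circuits1}, $u_i(j)=c_i$, $u_j(i)=c_j$ and $b-a_k\notin\mathbb{N}\mathcal{A}$ for every $k\ne i,j$; the latter says no monomial of $\mathcal{A}$‑degree $b$ involves any $x_k$ with $k\ne i,j$, and together with Lemma~\ref{Lemma Circuits2} (which rules out support exactly $\{i,j\}$) this yields $\deg_\mathcal{A}^{-1}(b)=\{x_i^{c_i},x_j^{c_j}\}$. Hence $x_j^{c_j}$ is the unique standard monomial of $\mathcal{A}$‑degree $b$, i.e.\ the normal form of the leading term $x_i^{c_i}$ of $f$. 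It remains to see that $x_i^{c_i}$ is a \emph{minimal} generator of $\mathrm{in}_{\prec_{ij}}(I_\mathcal{A})$: for $0\le a<c_i$ the minimality of $c_i$ forces $\deg_\mathcal{A}^{-1}(aa_i)=\{x_i^a\}$ (a different representative would have $x_i$‑exponent $a'<a$, so $(a-a')a_i\in\sum_{\ell\ne i}\mathbb{N}a_\ell$, contradicting the choice of $c_i$), whence every proper divisor $x_i^a$ of $x_i^{c_i}$ is standard. Therefore $f=x_i^{c_i}-x_j^{c_j}$ is exactly the element of the reduced Gr\"obner basis with leading term $x_i^{c_i}$; that is, $f$ (up to sign) belongs to it.

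\emph{Sufficiency.} Conversely, assume $f$ belongs to the reduced Gr\"obner basis of $I_\mathcal{A}$ with respect to $\prec_{ij}$. Then its tail $x_j^{u_j(i)}$ is a standard monomial, hence the unique standard monomial of $\mathcal{A}$‑degree $b$ and so the $\prec_{ij}$‑minimum of $\deg_\mathcal{A}^{-1}(b)$. If there were $k\ne i,j$ with $b-a_k\in\mathbb{N}\mathcal{A}$, then $\deg_\mathcal{A}^{-1}(b)$ would contain a monomial $\mathbf{x}^\mathbf{m}$ having $x_k$ in its support, and by the computation above $\mathbf{x}^\mathbf{m}\prec_{ij}x_j^{u_j(i)}$, contradicting the minimality of $x_j^{u_j(i)}$ in its fiber. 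Hence $b-a_k\notin\mathbb{N}\mathcal{A}$ for every $k\ne i,j$, and Theorem~\ref{Indisp Circuits1} gives that $f$ is indispensable.

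The only point that needs care is the bookkeeping with $\prec_{ij}$: one must use that it is the \emph{$\mathcal{A}$‑graded} reverse‑lexicographic order, so the decisive comparison inside a fixed fiber is the plain reverse‑lex rule on exponent vectors (not a total‑degree comparison), and one must check the trivial symmetry in $i$ and $j$ so that it is immaterial which of $x_i,x_j$ is the larger variable. Once this is in place the argument is a direct translation through Theorem~\ref{Indisp Circuits1} and Lemma~\ref{Lemma Circuits2}.
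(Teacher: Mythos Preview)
Your proof is correct, and the sufficiency direction is essentially the same as the paper's: both observe that a third monomial $\mathbf{x}^\mathbf{m}$ in the fiber must, by Lemma~\ref{Lemma Circuits2}, involve some $x_k$ with $k\ne i,j$, and then the reverse-lexicographic rule forces $\mathbf{x}^\mathbf{m}\prec_{ij}x_j^{u_j(i)}$ (the paper phrases this as $\mathrm{in}_{\prec_{ij}}(x_j^{u_j(i)}-\mathbf{x}^\mathbf{m})=x_j^{u_j(i)}$, which prevents the tail of $f$ from being reduced). The necessity direction, however, is genuinely different. The paper dispatches it in one line by invoking \cite[Theorem~13]{OjVi2}, which says that every indispensable binomial lies in \emph{every} reduced Gr\"obner basis of $I_\mathcal{A}$. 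You instead route through Theorem~\ref{Indisp Circuits1} to pin down the fiber as $\{x_i^{c_i},x_j^{c_j}\}$ and then verify by hand that $x_i^{c_i}$ is a minimal generator of $\mathrm{in}_{\prec_{ij}}(I_\mathcal{A})$ (via the minimality of $c_i$) with standard tail $x_j^{c_j}$. Your argument is longer but entirely self-contained within the paper, avoiding the external citation; the paper's is shorter but imports a general fact about indispensable binomials that is stronger than what is needed here.
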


\begin{proof}
If $f$ is indispensable, then, from Theorem 13 in \cite{OjVi2}, it belongs to every Gr\"obner basis of $I_\mathcal{A}.$ Now, suppose that $f$ belongs to the reduced Gr\"obner basis of $I_\mathcal{A}$ with respect to $\prec_{ij}$ and it is not indispensable. Since $f$ is not indispensable, there exists a monomial $\mathbf{x}^{\mathbf{u}}$ in the fiber of $u_i(j) a_i$ different from $x_i^{u_i(j)}$ and $x_j^{u_j(i)}.$ By Lemma \ref{Lemma Circuits2}, we have that $\mathrm{supp}(\mathbf{x}^{\mathbf{u}}) \not\subset \{i,j\},$ so there is $k \in \mathrm{supp}(\mathbf{x}^{\mathbf{u}})$ and $k \not\in \{i,j\}$. Hence, both $f_i = x_i^{u_i(j)} - \mathbf{x}^{\mathbf{u}}$ and $f_j = x_j^{u_j(i)} - \mathbf{x}^{\mathbf{u}}$ belong to $I_\mathcal{A}$. Since the leading terms of $f_i$ and $f_j$ with respect to $\prec_{ij}$ equal to $x_i^{u_i(j)}$ and $x_j^{u_j(i)}$, respectively, we conclude
that $f = x_i^{u_i(j)} - x_j^{u_j(i)} \in I_\mathcal{A}$ is not in the reduced Gr\"obner basis of $I_\mathcal{A}$ with respect to $\prec_{ij},$ a contradiction.
\end{proof}

\subsection{Primitive binomials}

\begin{definition}
A binomial $\mathbf{x}^\mathbf{u} - \mathbf{x}^\mathbf{v} \in I_\mathcal{A}$ is called \textbf{primitive} if there exists no other binomial $\mathbf{x}^{\mathbf{u}'} - \mathbf{x}^{\mathbf{v}'}$ such that $\mathbf{x}^{\mathbf{u}'}$ divides $\mathbf{x}^\mathbf{u}$ and
$\mathbf{x}^{\mathbf{v}'}$ divides $\mathbf{x}^\mathbf{v}.$ The set of all primitive binomials is called the Graver basis of $\mathcal{A}$ and it is denoted by $Gr(\mathcal{A}).$
\end{definition}

\begin{theorem}\label{Th indispensablebinom3}
Let $f=x_i^{u_i}x_j^{u_j}-x_k^{u_k}x_l^{u_l} \in Gr(\mathcal{A})$ be such that $u_i < c_i, u_j < c_j, u_k < c_k$ and $u_l < c_l$ with $i,j,k$ and $l$ pairwise different. Then $f$ is indispensable of $J = I_\mathcal{A} \cap \mathbbmss{k}[x_i,x_j,x_k,x_l].$
\end{theorem}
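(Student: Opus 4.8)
The plan is to apply the combinatorial criterion for indispensable binomials, Theorem \ref{Thm CombIndBinJ}, to the ideal $J = I_\mathcal{A} \cap \mathbbmss{k}[x_i,x_j,x_k,x_l]$. Set $\mathbf{b} = u_i\mathbf{a}_i + u_j\mathbf{a}_j = u_k\mathbf{a}_k + u_l\mathbf{a}_l$ (writing $\mathbf{a}_m$ for $a_m \in \mathbb{Z}$, so the grading is the natural $\mathcal{A}$-grading restricted to the four variables). It suffices to show that $G_\mathbf{b}(J) = \big\{\{x_i^{u_i}x_j^{u_j}\},\{x_k^{u_k}x_l^{u_l}\}\big\}$; that is, (1) these are the only two monomials of $\mathcal{A}$-degree $\mathbf{b}$ lying in $M_J$, and (2) they are not joined by an edge.

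For step (1), first observe that $f \in Gr(\mathcal{A})$ and the degree bounds $u_m < c_m$ already force the two displayed monomials into $M_J$, since $f$ is a nonzero element of $J$ (it lies in $I_\mathcal{A}$ and only involves the four variables $x_i,x_j,x_k,x_l$). Now suppose $\mathbf{x}^\mathbf{w}$ is any monomial in $\mathbbmss{k}[x_i,x_j,x_k,x_l]$ of $\mathcal{A}$-degree $\mathbf{b}$ with $\mathbf{x}^\mathbf{w} \in M_J$, so there is a nonzero binomial $\mathbf{x}^\mathbf{w} - \mathbf{x}^{\mathbf{w}'} \in J$. The strategy is to show $\mathbf{x}^\mathbf{w}$ must be one of the two monomials of $f$. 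Since $\mathbf{x}^\mathbf{w}$ has degree $\mathbf{b}$ and involves only four variables, we should be able to argue by a Graver/primitivity reduction: if $\mathbf{x}^\mathbf{w}$ were a third monomial in the fiber, then pairing it against $x_i^{u_i}x_j^{u_j}$ and against $x_k^{u_k}x_l^{u_l}$ produces binomials in $I_\mathcal{A}$, and a careful support and divisibility analysis — using that each exponent $u_m$ is strictly below the critical value $c_m$, hence that no single variable $x_m$ can carry the whole degree $\mathbf{b}$ on the side where it appears — combined with the primitivity of $f$ should contradict $f \in Gr(\mathcal{A})$ (some submonomial of $x_i^{u_i}x_j^{u_j}$ would divide one side and some submonomial of $x_k^{u_k}x_l^{u_l}$ the other). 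This is the technical heart of the argument and is where I expect to spend the most effort: controlling which variables can appear in a third fiber element and ruling each configuration out via the strict inequalities $u_m < c_m$ and primitivity of $f$.

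For step (2), suppose the two monomials $x_i^{u_i}x_j^{u_j}$ and $x_k^{u_k}x_l^{u_l}$ were joined by an edge of $G_\mathbf{b}(J)$. Condition (a) requires $\gcd \neq 1$, but $\{i,j\}$ and $\{k,l\}$ are disjoint, so the gcd is $1$ — contradiction immediately. Hence there is no edge, and by step (1) the graph has exactly the two isolated vertices, so Theorem \ref{Thm CombIndBinJ} gives that $f$ is an indispensable binomial of $J$. I would also double-check the harmless point that $\mathbf{b} \neq \mathbf{0}$ (true since $f \neq 0$ and $\mathbb{N}\mathcal{A}$ is free of units) so that $G_\mathbf{b}(J)$ is genuinely nonempty, and that the restricted configuration $\{a_i,a_j,a_k,a_l\}$ still gives a monoid free of units, so the hypotheses under which $G_\mathbf{b}(J)$ was defined apply to $J$ inside $\mathbbmss{k}[x_i,x_j,x_k,x_l]$.
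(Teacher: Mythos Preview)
Your plan is correct and matches the paper's approach: the paper also reduces to the four-variable situation (via \cite[Proposition 4.13(a)]{Sturmfels95}, identifying $J$ with the toric ideal of $\{a_i,a_j,a_k,a_l\}$), shows the fiber $\deg_\mathcal{A}^{-1}(b)$ contains only the two monomials of $f$, and then invokes Theorem \ref{Thm CombIndBinJ}. The precise form your ``support and divisibility analysis'' takes in the paper is: for a hypothetical third fiber element $\mathbf{x}^\mathbf{v}$, one shows $v_m \le u_m$ for all four indices (each strict inequality $u_m < c_m$ rules out $v_m > u_m$ by cancelling the common $x_m$-power and contradicting the minimality of $c_{m'}$ for the partner variable), and then $x_i^{u_i-v_i}x_j^{u_j-v_j} - x_k^{v_k}x_l^{v_l} \in I_\mathcal{A}$ contradicts primitivity of $f$.
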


\begin{proof}
By \cite[Proposition 4.13(a)]{Sturmfels95}, $J = I_\mathcal{A} \cap \mathbbmss{k}[x_i,x_j,x_k,x_l]$
is the toric ideal associated to $\mathcal{A}' = \{a_i, a_j, a_k, a_l\}.$ Thus, without loss of generality we may
assume $n=4,$ then $J = I_\mathcal{A}.$ We prove that $G_b(I_\mathcal{A}) = \{x_i^{u_i}x_j^{u_j}, x_k^{u_k}x_l^{u_l} \}$, where $b = u_i a_i + u_j a_j.$ Let
$\mathbf{x}^\mathbf{v} \in \deg_\mathcal{A}^{-1}(b)$ be different from $x_i^{u_i}x_j^{u_j}$ and
$x_k^{u_l}x_l^{u_l}.$ If $u_i < v_i,$ then $x_i^{u_i}(x_j^{u_j} - x_i^{v_i - u_i} x_j^{v_j} x_k^{v_k} x_l^{v_l})
\in I_\mathcal{A},$ thus $x_j^{u_j} - x_i^{v_i - u_i} x_j^{v_j} x_k^{v_k} x_l^{v_l} \in I_\mathcal{A}$ which is impossible by the minimality of $c_j$ (see Proposition \ref{Prop critical0}). Analogously, we can prove that $u_j \geq
v_j, u_k \geq v_k$ and $u_l \geq v_l.$ Therefore $x_i^{v_i} x_j^{v_j} (x_i^{u_i-v_i}x_j^{u_j-v_j} - x_k^{v_k}
x_l^{v_l}) \in I_\mathcal{A}$ and so $x_i^{u_i-v_i}x_j^{u_j-v_j} - x_k^{v_k} x_l^{v_l} \in I_\mathcal{A},$ a
contradiction with the fact that $f$ is primitive. This shows that $G_b(J) = \big\{ \{x_i^{u_i}x_j^{u_j}\}, \{ x_k^{u_k}x_l^{u_l} \} \big\}$ and, by Theorem \ref{Thm CombIndBinJ}, we
are done. \end{proof}

\begin{corollary}\label{Cor indispensablebinom3}
Let $f=x_i^{u_i}x_j^{u_j}-x_k^{u_k}x_l^{u_l} \in I_\mathcal{A}$ be such that $u_i < c_i$, $u_j < c_j$, $u_k>0$ and $u_l>0$ with $i,j,k$ and $l$ pairwise different. If $x_k^{u_k}x_l^{u_l}$ is indispensable of $J = I_\mathcal{A} \cap \mathbbmss{k}[x_i,x_j,x_k,x_l],$ then $f$ is indispensable of $J.$
\end{corollary}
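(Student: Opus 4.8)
The plan is to reduce to the four-variable toric ideal exactly as in the proof of Theorem \ref{Th indispensablebinom3}, and then combine the hypothesis on $x_k^{u_k}x_l^{u_l}$ with Proposition \ref{Prop critical0} to show that the bidegree fiber of $f$ contains only the two monomials $x_i^{u_i}x_j^{u_j}$ and $x_k^{u_k}x_l^{u_l}$, so that $G_b(J)$ consists of exactly these two isolated vertices and Theorem \ref{Thm CombIndBinJ} applies. First I would invoke \cite[Proposition 4.13(a)]{Sturmfels95} to write $J = I_\mathcal{A} \cap \mathbbmss{k}[x_i,x_j,x_k,x_l]$ as the toric ideal of $\mathcal{A}' = \{a_i,a_j,a_k,a_l\}$, so that we may assume $n=4$ and $J = I_\mathcal{A}$. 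Set $b = \deg_\mathcal{A}(x_i^{u_i}x_j^{u_j}) = \deg_\mathcal{A}(x_k^{u_k}x_l^{u_l})$.

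The core of the argument is to show $\{x_i^{u_i}x_j^{u_j}\}$ is a connected component of $G_b(J)$. I would take any monomial $\mathbf{x}^\mathbf{v} \in \deg_\mathcal{A}^{-1}(b)$ distinct from the two given ones and show that there is no edge from $x_i^{u_i}x_j^{u_j}$ to $\mathbf{x}^\mathbf{v}$. If $v_i > u_i$, then factoring out $x_i^{u_i}$ yields $x_j^{u_j} - x_i^{v_i-u_i}x_j^{v_j}x_k^{v_k}x_l^{v_l} \in I_\mathcal{A}$, which contradicts the minimality of $c_j$ as in Proposition \ref{Prop critical0} since $u_j < c_j$; hence $v_i \le u_i$ and symmetrically $v_j \le u_j$. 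If $v_i < u_i$ or $v_j < u_j$, then cancelling $x_i^{v_i}x_j^{v_j}$ gives the binomial $x_i^{u_i-v_i}x_j^{u_j-v_j} - x_k^{v_k}x_l^{v_l} \in I_\mathcal{A}$ of $\mathcal{A}$-degree $b' < b$; since its second monomial $x_k^{v_k}x_l^{v_l}$ has $\mathcal{A}$-degree $b'$ and divides a monomial of degree $b$, I would use that this smaller binomial, together with the indispensability of $x_k^{u_k}x_l^{u_l}$ in $J$ (via Proposition \ref{Prop IndMonJ}, i.e. $x_k^{u_k}x_l^{u_l}$ minimally generates $M_J$), forces $v_k = v_l = 0$ — but then $\mathbf{x}^\mathbf{v}$ is supported on $\{i,j\}$ with degree $b$, contradicting that the only such monomial of degree $b$ is $x_i^{u_i}x_j^{u_j}$ (by the $c_j$-minimality argument applied once more, or directly). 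So $v_i = u_i$, $v_j = u_j$, forcing $\mathbf{x}^\mathbf{v} = x_i^{u_i}x_j^{u_j}$, a contradiction.

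It remains to treat $x_k^{u_k}x_l^{u_l}$: but this monomial is indispensable of $J$ by hypothesis, hence $\{x_k^{u_k}x_l^{u_l}\}$ is a connected component of $G_b(J)$ by Theorem \ref{Thm CombIndMonJ}. Since $\deg_\mathcal{A}(x_i^{u_i}x_j^{u_j}) = \deg_\mathcal{A}(x_k^{u_k}x_l^{u_l}) = b$, both vertices lie in $G_b(J)$, and we have just shown each is its own component; moreover any third vertex $\mathbf{x}^\mathbf{v}$ of $G_b(J)$ would be a monomial of $M_J$ of degree $b$ distinct from both, which the argument of the previous paragraph (run symmetrically from each of the two given monomials) rules out. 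Therefore $G_b(J) = \big\{ \{x_i^{u_i}x_j^{u_j}\}, \{x_k^{u_k}x_l^{u_l}\} \big\}$, and Theorem \ref{Thm CombIndBinJ} gives that $f = x_i^{u_i}x_j^{u_j} - x_k^{u_k}x_l^{u_l}$ is indispensable of $J$.

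I expect the main obstacle to be the bookkeeping in the case $v_i < u_i$ (or $v_j < u_j$): here, unlike in Theorem \ref{Th indispensablebinom3}, we cannot appeal to primitivity of $f$ (which is not assumed), and must instead extract from the indispensability of the single monomial $x_k^{u_k}x_l^{u_l}$ enough rigidity to pin down the fiber. The delicate point is confirming that a binomial of strictly smaller $\mathcal{A}$-degree whose monomials divide monomials in degree $b$ cannot coexist with the minimal generation of $M_J$ by $x_k^{u_k}x_l^{u_l}$ — essentially that $x_k^{u_k}x_l^{u_l}$ being a minimal generator of $M_J$ propagates downward. Once that is nailed down, the rest is a routine assembly via Theorems \ref{Thm CombIndMonJ} and \ref{Thm CombIndBinJ}.
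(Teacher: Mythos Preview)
Your overall strategy matches the paper's: reduce to $n=4$, show $G_b(J)=\big\{\{x_i^{u_i}x_j^{u_j}\},\{x_k^{u_k}x_l^{u_l}\}\big\}$, and invoke Theorem~\ref{Thm CombIndBinJ}. The inequalities $v_i\le u_i$ and $v_j\le u_j$ via the minimality of $c_i,c_j$ are exactly as in the paper's proof of Theorem~\ref{Th indispensablebinom3}.

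The gap is precisely the step you flag as delicate. From the binomial $x_i^{u_i-v_i}x_j^{u_j-v_j}-x_k^{v_k}x_l^{v_l}\in I_\mathcal{A}$ you want to conclude $v_k=v_l=0$ using that $x_k^{u_k}x_l^{u_l}$ is a minimal generator of $M_J$. But knowing $x_k^{v_k}x_l^{v_l}\in M_J$ contradicts nothing unless $x_k^{v_k}x_l^{v_l}$ properly divides $x_k^{u_k}x_l^{u_l}$, and you have not established $v_k\le u_k,\ v_l\le u_l$; without the hypotheses $u_k<c_k,\ u_l<c_l$ of Theorem~\ref{Th indispensablebinom3} you cannot run that bound, so for instance $v_k>u_k,\ v_l<u_l$ is not excluded by your argument. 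For the same reason, the phrase ``run symmetrically from each of the two given monomials'' in your last paragraph does not go through on the $(k,l)$ side.

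The paper repairs this by reversing the order: it \emph{first} uses Theorem~\ref{Thm CombIndMonJ} to note that $\{x_k^{u_k}x_l^{u_l}\}$ is a connected component of $G_b(I_\mathcal{A})$, hence any $\mathbf{x}^\mathbf{v}$ in the fiber with $v_k>0$ or $v_l>0$ would share a variable with $x_k^{u_k}x_l^{u_l}$ and thus be joined to it by an edge (condition (b) is automatic for $I_\mathcal{A}$), a contradiction. So $\mathrm{supp}(\mathbf{x}^\mathbf{v})\subseteq\{i,j\}$. Then your inequalities $v_i\le u_i,\ v_j\le u_j$ combined with $v_ia_i+v_ja_j=u_ia_i+u_ja_j$ force $v_i=u_i,\ v_j=u_j$, so there is no third vertex. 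If you prefer to phrase it via $M_J$: look at the binomial $x_k^{u_k}x_l^{u_l}-\mathbf{x}^\mathbf{v}\in I_\mathcal{A}$ instead of the one you wrote down; if $v_k>0$, cancel $x_k$ to get $x_k^{u_k-1}x_l^{u_l}\in M_J$, contradicting minimality of $x_k^{u_k}x_l^{u_l}$.
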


\begin{proof}
Since, by Theorem \ref{Thm CombIndMonJ}, $\{x_k^{u_k}x_l^{u_l}\}$ is a connected component of $G_b(I_\mathcal{A}),$ where $b = u_k a_k + u_l a_l,$ the monomial $\mathbf{x}^\mathbf{v} \in \deg_\mathcal{A}^{-1}(b)$ in the above proof has its support in $\{i,j\}.$ Thus, repeating the arguments of the proof of Theorem \ref{Th indispensablebinom3}, we deduce that $u_i \geq
v_i$ and $u_j \geq
v_j$. But $x_i^{u_i}x_j^{u_j}-x_i^{v_i}x_j^{v_j} \in I_\mathcal{A}$, so $u_ia_i+u_ja_j=v_ia_i+v_ja_j$ which implies that $u_i=v_i$ and $u_j=v_j$. By Theorem \ref{Thm CombIndBinJ} we have that $f$ is indispensable of $J$.
\end{proof}

Combining Theorem \ref{Th indispensablebinom3} with Corollary \ref{Cor CombIndBinJ2} we get the following corollary.
\begin{corollary}
Given $i,j,k$ and $l \in \{1, \ldots, n\}$ pairwise different, let $J$ be the ideal of $\mathbbmss{k}[x_i,x_j,x_k,x_l]$ generated by all Graver binomials of $I_\mathcal{A}$ of the form $x_i^{u_i}x_j^{u_j}-x_k^{u_k}x_l^{u_l}$ with $u_i < c_i, u_j < c_j, u_k < c_k$ and $u_l < c_l.$ Then $J$ has unique minimal system of binomial generators.
\end{corollary}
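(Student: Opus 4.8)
The statement asserts that if $J \subseteq \mathbbmss{k}[x_i, x_j, x_k, x_l]$ is generated by all Graver binomials of $I_\mathcal{A}$ of the form $x_i^{u_i}x_j^{u_j} - x_k^{u_k}x_l^{u_l}$ with all four exponents strictly below the corresponding $c$-value, then $J$ has a unique minimal system of binomial generators. The natural route is: first reduce to showing that each generator in this prescribed list is an indispensable binomial of $J$, and then invoke the standard fact that an ideal generated by indispensable binomials has a unique minimal system of binomial generators (this is the content implicitly used after Corollary \ref{Cor CombIndBinJ2}: if every binomial in \emph{some} system of binomial generators is indispensable, that system is the unique minimal one, since any other minimal system must contain each indispensable binomial up to sign, hence contains this whole set, hence by minimality equals it).

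So the crux is: for each $f = x_i^{u_i}x_j^{u_j} - x_k^{u_k}x_l^{u_l}$ in the generating list, show $f$ is indispensable of $J$. The hint in the excerpt is explicit — ``Combining Theorem \ref{Th indispensablebinom3} with Corollary \ref{Cor CombIndBinJ2}''. Here is how I would assemble these. Set $\mathcal{A}' = \{a_i, a_j, a_k, a_l\}$; by \cite[Proposition 4.13(a)]{Sturmfels95}, $I_{\mathcal{A}'} = I_\mathcal{A} \cap \mathbbmss{k}[x_i,x_j,x_k,x_l]$, and note $f \in Gr(\mathcal{A})$ restricts to a primitive binomial of $\mathcal{A}'$ (a Graver element of $I_\mathcal{A}$ supported on $\{i,j,k,l\}$ is a Graver element of the sub-configuration). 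One subtlety: the constants $c_i, c_j, c_k, c_l$ in the hypothesis are those of $\mathcal{A}$, i.e. $c_i$ is least with $c_i a_i \in \sum_{m\neq i}\mathbb{N}a_m$ over \emph{all} of $\mathcal{A}$, whereas Theorem \ref{Th indispensablebinom3} as stated, after reducing to $n=4$, uses the $c$-values of $\mathcal{A}'$; but the least $m$ with $m a_i \in \mathbb{N}a_j + \mathbb{N}a_k + \mathbb{N}a_l$ is $\geq c_i$ because the latter semigroup is contained in $\sum_{m \neq i}\mathbb{N}a_m$. Hence $u_i < c_i$ forces $u_i$ strictly below the $\mathcal{A}'$-critical value as well, and similarly for the other three indices — so the hypothesis of Theorem \ref{Th indispensablebinom3} is met for $\mathcal{A}'$. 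I would state this comparison explicitly as it is the one place the reader could stumble. Theorem \ref{Th indispensablebinom3} then gives that $f$ is indispensable of $I_\mathcal{A} \cap \mathbbmss{k}[x_i,x_j,x_k,x_l] = J$ directly — in fact $J$ here \emph{is} the ambient $J$ of that theorem. Wait: not quite — the $J$ in Theorem \ref{Th indispensablebinom3} is the full toric ideal $I_{\mathcal{A}'}$, whereas our $J$ is only the subideal generated by the listed Graver binomials. This is exactly where Corollary \ref{Cor CombIndBinJ2} enters: $f$ is an indispensable binomial of $I_{\mathcal{A}'}$, hence of \emph{any} $\mathcal{A}'$-homogeneous pure difference binomial subideal containing it — and our $J$ is such a subideal (it is $\mathcal{A}'$-homogeneous since generated by binomials of $I_{\mathcal{A}'}$, and it contains $f$ by construction). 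Therefore $f$ is indispensable of $J$.

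Having shown every generator in the defining list is indispensable of $J$, I conclude: let $\{g_1, \ldots, g_r\}$ be any minimal system of binomial generators of $J$; each $f$ in the list, being indispensable, equals some $\pm g_t$; so the list is (up to sign) contained in $\{g_1,\ldots,g_r\}$; but the list already generates $J$, so by minimality $\{g_1,\ldots,g_r\}$ coincides with the list up to sign, proving uniqueness. The main obstacle I anticipate is purely bookkeeping: making sure the passage between the $c$-values of $\mathcal{A}$ and of the four-element sub-configuration $\mathcal{A}'$ is handled correctly (the inequality goes the right way, so the strict bounds are preserved), and confirming that our sub-ideal $J$ genuinely satisfies the hypotheses needed to apply Corollary \ref{Cor CombIndBinJ2} — namely $\mathcal{A}'$-homogeneity and that the relevant monoid $\mathbb{N}\mathcal{A}'$ is free of units, which holds because the $a_m$ are positive integers. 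No hard computation is involved; the argument is a short chain of the already-established results.
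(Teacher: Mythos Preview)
Your proposal is correct and follows exactly the route the paper indicates: apply Theorem~\ref{Th indispensablebinom3} to get that each listed Graver binomial is indispensable of the toric ideal $I_{\mathcal{A}'} = I_\mathcal{A} \cap \mathbbmss{k}[x_i,x_j,x_k,x_l]$, then apply Corollary~\ref{Cor CombIndBinJ2} to transfer indispensability down to the subideal $J$, and conclude uniqueness. One small simplification: your careful comparison of the $c$-values for $\mathcal{A}$ versus $\mathcal{A}'$ is unnecessary, because Theorem~\ref{Th indispensablebinom3} is already stated for arbitrary $n$ using the $c$-values of the full configuration $\mathcal{A}$ and already concludes indispensability in $I_\mathcal{A} \cap \mathbbmss{k}[x_i,x_j,x_k,x_l]$ directly --- the passage to $n=4$ happens inside its proof, not in its hypotheses --- so you can invoke it as a black box without reopening that reduction.
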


Finally we provide another class of primitive binomials that are indispensable of a toric ideal.

\begin{corollary}
Let $f = x_i^{u_i}x_j^{u_j} - x_k^{u_k}x_l^{u_l} \in Gr(\mathcal{A})$ such that $0 < u_i < c_i$ and $0 < u_k < c_k,$ for $i, j, k$ and $l$ pairwise different. If $u_i a_i + u_j a_j$ is minimal among all Graver $\mathcal{A}-$degrees, then $f$ is indispensable of $I_\mathcal{A} \cap \mathbbmss{k}[x_i, x_j, x_l, x_k].$
\end{corollary}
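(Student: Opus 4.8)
The plan is to deduce from the minimality hypothesis that $u_j < c_j$ and $u_l < c_l$; granting this, $f$ meets every hypothesis of Theorem \ref{Th indispensablebinom3}, and the statement follows at once from that theorem.

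Set $b = u_i a_i + u_j a_j$; since $f \in I_\mathcal{A}$ is $\mathcal{A}$-homogeneous, also $b = u_k a_k + u_l a_l.$ Suppose, for contradiction, that $u_j \geq c_j.$ By the definition of $c_j$ there is a nonzero binomial $x_j^{c_j} - \mathbf{x}^\mathbf{w} \in I_\mathcal{A}$ with $x_j \nmid \mathbf{x}^\mathbf{w}$, hence irreducible. An immediate consequence of the definition of primitivity (cf.\ \cite[Chapter 4]{Sturmfels95}) is that this binomial is, on each side, coordinatewise divisible by a primitive one: there is $g = \mathbf{x}^\mathbf{p} - \mathbf{x}^\mathbf{q} \in Gr(\mathcal{A})$ with $\mathbf{x}^\mathbf{p}$ dividing $x_j^{c_j}$ and $\mathbf{x}^\mathbf{q}$ dividing $\mathbf{x}^\mathbf{w}.$ Then $\mathbf{x}^\mathbf{p} = x_j^{p_j}$ for some $p_j \leq c_j$, so $\deg_\mathcal{A}(g) = p_j a_j$, and therefore
$$b - \deg_\mathcal{A}(g) = u_i a_i + (u_j - p_j)a_j$$
is a nonzero element of $\mathbb{N}a_i + \mathbb{N}a_j \subseteq \mathbb{N}\mathcal{A}$: it is nonzero because $u_i > 0$, and it lies in $\mathbb{N}a_i + \mathbb{N}a_j$ because $u_j - p_j \geq c_j - p_j \geq 0.$ Hence $g$ is a Graver binomial whose $\mathcal{A}$-degree is strictly below $b$, contradicting the minimality of $b$ among all Graver $\mathcal{A}$-degrees. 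Therefore $u_j < c_j$, and the same reasoning applied to $x_l^{c_l}$, using the identity $b = u_k a_k + u_l a_l$ and the hypothesis $u_k > 0$, gives $u_l < c_l.$

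With $u_i < c_i$, $u_j < c_j$, $u_k < c_k$, $u_l < c_l$ and $i, j, k, l$ pairwise distinct, Theorem \ref{Th indispensablebinom3} applies verbatim to $f$ and shows that $f$ is indispensable of $I_\mathcal{A} \cap \mathbbmss{k}[x_i, x_j, x_k, x_l].$ I expect the only point needing a little attention to be the reduction of $x_j^{c_j} - \mathbf{x}^\mathbf{w}$ to a primitive sub-binomial, together with the check that the degree drop $b - \deg_\mathcal{A}(g)$ genuinely lies in $\mathbb{N}\mathcal{A}$; that check is precisely what makes the argument robust to whether ``minimal among all Graver $\mathcal{A}$-degrees'' is read in the integers or in the natural partial order of the monoid $\mathbb{N}\mathcal{A}.$ Everything else is routine.
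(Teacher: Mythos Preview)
Your proof is correct and follows essentially the same route as the paper's: both deduce $u_j < c_j$ and $u_l < c_l$ from the minimality hypothesis and then invoke Theorem \ref{Th indispensablebinom3}. The paper is slightly more direct, simply observing that $c_j a_j$ is itself a Graver $\mathcal{A}$-degree (every critical binomial is primitive), so minimality forces $u_i a_i + u_j a_j \leq c_j a_j$ and hence $u_j < c_j$; your detour through a primitive sub-binomial of $x_j^{c_j} - \mathbf{x}^\mathbf{w}$ achieves the same thing but is a little roundabout.
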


\begin{proof}
Since $c_j a_j$ is a Graver $\mathcal{A}-$degree, we have $u_i a_i + u_j a_j \leq c_j a_j,$ so it follows $u_j < c_j.$ Similarly, we can prove $u_l < c_l.$ Therefore, by Theorem \ref{Th indispensablebinom3}, we conclude that $f$ is indispensable of $I_\mathcal{A} \cap \mathbbmss{k}[x_i, x_j, x_l, x_k].$
\end{proof}

It is worth to noting here that \cite[Theorem 6]{RGS11} offers a characterization of the family of affine semigroups for which $C_\mathcal{A} = Gr(\mathcal{A})$.

\section{Classification of monomial curves in $\mathbb{A}^4(\mathbbmss{k})$}

Let $\mathcal{A} = \{a_1, a_2, a_3, a_4\}$ be a set of relatively prime positive integers. First we will provide a minimal system of binomial generators for the critical ideal $C_\mathcal{A}$. This will be done by comparing the $\mathcal{A}$-degrees of the monomials $x_i^{c_i}$, for $i=1,\ldots,4$.

\begin{lemma} \label{Lemma Critical1}
Let $f_i = x_i^{c_i} - \prod_{j \neq i}x_j^{u_{ij}},\ i = 1, \ldots, 4,$ be a set of critical binomials of $I_\mathcal{A}$ and let $g_l \in I_\mathcal{A}$ be a critical binomial with respect  to $x_l$, for some $l \in \{1, \ldots, 4\}$. If $f_l \neq - f_i$ for every $i,$ then $g_l \in \langle f_1, f_2, f_3, f_4 \rangle.$
\end{lemma}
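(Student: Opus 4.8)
The plan is to show that any critical binomial $g_l$ with respect to $x_l$ already lies in the ideal generated by the four chosen critical binomials $f_1, f_2, f_3, f_4$. Write $g_l = x_l^{c_l} - \prod_{j \neq l} x_j^{v_{lj}}$ and $f_l = x_l^{c_l} - \prod_{j \neq l} x_j^{u_{lj}}$; both have the same leading power $x_l^{c_l}$ by the very definition of $c_l$, so $g_l - f_l = \prod_{j\neq l} x_j^{u_{lj}} - \prod_{j\neq l} x_j^{v_{lj}} \in I_\mathcal{A}$ is a binomial supported on $\{1,2,3,4\}\setminus\{l\}$, i.e. on at most three variables. Hence it suffices to prove that every binomial of $I_\mathcal{A}$ supported on three of the four variables lies in $\langle f_1, f_2, f_3, f_4\rangle$: this reduces the statement to a three-variable problem, where we may invoke Herzog's description of the defining ideal of a monomial space curve as being generated by its critical binomials.

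First I would reduce the binomial $h := g_l - f_l$ to a difference of pure monomials in the three variables $x_i, x_j, x_k$ (the indices other than $l$), and — after cancelling the gcd — assume it has disjoint support. Then $h$ (or its restriction modulo the variable not occurring) is a binomial in the toric ideal $I_{\mathcal{A}'}$ of a monomial space curve $\mathcal{A}' \subseteq \{a_i,a_j,a_k\}$. By Herzog's theorem this ideal is generated by critical binomials of $I_{\mathcal{A}'}$; the key point is that each of the three variables appearing in a nontrivial such critical binomial has exponent at most the corresponding $c$-value for the \emph{full} set $\mathcal{A}$ — indeed the minimal exponent needed in a subset is at least the one needed in $\mathcal{A}$, but actually what we need is that a critical binomial of $I_{\mathcal{A}'}$ with respect to $x_m$ (for $m \in \{i,j,k\}$) has the form $x_m^{c'_m} - (\text{monomial})$ with $c'_m = c_m$, because the hypothesis $f_l \neq -f_i$ for all $i$ guarantees none of the $x_i^{c_i}$ lies in the subring $\mathbbmss{k}[x_j : j\neq i]$ restricted appropriately, so the relevant critical exponents coincide. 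Each such critical binomial of $I_{\mathcal{A}'}$ is then expressible via the $f_m$'s: if $x_m^{c_m} - \mathbf{x}^{\mathbf{w}} \in I_\mathcal{A}$, then it differs from $f_m$ by a binomial in fewer variables, and one repeats the argument, the induction terminating because the number of variables in the support strictly decreases (a binomial in $I_\mathcal{A}$ supported on a single variable must be zero, since the $a_i$ are positive).

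The main obstacle I anticipate is bookkeeping the reduction so that it genuinely terminates and so that the exponents stay below the $c_i$: one must be careful that after subtracting a multiple of some $f_m$, the remaining binomial is still $\mathcal{A}$-homogeneous, still supported on a strictly smaller set of variables, and that the monomial multiplier used is honest (a monomial, not a general polynomial). The cleanest way to organize this is an induction on $\sum_j w_j$ (or on the size of the support together with the total degree), peeling off one $f_m$ at a time: whenever the current binomial $\mathbf{x}^\mathbf{p} - \mathbf{x}^\mathbf{q}$ has $p_m \geq c_m$ for some $m$ in its support, write $\mathbf{x}^\mathbf{p} - \mathbf{x}^\mathbf{q} = \mathbf{x}^{\mathbf{p}-c_m \mathbf{e}_m}\,f_m + \big(\mathbf{x}^{\mathbf{p}-c_m\mathbf{e}_m}\prod_{j\neq m}x_j^{u_{mj}} - \mathbf{x}^\mathbf{q}\big)$ and recurse on the second summand, which has smaller $x_m$-degree; when every exponent is $< c_m$, the surviving binomial is a Graver-type relation among three variables with all exponents below the critical values, and such a relation in three variables is forced to be trivial (this is exactly the content that makes Herzog's space-curve ideal equal to its critical ideal). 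Assembling these pieces yields $g_l = f_l + h \in \langle f_1, f_2, f_3, f_4\rangle$, as desired.
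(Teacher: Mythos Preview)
Your overall strategy---subtract $f_l$ from $g_l$ to get a binomial in the three variables $\{x_j : j \neq l\}$, and then express this via the space-curve ideal $I_\mathcal{A} \cap \mathbbmss{k}[x_j : j \neq l]$---is exactly the skeleton of the paper's proof. The gap is in how you connect that space-curve ideal to the given $f_i$'s.

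Your recursion in the last paragraph writes
\[
\mathbf{x}^\mathbf{p} - \mathbf{x}^\mathbf{q}
= \mathbf{x}^{\mathbf{p}-c_m \mathbf{e}_m}\,f_m
+ \Big(\mathbf{x}^{\mathbf{p}-c_m\mathbf{e}_m}\!\!\prod_{j\neq m}x_j^{u_{mj}} - \mathbf{x}^\mathbf{q}\Big),
\]
but the product $\prod_{j\neq m}x_j^{u_{mj}}$ includes $x_l^{u_{ml}}$, and nothing you have said forces $u_{ml}=0$. So the remainder may leave $\mathbbmss{k}[x_j : j \neq l]$, and the induction on support size collapses. The same problem undermines your first approach: even if Herzog's theorem gives generators for $I_{\mathcal{A}'}$, you still have to write those generators in terms of the chosen $f_i$, and again this requires $u_{il}=0$. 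Your sentence ``the hypothesis $f_l \neq -f_i$ for all $i$ guarantees none of the $x_i^{c_i}$ lies in the subring $\mathbbmss{k}[x_j : j \neq i]$ restricted appropriately, so the relevant critical exponents coincide'' does not say this, and in fact the implication $f_l \neq -f_i \Rightarrow u_{il}=0$ is \emph{false} in general: for $\mathcal{A}=\{5,6,7,8\}$ one may take $f_1=x_1^3-x_3x_4$ and $f_2=x_2^2-x_1x_3$, so $f_1 \neq -f_2$ while $u_{21}=1$.

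What the paper does is more delicate: it runs the division process concretely and, \emph{each time} an $f_i$ is about to be used, it produces from the current data a binomial of the form $x_l^{c_l} - x_l^{u_{il}}\cdot(\text{monomial})$ in $I_\mathcal{A}$. The hypothesis $f_l \neq -f_i$ rules out the degenerate case in which this binomial is zero, and then minimality of $c_l$ forces $u_{il}=0$. Thus $u_{il}=0$ is not assumed up front but is established on the fly, only for those $f_i$ that the division actually touches; once all three are shown to lie in $\mathbbmss{k}[x_j : j \neq l]$, the Herzog/space-curve generation finishes the argument exactly as you intended. The missing idea in your write-up is precisely this mechanism for extracting $u_{il}=0$ from the hypothesis.
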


\begin{proof}
For simplicity we assume $l= 1.$ Let $g_1 = x_1^{c_1} - x_2^{v_2} x_3^{v_3} x_4^{v_4} \in I_\mathcal{A}$ be a  critical binomial. If $g_1 = f_1,$ there is nothing to prove. If $g_1 \neq f_1,$ without loss of generality we may  assume that $u_{12} > v_2, u_{13} \leq v_3$ and $u_{14} \leq v_4,$ so $g_1 - f_1=m_1g_2,$ with $m_1 =  x_2^{v_2} x_3^{u_{13}} x_4^{u_{14}}$ and $g_2 = x_2^{u_{12}-v_2} - x_3^{v_3-u_{13}} x_4^{v_4 - u_{14}} \in I_\mathcal{A}$  (in particular $u_{12}-v_2 \geq c_2$). But $x_1^{c_1} - x_1^{u_{21}}  x_2^{u_{12} -  c_2} x_3^{u_{13}+u_{23}} x_4^{u_{14}+u_{24}}
\in I_\mathcal{A}$ and also $f_1 \neq - f_2,$ thus from the minimality of $c_1$ it follows that $u_{21} = 0,$ that is to say, $f_2 \in \mathbbmss{k}[x_2, x_3, x_4]$.
For the sake of simplicity, write $g_2 = x_2^b - x_3^c x_4^d$ with $b,c,d \in \mathbb{N}$ and $b \geq c_2$. Hence $g_2 - x_2^{b-c_2} f_2 = x_2^{b-c_2} x_3^ {u_{23}} x_4^{u_{24}} - x_3^c x_4^d$. If $b - c_2 \geq c_2,$ we repeat the process. After a finite number of steps, $g_2 - h_{2} f_2 = x_2^ {b-kc_2} x_3^{k u_{23}} x_4^{k u_{24}} - x_3^c x_4^d$ with $0 \leq b - k c_2 < c_2$ and $h_2 \in \mathbbmss{k}[x_2, x_3, x_4]$. Then $(b-kc_2) a_2 + k u_{23} a_3 + k u_{24} a_4 = c a_3 + d a_4$. Since $0 \leq b - k c_2 < c_2$ then $x_3^{k u_{23}} x_4^{k u_{24}}$ does not divide $x_3^c x_4^d$. The case $x_3^c x_4^d$ divides $x_3^{k u_{23}} x_4^{k u_{24}}$ leads to $b = k c_2, c = k u_{23}$ and $d = k u_{24}$. In this setting, $g_2 = h_2 f_2, g_1 = f_1 + m_1h_2 f_2$ and we are done. The remaining cases are $k u_{23} \geq c$ and $d \geq k u_{24}$, or $k u_{23} \leq c$ and $d \geq k u_{24}$. Without loss of generality (by swapping variables if necessary), we may assume that $k u_{23} \leq c$ and $d \leq k u_{24}$. Hence $(b-k c_2) a_2 +  (k u_{24} -d ) a_4 = (c - k u_{23}) a_3$, and consequently $c - k u_{23}  \geq c_3$. We also deduce that $g_2 - h_2 f_2 = x_3^{k u_{23} }x_4^d (  x_2^{b - kc_2} x_4^{k u_{24} - d} - x_3^{c - k u_{23}}$. Set $m_3 = x_3^{k  u_{23}} x_4^d$ and $g_3 = x_2^{b - k c_2} x_4^{k u_{24} - d} - x_3^{c-k  u_{23}}$. Since $v_3 - u_{13} - k u_{23} = c - k u_{23} \geq c_3$, we have that $v_2 \geq c_3$. Thus
$x_1^{c_1} - x_1^{u_{31}} x_2^{u_{32} + v_2} x_3^{v_3-c_3} x_4^{u_{34} + v_4}
\in I_\mathcal{A}$ and $f_1 \neq -f_3,$ from the minimality of $c_1$ it follows that $u_{31} = 0,$ that is to say, $f_3 \in \mathbbmss{k}[x_2, x_3, x_4].$ Analogously, by using a similar argument as before (and by swapping variables $x_2$ and $x_4$, if necessary), we obtain $h_3 \in \mathbbmss{k}[x_2, x_3, x_4]$ such that either $g_3 = h_3 f_3$ or $g_3 - h_3 f_3 = m_3 g_4,$ with $m_3 = -x_2^{v'_2} x_4^{v''_4}, g_4 = x_4^{v'_4 - v_4 + u_{14}-v''_4} - x_2^{v''_2-v'_2} x_3^{v''_3}$ and $v''_3 < c_3.$ If $g_3 = h_3 f_3,$ then $g_1 = f_1 + m_1 h_2 f_2 + m_1 m_2 h_3 f_3$ and we are done. Otherwise, since $x_1^{c_1} - x_1^{u_{41}}  x_2^{v'_2+v_2+u_{42}} x_3^{v'_3+u_{13}+u_{43}} x_4^{v'_4+u_{14}-c_4}
\in I_\mathcal{A}$ and $f_1 \neq -f_4,$ from the minimality of $c_1$ it follows that $u_{41} = 0,$ that is to say, $f_4 \in \mathbbmss{k}[x_2, x_3, x_4].$ Therefore, we have that $f_2, f_3$ and $f_4 \in \mathbbmss{k}[x_2, x_3, x_4].$ Taking into account that $I_\mathcal{A} \cap \mathbbmss{k}[x_2, x_3, x_4]$ is generated by $f_2, f_3$ and $f_4$ (see, e.g., \cite[Proposition 4.13(a)]{Sturmfels95} and \cite[Theorem 2.2]{OjPis}), we conclude that $g_2 = g_{21} f_2 + g_{23} f_3 + g_{24} f_4$ and hence $g_1 = f_1 + m_1 g_{21} f_2 + m_1 g_{23} f_3 + m_1 g_{24} f_4,$ with $g_{2j} \in \mathbbmss{k}[x_2, x_3, x_4],\ j = 1,3,4.$
\end{proof}

\begin{proposition} \label{Prop Critical1b}
Let $f_i = x_i^{c_i} - \prod_{j \neq i} x_j^{u_{ij}},\ i = 1, \ldots, 4,$ be a set of critical binomials. If $f_i \neq - f_j$ for every $i \neq j,$ then $C_\mathcal{A} = \langle f_1, f_2, f_3, f_4 \rangle.$
\end{proposition}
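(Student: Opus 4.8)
The plan is to reduce Proposition \ref{Prop Critical1b} to an application of Lemma \ref{Lemma Critical1}. By definition, $C_\mathcal{A}$ is generated by \emph{all} critical binomials of $I_\mathcal{A}$, so to prove $C_\mathcal{A} = \langle f_1, f_2, f_3, f_4 \rangle$ it suffices to show that every critical binomial of $I_\mathcal{A}$ lies in $\langle f_1, f_2, f_3, f_4 \rangle$; the reverse inclusion is trivial since each $f_i$ is itself critical. Fix $l \in \{1,2,3,4\}$ and let $g_l$ be an arbitrary critical binomial with respect to $x_l$. The hypothesis of the Proposition says exactly that $f_l \neq -f_i$ for every $i$ (indeed for every $i \neq l$, and $f_l = -f_l$ is impossible since $f_l \neq 0$), which is precisely the hypothesis needed to invoke Lemma \ref{Lemma Critical1}. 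Hence $g_l \in \langle f_1, f_2, f_3, f_4 \rangle$.

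Since $l$ and $g_l$ were arbitrary, every critical binomial of $I_\mathcal{A}$ belongs to $\langle f_1, f_2, f_3, f_4 \rangle$, and therefore $C_\mathcal{A} \subseteq \langle f_1, f_2, f_3, f_4 \rangle$. Combined with the obvious inclusion $\langle f_1, f_2, f_3, f_4 \rangle \subseteq C_\mathcal{A}$, this gives $C_\mathcal{A} = \langle f_1, f_2, f_3, f_4 \rangle$.

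The only subtlety — and it is minor — is checking that the hypothesis ``$f_i \neq -f_j$ for every $i \neq j$'' of the Proposition really does supply the hypothesis ``$f_l \neq -f_i$ for every $i$'' demanded by Lemma \ref{Lemma Critical1}: one must note that $f_l \neq -f_l$ automatically, because a critical binomial is nonzero ($c_l \geq 1$, so $x_l^{c_l}$ genuinely appears). With that observation the proof is a one-line quotation of the Lemma, so there is essentially no obstacle here; all the real work was already done in Lemma \ref{Lemma Critical1}.
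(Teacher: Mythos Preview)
Your proof is correct and is exactly the paper's approach: the paper's own proof is the single sentence ``The proof follows directly from Lemma \ref{Lemma Critical1},'' and you have simply spelled out the routine details of that deduction. The only extra content you add---noting that $f_l \neq -f_l$ is automatic because a critical binomial is nonzero---is a harmless clarification.
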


\begin{proof}
The proof follows directly from Lemma \ref{Lemma Critical1}.
\end{proof}

Observe that $f_i = -f_j$ if and only if $f_i = x_i^{c_i} - x_j^{c_j}$ and $f_j=x_j^{c_j} - x_i^{c_i}$; in particular, they are circuits. The following proposition provides an upper bound for the minimal number of generators of the critical ideal.

\begin{proposition}
The minimal number of generators $\mu(C_\mathcal{A})$ of $C_\mathcal{A}$ is less than or equal to four.
\end{proposition}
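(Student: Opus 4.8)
The plan is to deduce the bound $\mu(C_\mathcal{A})\le 4$ from the structural results already established, splitting into cases according to how many of the critical binomials $f_i$ coincide with the negative of another. First I would recall from Corollary \ref{Cor Critical} (and Proposition \ref{Prop critical0}) that for each $i$ the monomial $x_i^{c_i}$ is indispensable of $I_\mathcal{A}$, hence also of $C_\mathcal{A}$; so in any minimal system of binomial generators of $C_\mathcal{A}$ there is, for each $i$, a binomial whose leading monomial is $x_i^{c_i}$. Thus it is natural to start from a choice of critical binomials $f_1,f_2,f_3,f_4$ with $f_i=x_i^{c_i}-\prod_{j\neq i}x_j^{u_{ij}}$ and show $C_\mathcal{A}=\langle f_1,f_2,f_3,f_4\rangle$, which immediately gives $\mu(C_\mathcal{A})\le 4$.

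Next I would treat the generic case first: if $f_i\neq -f_j$ for all $i\neq j$, then Proposition \ref{Prop Critical1b} gives $C_\mathcal{A}=\langle f_1,f_2,f_3,f_4\rangle$ directly, so $\mu(C_\mathcal{A})\le 4$. It remains to handle the situation in which some coincidence $f_i=-f_j$ occurs; by the observation preceding this proposition, $f_i=-f_j$ forces $f_i=x_i^{c_i}-x_j^{c_j}$, a circuit, and in particular $c_ia_i=c_ja_j$. The point is that whenever such a coincidence occurs, a critical binomial with respect to $x_i$ can be chosen to be the \emph{same} binomial (up to sign) as a critical binomial with respect to $x_j$, so the four indices $1,2,3,4$ still contribute at most four \emph{distinct} binomials to the generating set. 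Concretely, if $f_1=-f_2=x_1^{c_1}-x_2^{c_2}$, then one still picks critical binomials $f_3,f_4$ with respect to $x_3,x_4$; I would then argue, by re-running the reduction of Lemma \ref{Lemma Critical1} against the set $\{f_1,f_3,f_4\}$ (note $f_2$ contributes nothing new), that every critical binomial with respect to $x_3$ or $x_4$ lies in $\langle f_1,f_3,f_4\rangle$, and every critical binomial with respect to $x_1$ or $x_2$ lies in $\langle f_1,f_3,f_4\rangle$ as well — using that $x_1^{c_1}-x_2^{c_2}$ already accounts for the $x_1$- and $x_2$-critical degrees. Hence $C_\mathcal{A}=\langle f_1,f_3,f_4\rangle$ and $\mu(C_\mathcal{A})\le 3\le 4$.

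The main obstacle I anticipate is the bookkeeping in the case analysis: when two distinct pairs coincide, e.g. $f_1=-f_2$ and $f_3=-f_4$, or when a coincidence is "chained" (which, however, cannot genuinely happen with more than one partner for a fixed $i$, since $f_i=-f_j$ and $f_i=-f_k$ with $j\neq k$ would force $x_j^{c_j}=x_k^{c_k}$, impossible for distinct variables). One must check that in every configuration the number of pairwise-distinct critical binomials needed is still at most four, and that the divisibility-reduction argument of Lemma \ref{Lemma Critical1} — which was stated under the hypothesis $f_l\neq -f_i$ for all $i$ — can be adapted when the "missing" $f_i$ is replaced by the circuit $x_i^{c_i}-x_j^{c_j}$ already in hand. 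I expect this adaptation to be routine because the circuit is itself a critical binomial with respect to both $x_i$ and $x_j$, so the minimality-of-$c_1$ arguments in the proof of Lemma \ref{Lemma Critical1} go through verbatim. Once all cases are dispatched, the inequality $\mu(C_\mathcal{A})\le 4$ follows.
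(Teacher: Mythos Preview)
There is a genuine gap in your proposal. You claim that when $f_1=-f_2=x_1^{c_1}-x_2^{c_2}$ one can conclude $C_\mathcal{A}=\langle f_1,f_3,f_4\rangle$, hence $\mu(C_\mathcal{A})\le 3$ in that case. This is false in general: Theorem~\ref{Thm Critical2}, CASE~2(a) and CASE~4(a), exhibit situations with $c_1a_1=c_2a_2$ and $\mu(C_\mathcal{A})=4$, where an additional critical binomial $x_2^{c_2}-\mathbf{x}^{\mathbf{u}_2}$ (with $\mathbf{x}^{\mathbf{u}_2}\neq x_1^{c_1}$) is needed and does not lie in $\langle x_1^{c_1}-x_2^{c_2},f_3,f_4\rangle$. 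So the assertion that ``$x_1^{c_1}-x_2^{c_2}$ already accounts for the $x_1$- and $x_2$-critical degrees'' is exactly what fails.

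Your expectation that the proof of Lemma~\ref{Lemma Critical1} adapts ``verbatim'' is also incorrect. In that proof, when reducing a critical binomial $g_1$ against $f_1$ and $f_2$, the hypothesis $f_1\neq -f_2$ is used to deduce $u_{21}=0$ from the minimality of $c_1$; if $f_2=-f_1$ the combination $f_1+m_1x_2^{\ldots}f_2$ collapses to zero and the argument yields no information. This is precisely why the paper does \emph{not} drop the redundant $f_2$: instead, when $f_1=-f_2$ and the set $\{f_1,f_2,f_3,f_4\}$ fails to generate $C_\mathcal{A}$, the paper finds a different critical binomial $g$ (with respect to one of the variables) satisfying $g\neq\pm f_i$ for all $i$, and replaces one of the $f_i$'s by $g$ so that the new quadruple meets the hypothesis of Proposition~\ref{Prop Critical1b} or Lemma~\ref{Lemma Critical1}. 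The count stays at four; no claim $\mu\le 3$ is ever made. Your case analysis needs this replacement step rather than a reduction in the number of generators.
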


\begin{proof}
Let $\mathcal{F}=\{f_1, \ldots, f_4\} \subset I_\mathcal{A}$ be such that $f_i$ is critical with respect to $x_i.$ If $f_i \neq -f_j,$ for every $i \neq j,$ then we are done by Proposition \ref{Prop Critical1b}. Otherwise, without loss of generality we may assume $f_1 = -f_2,$ that is to say, $f_1 = x_1^{c_1} - x_2^{c_2}.$ Suppose that $\mathcal{F}$ is not a generating set of $C_\mathcal{A}.$ We distinguish the following cases: (1) $f_1$ is indispensable of $I_\mathcal{A}$. Then there exists a critical binomial $g \in I_\mathcal{A}$ with respect to at least one of the variables $x_3$ and $x_4$, say $x_4$, such that $g \neq \pm f_i,$ for every $i.$ By substitution of $f_4$ with $g$ in $\mathcal{F}$ we have, from Lemma \ref{Lemma Critical1}, that every critical binomial with respect to $x_3$ or $x_4$ is in the ideal generated by the binomials of $\mathcal{F}$. Consequently the new set $\mathcal{F}$ generates $I_\mathcal{A}$.\\ (2) $f_1$ is not indispensable of $I_\mathcal{A}$. Then there exists a critical binomial $g \in I_\mathcal{A}$ with respect to al least one of the variables $x_1$ and $x_2$, for instance $x_2$, such that $g \neq \pm f_i,$ for every $i.$ We substitute $f_2$ with $g$ in $\mathcal{F}$. If $f_3 \neq -f_4,$ then we have, from Proposition \ref{Prop Critical1b}, that the new set $\mathcal{F}$ generates $I_\mathcal{A}$. Otherwise, we substitute $f_3$ with a critical binomial $h$ with respect to $x_3$ in $\mathcal{F}$ such that $h \neq \pm f_i,$ for every $i,$ when $f_3$ is not indispensable. So, in this case, $C_\mathcal{A}$ is generated by a set of $4$ critical binomials.
\end{proof}

\begin{lemma}\label{Lemma Critical2} If $c_ia_i \neq c_k a_k$ and $c_ia_i \neq c_l a_l,$ where $k \neq l,$ then either
the only critical binomial of $I_\mathcal{A}$ with respect to $x_i$ is $f =  x_i^{c_i} - x_j^{c_j}$ or there
exists a critical binomial $f \in I_\mathcal{A}$ with respect to $x_i$ such that $\mathrm{supp}(f)$ has
cardinality greater than or equal to three, where $\{i,j,k,l\}=\{1,2,3,4\}$.
\end{lemma}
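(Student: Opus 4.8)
The plan is to establish the contrapositive of the ``or'': assuming that \emph{no} critical binomial of $I_\mathcal{A}$ with respect to $x_i$ has support of cardinality $\geq 3$, I will show that $x_i^{c_i}-x_j^{c_j}$ is the only critical binomial with respect to $x_i$ (here $j$ is the index for which $\{i,j,k,l\}=\{1,2,3,4\}$). Put $b=c_i a_i$. The workhorse is the elementary fact that \emph{every monomial of $\mathcal{A}$-degree $b$ is either $x_i^{c_i}$ or is not divisible by $x_i$}: if $\mathbf{x}^\mathbf{w}$ has $\mathcal{A}$-degree $b$ and $0<w_i<c_i$, then, since $I_\mathcal{A}$ is prime and $x_i\notin I_\mathcal{A}$, one may cancel $x_i^{w_i}$ from $x_i^{c_i}-\mathbf{x}^\mathbf{w}\in I_\mathcal{A}$ to obtain a binomial $x_i^{c_i-w_i}-\mathbf{x}^{\mathbf{w}'}\in I_\mathcal{A}$ with $\mathbf{x}^{\mathbf{w}'}$ not divisible by $x_i$, contradicting the minimality of $c_i$; and $w_i=c_i$ forces $\mathbf{x}^\mathbf{w}=x_i^{c_i}$. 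It follows at once that, under our assumption, any monomial of $\mathcal{A}$-degree $b$ not divisible by $x_i$ is a pure power $x_m^{b/a_m}$ for some $m\in\{j,k,l\}$: otherwise its support would have cardinality $\geq 2$ and then $x_i^{c_i}-\mathbf{x}^\mathbf{w}$, where $\mathbf{x}^\mathbf{w}$ is such a monomial, would be a critical binomial with respect to $x_i$ with support of cardinality $\geq 3$. At least one such monomial exists because $b=c_i a_i\in\sum_{p\neq i}\mathbb{N}a_p$.

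The core of the argument is to show that such an $m$ must be $j$. I rule out $m=k$ (the case $m=l$ being symmetric, with the hypothesis $c_i a_i\neq c_l a_l$ used in place of $c_i a_i\neq c_k a_k$). Suppose $x_k^{s}$ with $s=b/a_k\geq 1$ is a monomial of $\mathcal{A}$-degree $b$. From $s a_k=b\in\mathbb{N}a_i\subseteq\sum_{p\neq k}\mathbb{N}a_p$ and the minimality of $c_k$ we get $c_k\leq s$; moreover $c_k=s$ would give $c_k a_k=b=c_i a_i$, contradicting the hypothesis, so $c_k<s$. Choosing a critical binomial $x_k^{c_k}-x_i^{\alpha}x_j^{\beta}x_l^{\gamma}\in I_\mathcal{A}$ with respect to $x_k$, the monomial $\mathbf{x}^\mathbf{w}=x_k^{s-c_k}x_i^{\alpha}x_j^{\beta}x_l^{\gamma}$ has $\mathcal{A}$-degree $b$ and is divisible by $x_k$ (since $s-c_k\geq 1$); the fact above then forces $\alpha=0$. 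Hence $c_k a_k=\beta a_j+\gamma a_l$ with $(\beta,\gamma)\neq(0,0)$, so $\mathbf{x}^\mathbf{w}=x_k^{s-c_k}x_j^{\beta}x_l^{\gamma}$ is a monomial of $\mathcal{A}$-degree $b$, not divisible by $x_i$, whose support has cardinality $\geq 2$, which is impossible by the previous paragraph. Therefore $m\neq k$, and symmetrically $m\neq l$, so $m=j$; in particular $a_j\mid b$ and, by the previous paragraph, $x_j^{b/a_j}$ is the unique monomial of $\mathcal{A}$-degree $b$ not divisible by $x_i$. Consequently $x_i^{c_i}-x_j^{b/a_j}$ is the unique critical binomial of $I_\mathcal{A}$ with respect to $x_i$.

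Finally I must check $b/a_j=c_j$. Since $b=c_i a_i\in\mathbb{N}a_i\subseteq\sum_{p\neq j}\mathbb{N}a_p$, we have $c_j\leq b/a_j$; assuming $c_j<b/a_j$ and repeating the substitution argument of the previous paragraph with a critical binomial $x_j^{c_j}-x_i^{\alpha}x_k^{\beta}x_l^{\gamma}\in I_\mathcal{A}$ with respect to $x_j$ again produces (after $\alpha$ is forced to vanish) a monomial $x_j^{b/a_j-c_j}x_k^{\beta}x_l^{\gamma}$ of $\mathcal{A}$-degree $b$, not divisible by $x_i$, with support of cardinality $\geq 2$ --- a contradiction. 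Hence $c_j=b/a_j$ and $c_i a_i=c_j a_j$, so the unique critical binomial with respect to $x_i$ is indeed $x_i^{c_i}-x_j^{c_j}$.

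I expect the main obstacle to be organizing cleanly the repeatedly used ``cancel a power of $x_i$'' argument: one must be sure that the cancellation is legitimate ($I_\mathcal{A}$ prime, $x_i\notin I_\mathcal{A}$), that the surviving exponent of $x_i$ lies strictly between $0$ and $c_i$ so as to contradict the minimality defining $c_i$, and that the degenerate alternatives are disposed of --- the possibility $\mathbf{x}^\mathbf{w}=x_i^{c_i}$ by the presence of the extra factor $x_k^{s-c_k}$ (resp. $x_j^{b/a_j-c_j}$), and the possibilities $c_k a_k=b$, $c_l a_l=b$ and $c_j a_j=b$ by the two hypotheses of the lemma (resp. by the temporary assumption $c_j<b/a_j$). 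Isolating the elementary fact about monomials of $\mathcal{A}$-degree $b$ at the start is precisely what reduces all three invocations of the argument to routine bookkeeping.
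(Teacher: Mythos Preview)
Your proof is correct and follows essentially the same approach as the paper's. Both arguments assume no critical binomial with respect to $x_i$ has support of cardinality $\geq 3$ and then, given a ``bad'' monomial $x_m^{u_m}$ of $\mathcal{A}$-degree $c_ia_i$ with $u_m>c_m$, substitute via a critical binomial $x_m^{c_m}-\mathbf{x}^{\mathbf{w}}$ with respect to $x_m$; the minimality of $c_i$ forces the $x_i$-exponent in $\mathbf{x}^{\mathbf{w}}$ to vanish, producing a monomial of $\mathcal{A}$-degree $c_ia_i$ whose support meets $\{j,k,l\}$ in at least two indices, a contradiction. Your presentation, organized around the fiber $\deg_\mathcal{A}^{-1}(c_ia_i)$, is somewhat more systematic and explicit than the paper's terse case analysis (which treats the case $m=j$, $u_j>c_j$ and declares the cases $m=k,l$ analogous), but the substantive step is identical.
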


\begin{proof}
Suppose the contrary and let $f_i= x_i^{c_i}-x_j^{u_j} \in I_\mathcal{A}$ where $u_j > c_j.$ We define  $f = x_i^{c_i}-x_i^{v_i}x_j^{u_j-c_j}x_k^{v_k}x_l^{v_l} = f_i + x_j^{u_j-c_j} f_j \in I_\mathcal{A}$ with $f_j = x_j^{c_j} - x_i^{v_i}x_k^{v_k}x_l^{v_l} \in I_\mathcal{A}.$ Now, from the minimality of $c_i$ it follows that $v_i = 0,$ thus at least one of $v_k$ or $v_l$ is different from zero since $f_j \in I_\mathcal{A}$, otherwise $f-f_i = x_j^{u_j} - x_j^{u_j-c_j} \in I_\mathcal{A}$, and this is impossible. Therefore we conclude that $\mathrm{supp}(f)$ has cardinality greater than or equal to $3,$ a contradiction. The cases $f_i = x_i^{c_i}-x_k^{u_k} \in I_\mathcal{A}$ and $f_i = x_i^{c_i}-x_l^{u_l} \in I_\mathcal{A}$ are analogous, by using that $c_i a_i \neq c_k a_k$ and $c_i a_i \neq c_l a_l$, respectively.
\end{proof}

\begin{lemma}\label{Lemma new1}
There is no minimal generating set of $C_\mathcal{A}$ of the form $\mathcal{S} = \{ x_i^{c_i} - x_j^{c_j}, x_j^{c_j} - \mathbf{x}^{\mathbf{u}_j}, x_k^{c_k} - x_l^{c_l}, x_l^{c_l} - \mathbf{x}^{\mathbf{u}_l} \}$, where $\{i,j,k,l\} = \{1,2,3,4\}$. In particular, if $c_i a_i= c_j a_j$ and $c_k a_k = c_l a_l,$ then $\mu(C_\mathcal{A}) < 4$.
\end{lemma}

\begin{proof}
Set $\mathbf{u}_j = (u_{j1}, \ldots, u_{j4})$ and $\mathbf{u}_l = (u_{l1}, \ldots, u_{l4})$. The minimality of $c_i,\ i \in \{1,2,3,4\}$, forces $u_{ji}=0=u_{jj}$, $0<u_{jk}<c_k$, $0<u_{jl}<c_l$, $0<u_{li}<c_i$, $0<u_{lj}<c_j$, $u_{lk}=0=u_{ll}$. 

Set $d_n=\gcd(\mathcal{A} \setminus \{a_n\}),\ n \in \{1,2,3,4\}$. By \cite[Theorem 3.10]{Herzog70}, the numerical semigroup generated by $\{a_i/d_l, a_j/d_l, a_k/d_l\}$ is symmetric and, from the proof of [Theorem 10.6,23], it is derived that $a_i/d_l=c_jc_k$, $a_j/d_l=c_ic_k$, $c_k=gcd(a_i/d_l,a_j/d_l)$ and $c_k a_k/d_l=u_{li}a_i/d_l+u_{lj}a_j/d_l$. Hence $a_i = c_j c_k d_l, a_j = c_i c_k d_l$ and $a_k = (u_{li} c_j + c_{lj} c_i) d_l$. Arguing analogously with $\{a_i/d_k, a_j/d_k, a_l/d_k\}$, we get $a_i = c_j c_l d_k, a_j = c_i c_l d_k$ and $a_l = (u_{li} c_j + c_{lj} c_i) d_l$. Thus, since $\mathrm{gcd}(c_i, c_j) = \gcd(c_k,c_l) = 1$, we conclude that $d_k = c_k$ and $d_l = c_l$. By considering now the symmetric semigroups $\{a_i/d_j, a_k/d_j, a_l/d_j\}$ and $\{a_j/d_i, a_k/d_i, a_l/d_i\}$, we get $a_i = (u_{jk} c_l + c_{jl} c_k) c_j, a_j = (u_{jk} c_l + c_{jl} c_k) c_i, a_k = c_i c_j c_l$ and $a_l = c_i c_j c_k$. 

Putting all this together, we obtain that $u_{jk} c_l + c_{jl} c_k = c_l c_k$ which forces either $u_{jk} = 0$ or $u_{jk} \geq c_k$, and this is a contradiction in both cases.
\end{proof}

\begin{theorem}\label{Thm Critical2} After permuting the variables, if necessary, there exists a minimal
system of binomial generators $\mathcal{S}$ of $C_\mathcal{A}$ of the following form: \begin{itemize}
\item[CASE 1:] If $c_i a_i \neq c_j a_j$, for every $\ i \neq j,$ then $\mathcal{S} = \{ x_i^{c_i} -
\mathbf{x}^{\mathbf{u}_i},\ i = 1, \ldots, 4 \}$ \item[CASE 2:] If $c_1 a_1= c_2 a_2$ and $c_3 a_3 = c_4 a_4,$
then either $c_2 a_2 \neq c_3 a_3$ and \begin{itemize}  \item[(a)] $\mathcal{S} = \{x_1^{c_1} - x_2^{c_2}, x_3^{c_3} - x_4^{c_4},
x_4^{c_4} - \mathbf{x}^{{u}_4} \}$ when $\mu(C_\mathcal{A})=3$ \item[(b)] $\mathcal{S} = \{x_1^{c_1} -
x_2^{c_2}, x_3^{c_3} - x_4^{c_4}\}$ when $\mu(C_\mathcal{A})=2$ \end{itemize} or $c_2 a_2 =c_3 a_3$ and
\begin{itemize} \item[(c)] $\mathcal{S} = \{x_1^{c_1} - x_2^{c_2}, x_2^{c_2} - x_3^{c_3}, x_3^{c_3} -
x_4^{c_4}\}$ \end{itemize} \smallskip \item[CASE 3:] If $c_1 a_1 = c_2 a_2 = c_3 a_3 \neq c_4 a_4,$ then
$\mathcal{S} = \{ x_1^{c_1} - x_2^{c_2}, x_2^{c_2} - x_3^{c_3}, x_4^{c_4} - \mathbf{x}^{\mathbf{u}_4}\}$
\item[CASE 4:] If $c_1 a_1 = c_2 a_2$ and $c_i a_i \neq c_j a_j$ for all $\{i, j\} \neq \{1, 2\},$ then
\begin{itemize} \item[(a)] $\mathcal{S} = \{x_1^{c_1} - x_2^{c_2}, x_i^{c_i} - \mathbf{x}^{{u}_i}\ \mid\ i =
2,3,4\}$ when $\mu(C_\mathcal{A})=4$ \item[(b)] $\mathcal{S} = \{x_1^{c_1} - x_2^{c_2}, x_i^{c_i} -
\mathbf{x}^{{u}_i}\ \mid\ i = 3,4\}$ when $\mu(C_\mathcal{A})=3$ \end{itemize} \end{itemize} where, in
each case, $\mathbf{x}^{\mathbf{u}_i}$ denotes an appropriate monomial whose support has cardinality greater than or equal to two.
\end{theorem}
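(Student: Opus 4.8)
The plan is to prove Theorem \ref{Thm Critical2} by a careful case analysis driven entirely by the comparison of the $\mathcal{A}$-degrees $c_i a_i$ of the indispensable monomials $x_i^{c_i}$, combined with three tools already available: Proposition \ref{Prop Critical1b} (when no two critical binomials are negatives of each other, the four $f_i$ generate $C_\mathcal{A}$), Lemma \ref{Lemma Critical1} (replaceability of a critical binomial by any other one), and Lemma \ref{Lemma Critical2} (if $c_i a_i$ differs from $c_k a_k$ and $c_l a_l$ for the two indices $k,l$ distinct from some third index, then either the unique critical binomial with respect to $x_i$ is a circuit $x_i^{c_i}-x_j^{c_j}$, or some critical binomial with respect to $x_i$ has support of size $\geq 3$). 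First I would record the elementary fact, noted just before the theorem, that $f_i = -f_j$ is possible only when $f_i = x_i^{c_i}-x_j^{c_j}$, which forces $c_i a_i = c_j a_j$; hence whenever $c_i a_i \neq c_j a_j$ for all $i \neq j$ we are automatically in the hypothesis of Proposition \ref{Prop Critical1b}, and moreover Lemma \ref{Lemma Critical2} (applied with the roles of $k,l$ being any two of the other three indices) guarantees each $f_i$ can be chosen with $|\mathrm{supp}(f_i)| \geq 2$; this settles CASE 1, after noting that minimality of the system follows because each $x_i^{c_i}$ is an indispensable monomial (Proposition \ref{Prop critical0}), so no generator is redundant.

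Next I would handle the cases where some equalities $c_i a_i = c_j a_j$ occur. The combinatorial skeleton is: the equivalence classes of $\{1,2,3,4\}$ under the relation $c_i a_i = c_j a_j$ can have shape $(1,1,1,1)$ (CASE 1), $(2,2)$ (CASE 2), $(3,1)$ (CASE 3), $(4)$ — but an all-equal situation cannot occur for a curve in $\mathbb{A}^4$ giving a proper critical ideal unless it degenerates, and in fact when all four are equal one checks the critical ideal is still generated by circuits, which is absorbed into the $(2,2)$-type analysis — and $(2,1,1)$ (CASE 4). In the $(2,2)$ case, say $c_1a_1 = c_2a_2$ and $c_3a_3 = c_4a_4$: if additionally $c_2a_2 = c_3a_3$ we are in the all-equal subcase, where I would argue using Lemma \ref{Lemma Critical2} that for each $i$ there is a circuit critical binomial $x_i^{c_i}-x_j^{c_j}$, chain them up as $x_1^{c_1}-x_2^{c_2}, x_2^{c_2}-x_3^{c_3}, x_3^{c_3}-x_4^{c_4}$, and invoke Theorem \ref{Th Result} to see these three generate (item (d)); otherwise $c_2a_2 \neq c_3a_3$, and then applying Lemma \ref{Lemma Critical2} to $i=2$ (with the other pair being $\{3,4\}$) and to $i=4$ (with the other pair being $\{1,2\}$) yields: either a critical binomial with respect to $x_2$ has support $\geq 3$, or $x_1^{c_1}-x_2^{c_2}$ is the only one; similarly for $x_4$; running the $\mu(C_\mathcal{A})$-bookkeeping from the previous proposition's proof then produces exactly the subcases (a), (b), (c) according to whether the number of needed generators is $4$, $3$, or $2$. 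CASE 3 and CASE 4 are analogous but shorter: in CASE 3 one gets a circuit chain $x_1^{c_1}-x_2^{c_2}, x_2^{c_2}-x_3^{c_3}$ on the size-three class (again via Lemma \ref{Lemma Critical2} and the argument of Theorem \ref{Th Result}) plus the single generator $x_4^{c_4}-\mathbf{x}^{\mathbf{u}_4}$ from the isolated index, whose support has size $\geq 2$ because $c_4 a_4 \neq c_i a_i$ for $i=1,2,3$; CASE 4 splits into $\mu = 4$ or $\mu = 3$ exactly as in the $(2,2)$ analysis with the isolated pair replaced by two isolated singletons $3,4$.

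The main obstacle, and the place where genuine care is required rather than bookkeeping, is showing in each subcase that the listed set $\mathcal{S}$ is actually \emph{minimal} and that the stated cardinality $\mu(C_\mathcal{A})$ is correct — i.e. that one cannot do with fewer generators, and conversely that the circuits $x_i^{c_i}-x_j^{c_j}$ listed really do generate together with the extra $\mathbf{x}^{\mathbf{u}_i}$'s. Minimality of a generator $x_i^{c_i}-\mathbf{x}^{\mathbf{u}_i}$ is clear from Proposition \ref{Prop critical0} (its leading monomial $x_i^{c_i}$ is indispensable, so some generator must carry it and it cannot be a $\mathbbmss{k}$-combination of the others of strictly smaller support), but the subtle point is when a circuit $x_i^{c_i}-x_j^{c_j}$ can be \emph{omitted} because it is recoverable from the others — this is precisely what distinguishes (a) from (b) from (c), and it hinges on whether, in the equal-degree class, the only critical binomial with respect to a given variable is that circuit or whether a higher-support critical binomial is also available (Lemma \ref{Lemma Critical2}) so that Lemma \ref{Lemma Critical1}'s replacement can be run to eliminate one generator. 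I would organize this as a uniform lemma: in a class $\{i,j\}$ with $c_ia_i=c_ja_j$, the circuit $x_i^{c_i}-x_j^{c_j}$ is needed in every generating set iff it is indispensable of $I_\mathcal{A}$, and count accordingly. The remaining verifications — that each $\mathbf{x}^{\mathbf{u}_i}$ has support cardinality $\geq 2$ (immediate from $c_i$ being minimal, so $x_i^{c_i}-x_j^{c_j}$ with a single variable on the right is impossible unless $c_ia_i=c_ja_j$ and we are in a circuit case already listed), and that permuting variables puts things in the stated normal form — are routine and I would state them briefly without grinding through the inequalities on the exponent vectors $\mathbf{u}_i$.
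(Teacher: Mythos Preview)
Your overall strategy---partition $\{1,2,3,4\}$ by the relation $c_ia_i=c_ja_j$, invoke Proposition~\ref{Prop Critical1b} and Lemma~\ref{Lemma Critical1} for generation, Lemma~\ref{Lemma Critical2} for the support condition, and Proposition~\ref{Prop critical0} for minimality---is exactly the paper's. Two misattributions need fixing, though.

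First, in the all-equal subcase 2(d) and in CASE~3 you appeal to Lemma~\ref{Lemma Critical2} to produce the circuit critical binomials on the equal-degree class, but the hypothesis of that lemma ($c_ia_i$ different from \emph{two} of the other $c_ka_k$) fails precisely there. No lemma is needed: once $c_ia_i=c_ja_j$, the circuit $x_i^{c_i}-x_j^{c_j}$ lies in $I_\mathcal{A}$ by definition of $\mathcal{A}$-degree and is automatically critical with respect to both $x_i$ and $x_j$.

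Second, you invoke Theorem~\ref{Th Result} in 2(d) and CASE~3 to conclude that the chain of circuits generates $C_\mathcal{A}$, but that theorem runs in the opposite direction: its \emph{hypothesis} is that the circuits already generate $C_\mathcal{A}$, and its conclusion is $C_\mathcal{A}=I_\mathcal{A}$. The paper instead stays with Proposition~\ref{Prop Critical1b}: in 2(d) adjoin the closing circuit $f_4=x_4^{c_4}-x_1^{c_1}$ (and in CASE~3 take $f_3=x_3^{c_3}-x_1^{c_1}$) so that $f_i\neq -f_j$ for all $i\neq j$; then the four $f_i$ generate $C_\mathcal{A}$, and since the adjoined circuit is the negative sum of the others it is redundant, leaving the three listed generators.

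Finally, your minimality discussion is more involved than necessary. The paper simply notes that each $x_i^{c_i}$ is an indispensable monomial of $C_\mathcal{A}$ (Corollary~\ref{Cor CombIndMinJ}), so no proper subset of $\mathcal{S}$ can generate; the finer split into $\mu=4,3,2$ subcases is then determined by whether, for an equal-degree pair $\{i,j\}$, $\pm(x_i^{c_i}-x_j^{c_j})$ are the \emph{only} critical binomials with respect to $x_i$ and $x_j$ (the situation of Lemma~\ref{Lemma Critical1}), rather than by an indispensability-in-$I_\mathcal{A}$ criterion.
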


\begin{proof} First, we observe that our assumption on the cardinality of $\mathbf{x}^{\mathbf{u}_i}$ follows
from Lemma \ref{Lemma Critical2}. We also notice that  $C_\mathcal{A}$ has no minimal generating set of the form $\mathcal{S} = \{ x_1^{c_1} - x_2^{c_2}, x_2^{c_2} - \mathbf{x}^{{u}_2}, x_3^{c_3} - x_4^{c_4}, x_4^{c_4} - \mathbf{x}^{{u}_4} \}$, by Lemma \ref{Lemma new1}.

Let $J$ be the ideal generated by $\mathcal{S}$. For the cases 1, 2(a-c), 3 and
4(a), it easily follows that $J = C_\mathcal{A}$  by Proposition \ref{Prop Critical1b}. Indeed, in
order to satisfy the hypothesis of Proposition \ref{Prop Critical1b}, we may take $f_4 =  x_4^{c_4} - x_1^{c_1}
\in J$ and $f_3 = x_3^{c_3} - x_1^{c_1} \in J$ in the cases 2(c) and 3, respectively. The cases 2(a) and 4(b)
happen when the only critical binomials of $I_\mathcal{A}$ with respect to $x_1$ and $x_2$ are $f_1 = x_1^{c_1} -
x_2^{c_2}$ and $f_2 = -f_1,$ respectively, then our claim follows from Lemma \ref{Lemma Critical1}. Furthermore, the
case 2(b) occurs when the only critical binomials of $I_\mathcal{A}$ are $\pm (x_1^{c_1} - x_2^{c_2})$ and $\pm
(x_3^{c_3} - x_4^{c_4}),$ so $J = C_\mathcal{A}$ by definition. On the other hand, since $x_i^{c_i}$ is an indispensable monomial of $I_\mathcal{A},$ for every $i,$ by Corollary
\ref{Cor CombIndMinJ}, we have that $x_i^{c_i}$ is an indispensable monomial of the ideal $J,$ for every $i.$
Then, we conclude that $\mathcal{S}$ is minimal in the sense that no proper subset of $\mathcal{S}$ generates
$J.$
\end{proof}

\begin{example}
This example illustrates all possible cases of Theorem \ref{Thm Critical2}.\par 
CASE 1: $\mathcal{A}=\{17,19,21,25\}$.\par 
CASE 2(a): $\mathcal{A}=\{30, 34, 42, 51\}$.\par 
CASE 2(b): $\mathcal{A}=\{39,91,100,350\}$.\par 
CASE 2(c): $\mathcal{A}=\{60,132,165,220\}$.\par 
CASE 3: $\mathcal{A}=\{12,19,20,30\}$.\par 
CASE 4(a): $\mathcal{A}=\{12,13,17,20\}$.\par 
CASE 4(b): $\mathcal{A}=\{4,6,11,13\}$.\\
The reader may perform the computations in detail by using the GAP package NumericalSgps (\cite{DGM}).
\end{example}

Since $C_\mathcal{A} \subseteq I_\mathcal{A},$ any minimal system of generators of $I_\mathcal{A}$ can not contain more than $4$ critical binomials. This provides an affirmative answer to the question after Corollary 2 in \cite{Bresinsky88}. Notice that the only cases in which $C_\mathcal{A}$ can have a unique minimal system of generators are 1, 2(b) and 4(b); in these cases $C_\mathcal{A}$ has a unique minimal system of binomial generators if
and only if the monomials $\mathbf{x}^{\mathbf{u}_i}$ are indispensable.

Now we focus our attention on finding a minimal set of binomial generators of $I_\mathcal{A}$, that will help us to solve the classification problem. The following lemma will be useful in the proof of Proposition \ref{Prop Critical3} and Theorem \ref{Th Main}.

\begin{lemma}\label{Lemma Critical3} (i) If $f = x_i^{u_i} - \mathbf{x}^\mathbf{v}$ is a minimal generator of $I_\mathcal{A}$ that is not critical, then there exists $j \neq i$ such that $\mathrm{supp}(\mathbf{x}^\mathbf{v}) \cap \{i,j\} = \varnothing$ and $c_i a_i = c_j a_j.$ Moreover, if $\mathbf{x}^\mathbf{v}$ is not indispensable, then $c_k a_k = c_l a_l,$ with $\{i,j,k,l\} = \{1,2,3,4\}.$\\ (ii) If $f = x_i^{u_i} x_j^{u_j} - \mathbf{x}^\mathbf{v}$ is a minimal generator of $I_\mathcal{A}$ with $u_i \neq 0$ and $u_j \geq c_j,$ then $\mathrm{supp}(\mathbf{x}^\mathbf{v})  \cap \{i,j\} = \varnothing$ and $c_i a_i = c_j a_j.$ In addition, if $\mathbf{x}^\mathbf{v}$ is not indispensable, then $c_k a_k = c_l a_l,$ with $\{i,j,k,l\} = \{1,2,3,4\}.$
\end{lemma}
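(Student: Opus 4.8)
The plan is to treat the two parts by the same mechanism: when $f$ is a minimal generator of $I_\mathcal{A}$ that is \emph{not} a critical binomial (part (i)) or has $u_j \geq c_j$ (part (ii)), the monomial $x_i^{u_i}$ or $x_i^{u_i}x_j^{u_j}$ is \emph{not} indispensable, so by Theorem \ref{Thm CombIndMonJ} and Theorem \ref{Th OjVi1} it cannot be an isolated vertex of $G_\mathbf{b}(I_\mathcal{A})$ where $\mathbf{b}$ is its $\mathcal{A}$-degree; there is therefore a monomial properly dividing it whose complementary quotient still lies in a binomial of $I_\mathcal{A}$. I would extract from this divisibility the arithmetic constraints on the exponents of $\mathbf{x}^\mathbf{v}$ and on the $c_k a_k$.

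For part (i): since $f = x_i^{u_i} - \mathbf{x}^\mathbf{v}$ is a minimal generator which is not critical, $u_i > c_i$ by definition of $c_i$ (any binomial $x_i^{u_i}-\mathbf{x}^\mathbf{v}$ with $u_i < c_i$ would have $u_i a_i \notin \sum_{k\ne i}\mathbb{N}a_k$, impossible; and $u_i = c_i$ would make $f$ critical). Write $u_i = c_i + r$ with $r \geq 1$ and let $f_i = x_i^{c_i} - \mathbf{x}^{\mathbf{u}_i}$ be a critical binomial with respect to $x_i$. Then $x_i^{r}(x_i^{c_i} - \mathbf{x}^{\mathbf{u}_i}) = x_i^r f_i \in I_\mathcal{A}$, and subtracting gives $x_i^{r}\mathbf{x}^{\mathbf{u}_i} - \mathbf{x}^\mathbf{v} \in I_\mathcal{A}$. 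Reducing to its primitive part, one obtains a binomial $\mathbf{x}^{\mathbf{w}} - \mathbf{x}^{\mathbf{w}'}$ with $\mathbf{x}^\mathbf{w} \mid x_i^r \mathbf{x}^{\mathbf{u}_i}$ and $\mathbf{x}^{\mathbf{w}'} \mid \mathbf{x}^\mathbf{v}$; because $f$ is a \emph{minimal} generator (Theorem \ref{Th OjVi1}), $x_i^{u_i}$ and $\mathbf{x}^\mathbf{v}$ lie in different components of $G_\mathbf{b}(I_\mathcal{A})$, which forces $i \notin \mathrm{supp}(\mathbf{x}^\mathbf{v})$ and, analyzing which variable survives in the reduction, that there is a single index $j$ with $\mathbf{x}^{\mathbf{u}_i}$ essentially a power of $x_j$ modulo the obstruction; the $\mathcal{A}$-degree equality $c_i a_i = \deg_\mathcal{A}(x_i^{c_i}) = \deg_\mathcal{A}(\mathbf{x}^{\mathbf{u}_i})$ together with $\mathrm{supp}(\mathbf{x}^\mathbf{v}) \cap \{i,j\} = \varnothing$ and minimality then yields $\mathrm{supp}(\mathbf{x}^{\mathbf{u}_i}) = \{j\}$, i.e. $c_i a_i = c_j a_j$. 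If in addition $\mathbf{x}^\mathbf{v}$ is not indispensable, then $\{\mathbf{x}^\mathbf{v}\}$ is not a component of $G_\mathbf{b}(I_\mathcal{A})$ either, so by the same argument applied now to $\mathbf{x}^\mathbf{v}$ (which has support in the remaining variables $\{k,l\}$) one gets $c_k a_k = c_l a_l$ with $\{i,j,k,l\} = \{1,2,3,4\}$.

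For part (ii): with $f = x_i^{u_i}x_j^{u_j} - \mathbf{x}^\mathbf{v}$, $u_i \neq 0$ and $u_j \geq c_j$, write $u_j = c_j + r$, $r \geq 0$, and let $f_j = x_j^{c_j} - \mathbf{x}^{\mathbf{u}_j}$ be critical with respect to $x_j$. Then $x_i^{u_i}x_j^{r}\mathbf{x}^{\mathbf{u}_j} - \mathbf{x}^\mathbf{v} = f - x_i^{u_i}x_j^{r} f_j \in I_\mathcal{A}$; since $u_i \neq 0$ the left monomial is properly divisible information that makes $x_i^{u_i}x_j^{u_j}$ a non-isolated vertex of $G_\mathbf{b}(I_\mathcal{A})$, and by Theorem \ref{Th OjVi1} (minimality of $f$) the two monomials of $f$ are in distinct components; as before this forces $\{i,j\}\cap\mathrm{supp}(\mathbf{x}^\mathbf{v}) = \varnothing$ and, via the $\mathcal{A}$-degree bookkeeping $\deg_\mathcal{A}(x_j^{c_j}) = \deg_\mathcal{A}(\mathbf{x}^{\mathbf{u}_j})$ combined with $u_i a_i$ being ``trapped'' by the minimality of $c_i$ (Proposition \ref{Prop critical0}, exactly as in the proof of Theorem \ref{Th indispensablebinom3}), that $c_i a_i = c_j a_j$; the final clause on $c_k a_k = c_l a_l$ when $\mathbf{x}^\mathbf{v}$ is not indispensable follows again by reapplying the non-isolation argument to $\mathbf{x}^\mathbf{v}$.

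The main obstacle I expect is the careful combinatorial bookkeeping in the reduction step: after subtracting the multiple of the critical binomial $f_i$ (resp. $f_j$) one must pass to the primitive/irreducible part of the resulting binomial and argue that the only way the support condition from Theorem \ref{Th OjVi1} can be respected is the stated one — in particular ruling out that the ``extra'' variables leak into $\mathrm{supp}(\mathbf{x}^\mathbf{v})$ and pinning down that $\mathbf{x}^{\mathbf{u}_i}$ (resp. $\mathbf{x}^{\mathbf{u}_j}$) must be a pure power. This is exactly the kind of case analysis on four variables that the minimality of the $c_k$'s (Proposition \ref{Prop critical0}) controls, so I would organize it as: (1) show $i \notin \mathrm{supp}(\mathbf{x}^\mathbf{v})$ using minimality of $c_i$; (2) identify the partner index $j$; (3) use the $\mathcal{A}$-degree equation to conclude $c_i a_i = c_j a_j$; (4) repeat on $\mathbf{x}^\mathbf{v}$ for the non-indispensable case.
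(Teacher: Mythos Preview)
Your general framework---use Theorem \ref{Th OjVi1} to place $x_i^{u_i}$ and $\mathbf{x}^\mathbf{v}$ in different components of $G_b(I_\mathcal{A})$, then build paths via $x_i^{u_i-c_i}\mathbf{x}^{\mathbf{u}_i}$ to derive constraints---is exactly what the paper uses. But there is a genuine gap at the step where you try to conclude $c_i a_i = c_j a_j$.

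You write that after forming $x_i^{r}\mathbf{x}^{\mathbf{u}_i} - \mathbf{x}^\mathbf{v} \in I_\mathcal{A}$, ``analyzing which variable survives'' shows $\mathrm{supp}(\mathbf{x}^{\mathbf{u}_i}) = \{j\}$, hence $c_i a_i = c_j a_j$. This is not justified: you picked an \emph{arbitrary} critical binomial $f_i = x_i^{c_i} - \mathbf{x}^{\mathbf{u}_i}$, and nothing in your path argument forces $\mathbf{x}^{\mathbf{u}_i}$ to be a pure power. What the path argument actually gives you is that $\mathrm{supp}(\mathbf{x}^{\mathbf{u}_i}) \cap \mathrm{supp}(\mathbf{x}^\mathbf{v}) = \varnothing$ (else the three-vertex path connects the two components). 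In four variables this still allows $\mathbf{x}^{\mathbf{u}_i}$ to have support of size two and $\mathbf{x}^\mathbf{v}$ to be a pure power $x_l^{v_l}$, with no equality $c_i a_i = c_j a_j$ yet in sight. Even if you did get $\mathrm{supp}(\mathbf{x}^{\mathbf{u}_i}) = \{j\}$, you would only have $c_i a_i = u_{ij} a_j$, not $c_i a_i = c_j a_j$.

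The paper closes this gap differently: it argues by contradiction, assuming $c_i a_i \neq c_j a_j$ for \emph{every} $j \neq i$, and then invokes Lemma \ref{Lemma Critical2}, which guarantees a critical binomial $x_i^{c_i} - \mathbf{x}^\mathbf{w}$ with $|\mathrm{supp}(\mathbf{x}^\mathbf{w})| \geq 2$. With that in hand, either $\mathrm{supp}(\mathbf{x}^\mathbf{w})$ meets $\mathrm{supp}(\mathbf{x}^\mathbf{v})$ (path of length two, contradiction), or $\mathbf{x}^\mathbf{v}$ is forced to be a pure power $x_l^{v_l}$; in the latter case one checks $v_l > c_l$ (so $x_l^{v_l}$ is not indispensable), produces a neighbour $\mathbf{x}^\mathbf{z}$ of $\mathbf{x}^\mathbf{v}$ with larger support, and builds a path of length three. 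The role of Lemma \ref{Lemma Critical2} is precisely the ``pinning down'' you flag as the main obstacle, and it is not recoverable from the reduction step you describe. Your outline for the ``moreover'' clause and for part (ii) inherits the same gap.
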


\begin{proof} (i) Let $b = c_i a_i.$ Since $f$ is not a critical binomial, we have that $u_i > c_i.$ If $c_i a_i \neq c_j a_j,$ for every $j \neq i,$ then, from Lemma \ref{Lemma Critical2}, there exists a critical binomial $f = x_i^{c_i} - \mathbf{x}^\mathbf{w} \in I_\mathcal{A}$ such that $\mathrm{supp}(\mathbf{x}^\mathbf{w})$ has cardinality greater than or equal to two. If $\mathrm{supp}(\mathbf{x}^\mathbf{v}) \cap \mathrm{supp}(\mathbf{x}^\mathbf{w}) \neq
\varnothing,$ then $x_i^{u_i} \longleftrightarrow x_i^{u_i-c_i} \mathbf{x}^\mathbf{w} \longleftrightarrow
\mathbf{x}^\mathbf{v}$ is a path in $G_b(I_\mathcal{A}),$ a contradiction to the fact that $f$ is a minimal
generator by Theorem \ref{Th OjVi1}. Hence $\mathrm{supp}(\mathbf{x}^\mathbf{v}) \cap
\mathrm{supp}(\mathbf{x}^\mathbf{w})=\varnothing.$ We have that $\mathrm{supp}(\mathbf{x}^{\mathbf{v}+\mathbf{w}}) \subseteq \{j,k,l\},\ \mathrm{supp}(\mathbf{x}^\mathbf{v}) \cap
\mathrm{supp}(\mathbf{x}^\mathbf{w})=\varnothing$ and the cardinality of $
\mathrm{supp}(\mathbf{x}^\mathbf{w})$ is at least two. This implies that $\mathbf{x}^\mathbf{v}$ is a power of a variable, say $\mathbf{x}^\mathbf{v}=x_l^{v_l}$.
Observe that $v_l \geq c_l$ and as $f$ is not a critical binomial, $v_l \neq c_l$, whence $\mathbf{x}^\mathbf{z} = x_l^{v_l-c_l} x_i^{u_{li}} x_k^{u_{lk}} \in \deg^{-1}_\mathcal{A}(b)$ is a monomial such that $\mathrm{supp}(\mathbf{x}^\mathbf{z})$ has cardinality greater than or equal to $2$ and $l \in \mathrm{supp}(\mathbf{x}^\mathbf{z})$.
Then $x_i^{u_i}
\longleftrightarrow x_i^{u_i-c_i} \mathbf{x}^\mathbf{w} \longleftrightarrow \mathbf{x}^\mathbf{z}
\longleftrightarrow \mathbf{x}^\mathbf{v}$ is a path in $G_b(I_\mathcal{A}),$ a contradiction. Thus $c_i a_i = c_j a_j,$ for an $j \neq i.$ We have that $\mathrm{supp}(\mathbf{x}^\mathbf{v}) \cap \{i,j\} = \varnothing;$ otherwise $x_i^{u_i} \longleftrightarrow  x_i^{u_i-c_i} x_j^{c_j} \longleftrightarrow  \mathbf{x}^\mathbf{v}$ is a path in $G_b(I_\mathcal{A}),$ a contradiction again.

Finally, if $\mathbf{x}^\mathbf{v}$ is not indispensable, then, by Theorem \ref{Thm CombIndMonJ}, there exists a monomial $\mathbf{x}^\mathbf{w} \in \mathrm{deg}_\mathcal{A}^{-1}(b) \setminus \{\mathbf{x}^\mathbf{v} \}$ such that $\mathrm{supp}(\mathbf{x}^\mathbf{w}) \cap \mathrm{supp}(\mathbf{x}^\mathbf{v}) \neq \varnothing.$ If $j \in \mathrm{supp}(\mathbf{x}^\mathbf{w}),$ then $x_i^{u_i} \longleftrightarrow x_i^{u_i-c_i}x_j^{c_j} \longleftrightarrow  \mathbf{x}^\mathbf{w} \longleftrightarrow \mathbf{x}^\mathbf{v}$ is a path in $G_b(I_\mathcal{A}),$ a contradiction to the fact that $f$ is a minimal generator. Moreover $i \notin \mathrm{supp}(\mathbf{x}^\mathbf{w}),$ 
by the minimality of $c_i$. Thus $\mathrm{supp}(\mathbf{x}^\mathbf{w}) \subseteq \{k,l\}$ and also $x_k^{v_k}x_l^{v_l}-x_k^{w_k}x_l^{w_l} \in I_\mathcal{A}$. Suppose that $c_ka_k \neq c_la_l$
Then $v_k a_k + v_l a_l = w_k a_k + w_l a_l$. Assume without loss of generality that $w_l \geq v_l.$ We have that $(v_k - w_k) a_k = (w_l - v_l) a_l \neq 0$. Hence $v_k - w_k \geq c_k$. If $w_k \neq 0$, then $v_k > c_k$. If $w_k = 0, v_k a_k = (w_l - v_l) a_l$ and $v_l \neq 0$, since $\mathrm{supp}(\mathbf{x}^\mathbf{w}) \cap \mathrm{supp}(\mathbf{x}^\mathbf{v}) \neq \varnothing$. Thus $w_l - v_l \geq c_l$ and $w_l > c_l$. By using similar arguments as in the first part of the proof we arrive at a contradiction. Consequently $c_ka_k=c_la_l$. \\(ii) The proof is an easy adaptation of the arguments used in (i).
\end{proof}

\emph{For the rest of this section we keep the same notation as in Theorem \ref{Thm Critical2}}.

The following result was first proved by Bresinsky (see \cite[Theorem 3]{Bresinsky88}), but our argument seems to be shorter and more appropriate in our context.

\begin{proposition}\label{Prop Critical3}
There exists a minimal system of binomial generators of $I_\mathcal{A}$ consisting of the union of $\mathcal{S}$ and a set of binomials in $I_\mathcal{A}$ with full support.
\end{proposition}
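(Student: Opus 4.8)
The plan is to start from the minimal generating set $\mathcal{S}$ of the critical ideal $C_\mathcal{A}$ described in Theorem \ref{Thm Critical2}, and to extend it to a minimal generating set of $I_\mathcal{A}$ by adjoining minimal generators of $I_\mathcal{A}$ that do \emph{not} lie in $C_\mathcal{A}$. The heart of the matter is therefore to show that every minimal generator $f$ of $I_\mathcal{A}$ which is not critical (more precisely, whose $\mathcal{A}$-degree is not one of the critical degrees $c_ia_i$) can be chosen to have full support. So I would fix a minimal generator $f = \mathbf{x}^{\mathbf{a}} - \mathbf{x}^{\mathbf{b}}$ of $I_\mathcal{A}$ of $\mathcal{A}$-degree $\mathbf{b}' \notin \{c_1a_1,\ldots,c_4a_4\}$, and argue about its support using the graph $G_{\mathbf{b}'}(I_\mathcal{A})$ together with Theorem \ref{Th OjVi1}.

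The key steps, in order, are as follows. First, reduce to the case $n=4$ and observe that if $\mathrm{supp}(f) \subseteq \{x_i,x_j,x_k\}$ for some three-element set, then $f \in I_\mathcal{A} \cap \mathbbmss{k}[x_i,x_j,x_k]$, which is the defining ideal of a monomial space curve, hence by Herzog's result (\cite{Herzog70}, quoted in Section 2) is generated by critical binomials; so $f$ would be a $\mathbbmss{k}[\mathbf{x}]$-combination of critical binomials of that subring, contradicting that $f$ is a minimal generator of $I_\mathcal{A}$ in a degree different from all $c_ia_i$ (one has to be slightly careful: use that the $\mathcal{A}$-degree of $f$ differs from the critical degrees, so $f$ cannot be one of those critical binomials, and then a standard degree/connectedness argument via Theorem \ref{Th OjVi1} gives a path in $G_{\mathbf{b}'}(I_\mathcal{A})$ between the two monomials of $f$). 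Second, rule out the binomials whose support has cardinality $\leq 2$: these are (up to sign) circuits, and a circuit of degree $b$ can only be a minimal generator when $b \notin \sum_{k} \mathbb{N}a_k$ outside its two indices — but such a circuit is then critical (Theorem \ref{Indisp Circuits1} and the remark after it), again contradicting our degree assumption. Third, conclude that $\mathrm{supp}(f) = \{1,2,3,4\}$, i.e. $f$ has full support. Finally, assemble: take $\mathcal{S}$ together with a minimal set of such full-support binomials, one per $\mathcal{A}$-degree carrying minimal generators of $I_\mathcal{A}$ outside the critical degrees, with the right multiplicity $\dim_\mathbbmss{k}\mathrm{Tor}_1^R(\mathbbmss{k},\mathbbmss{k}[\mathcal{A}])_\mathbf{b}$; minimality of the whole set follows because in a critical degree the monomial $x_i^{c_i}$ is indispensable (Proposition \ref{Prop critical0}, Corollary \ref{Cor CombIndMinJ}) while in a non-critical degree the generators we add are genuinely needed by the $\mathrm{Tor}$-count.

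The main obstacle I anticipate is the support-reduction argument: showing rigorously that a minimal generator of $I_\mathcal{A}$ of non-critical $\mathcal{A}$-degree cannot have support of size $\leq 3$. For size $3$ one leans on Herzog's description of monomial space curve ideals, but one must check that passing to the subring is legitimate (\cite[Proposition 4.13(a)]{Sturmfels95}) and that "minimal generator of the subideal" interacts correctly with "minimal generator of $I_\mathcal{A}$" in the given degree — here Theorem \ref{Th OjVi1} applied to $G_{\mathbf{b}'}(I_\mathcal{A})$ versus $G_{\mathbf{b}'}(I_{\mathcal{A}'})$ is the clean way to phrase it, since $G_{\mathbf{b}'}(I_{\mathcal{A}'})$ is a full subgraph of $G_{\mathbf{b}'}(I_\mathcal{A})$ and any extra vertex of the latter involving the fourth variable either merges the two components (contradiction) or forces the degree to be critical. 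For size $2$, Lemma \ref{Lemma Circuits2} and Theorem \ref{Indisp Circuits1} do the work, together with the observation recorded just after Theorem \ref{Indisp Circuits1} that an indispensable circuit is critical. Once these exclusions are in place, the rest is bookkeeping with the $\mathcal{A}$-graded $\mathrm{Tor}$ and the indispensability of the $x_i^{c_i}$.
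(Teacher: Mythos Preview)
Your argument has a genuine gap. You claim that a minimal generator of $I_\mathcal{A}$ in a non-critical $\mathcal{A}$-degree cannot have support of size $\leq 3$, and you try to derive this from Herzog's description of $I_{\mathcal{A}'}$ for $\mathcal{A}' = \{a_i,a_j,a_k\}$. The flaw is in the parenthetical step: you say ``the $\mathcal{A}$-degree of $f$ differs from the critical degrees, so $f$ cannot be one of those critical binomials.'' But the critical exponents $c_i'$ of the three-variable subring satisfy $c_i' \geq c_i$, often strictly, so the critical degrees of the subring are \emph{not} the $c_ia_i$. Thus $f$ can perfectly well be a critical binomial of $I_{\mathcal{A}'}$ (hence a minimal generator there, and hence a minimal generator of $I_\mathcal{A}$ by your own subgraph argument) while $\deg_\mathcal{A}(f)$ is none of the $c_ia_i$. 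Concretely, for $\mathcal{A}=\{6,8,17,19\}$ (the paper's example) the binomial $x_1^6 - x_3x_4$ is a minimal generator of $I_\mathcal{A}$ of degree $36 \notin \{24,34,38\}$ with support $\{1,3,4\}$; here $c_1 = 4$ but the critical exponent of $x_1$ in $\mathbbmss{k}[x_1,x_3,x_4]$ is $6$.

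The paper does not try to exclude such generators; it \emph{replaces} them. Lemma~\ref{Lemma Critical3}(i) shows that whenever a non-critical minimal generator $f = x_i^{u_i} - \mathbf{x}^\mathbf{v}$ (with $u_i>c_i$) exists, one must have $c_ia_i = c_ja_j$ for the missing index $j$, so the circuit $x_i^{c_i}-x_j^{c_j}$ lies in $\mathcal{S}$. One then swaps $f$ for $g = f - x_i^{u_i-c_i}(x_i^{c_i}-x_j^{c_j}) = x_i^{u_i-c_i}x_j^{c_j} - \mathbf{x}^\mathbf{v}$, which has the same degree, is still a minimal generator (the two $x_i$-monomials lie in the same component of $G_b(I_\mathcal{A})$), and now involves $x_j$; a second application handles the case where $\mathbf{x}^\mathbf{v}$ is a pure power. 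This modification step, driven by Lemma~\ref{Lemma Critical3}, is the missing idea in your outline. The minimality of the resulting system is then checked directly via the indispensability of the $x_i^{c_i}$ together with the case list of Theorem~\ref{Thm Critical2}, rather than by a Tor-count.
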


\begin{proof}
By Lemma \ref{Lemma Critical3} (i), if for instance $f = x_i^{u_i} - \mathbf{x}^\mathbf{v}$ is 
in a minimal generating set of $I_\mathcal{A}$ and it is not a critical binomial with respect to any variable, then $c_ia_i=c_ja_j$, for $j \neq i$. We replace $f$ by $g = f - x_i^{u_i - c_i} (x_i^{c_i} - x_j^{c_j}) = x_i^{u_i-c_i} x_j^{c_j} - \mathbf{x}^\mathbf{v} \in I_\mathcal{A}$ in the minimal generating set of of $I_\mathcal{A}.$ Moreover, either $\mathrm{supp}(\mathbf{x}^\mathbf{v}) = \{k,l\}$ and $\{k,l\} \cap \{i,j\} = \varnothing,$ so $g$ has full support, or $\mathbf{x}^\mathbf{v}$ is a power of a variable, say $\mathbf{x}^\mathbf{v}=x_k^{v_k}$, with $v_k > c_k.$ In this case, by using again Lemma \ref{Lemma Critical3} (i), we replace $g$ with  $h = g + x_k^{v_k - c_k} (x_k^{c_k} - x_l^{c_l}) =
x_i^{u_i-c_i} x_j^{c_j}-x_k^{v_k - c_k} x_l^{c_l} \in I_\mathcal{A}$ with $\{k,l\} \cap \{i,j\} = \varnothing.$ Hence, there exists a system of generators of $I_\mathcal{A}$ consisting of the union of a system of binomials generators of $C_\mathcal{A}$ and a set  $\mathcal{S}'$ of binomials in $I_\mathcal{A}$ with full support. Furthermore, by Theorem \ref{Thm Critical2}, we may assume that $\mathcal{S}$ is a system of binomials generators of $C_\mathcal{A}$.

Now, let $f = x_i^{c_i} - \mathbf{x}^\mathbf{u} \in \mathcal{S}$ and suppose that $f = \sum_{n=1}^s g_n f_n$ where every $f_n \in (\mathcal{S} \setminus \{f\} ) \cup \mathcal{S}'.$ From the minimality of $c_i$ we have that $f_n = \pm (x_i^{c_i} - \mathbf{x}^\mathbf{v})$ and $|g_n|= 1,$ for some $n.$ Then, according to the cases in Theorem \ref{Thm Critical2}, either $\mathbf{x}^\mathbf{u}$ or $\mathbf{x}^\mathbf{v}$ is equal to $x_j^{c_j},$ for some $j \neq i.$ Now in the above expression of $f$ the term $x_j^{c_j}$ should be canceled, so, from the minimality of $c_j,$ we have $f_m = \pm (x_j^{c_j} - \mathbf{x}^\mathbf{w})$ and $|g_m|= 1,$ for an $m \neq n.$ Therefore, we conclude that either $\{x_i^{c_i} - x_j^{c_j}, \pm (x_i^{c_i} - \mathbf{x}^\mathbf{v}), \pm(x_j^{c_j} - \mathbf{x}^\mathbf{w})\}$ or $\{x_i^{c_i} - \mathbf{x}^\mathbf{u}, \pm(x_i^{c_i} - x_j^{c_j}), \pm(x_j^{c_j} - \mathbf{x}^\mathbf{w})\}$ is a subset of $\mathcal{S}.$ So, the only possible case is $\mathcal{S} = \{x_1^{c_1} - x_2^{c_2}, x_2^{c_2} - x_3^{c_3}, x_3^{c_3} - x_4^{c_4}\}.$ Since, in this case, $I_\mathcal{A} = C_\mathcal{A}$ by Theorem \ref{Th Result}, and $\mathcal{S}' = \varnothing,$ we are done.
\end{proof}

From the above proposition it follows that $I_\mathcal{A}$ is generic (see, e.g. \cite{Ojeda}) only in the CASE 1. The next theorem provides a minimal generating set for $I_\mathcal{A}$.

\begin{theorem}\label{Th Main}
The union of $\mathcal{S},$ the set $\mathcal{I}$ of all binomials $x_{i_1}^{u_{i_1}} x_{i_2}^{u_{i_2}} - x_{i_3}^{u_{i_3}} x_{i_4}^{u_{i_4}} \in I_\mathcal{A}$ with $0 < u_{i_j} < c_j,\ j = 1,2$, $u_{i_3}>0$, $u_{i_4}>0$ and $x_{i_3}^{u_{i_3}} x_{i_4}^{u_{i_4}}$ indispensable, and the set $\mathcal{R}$ of all binomials $x_1^{u_1} x_2^{u_2} - x_3^{u_3} x_4^{u_4} \in I_\mathcal{A} \setminus \mathcal{I}$ with full support such that
\begin{itemize}
\item $u_1 \leq c_1$ and $x_3^{u_3} x_4^{u_4}$ is indispensable, in the CASES 2(a) and 4(b).
\item $u_1 \leq c_1$ and/or $u_3 \leq c_3$ and there is no $x_1^{v_1} x_2^{v_2} - x_3^{v_3} x_4^{v_4} \in I_\mathcal{A}$ with full support such that $x_1^{v_1} x_2^{v_2}$ properly divides $x_1^{u_1 + \alpha c_1} x_2^{u_2 - \alpha c_2}$ or $x_3^{v_3} x_4^{v_4}$ properly divides $x_3^{u_3 + \alpha c_3} x_4^{u_4 - \alpha u_4}$ for some $\alpha \in \mathbb{N},$ in the CASE 2(b).
\end{itemize}
is a minimal system of generators of $I_\mathcal{A}$ (up to permutation of indices).
\end{theorem}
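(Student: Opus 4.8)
The starting point is Proposition \ref{Prop Critical3}: there exists a minimal system of binomial generators of $I_\mathcal{A}$ consisting of $\mathcal{S}$ together with a collection of binomials having full support. So the whole task reduces to identifying, among binomials in $I_\mathcal{A}$ with full support, exactly which ones are forced to appear and in what normalized shape, and to show the particular set $\mathcal{I}\cup\mathcal{R}$ described in the statement does the job (up to permuting indices). The overall strategy is: (1) normalize an arbitrary full-support minimal generator; (2) show it can be replaced, modulo $\mathcal{S}$ and the already-chosen binomials, by one of the listed shapes; (3) verify minimality using the graphs $G_\mathbf{b}(I_\mathcal{A})$ and the indispensability criteria (Theorems \ref{Thm CombIndMonJ}, \ref{Thm CombIndBinJ}, \ref{Th indispensablebinom3}, Corollary \ref{Cor indispensablebinom3}).

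For step (1), let $f=\mathbf{x}^\mathbf{u}-\mathbf{x}^\mathbf{v}$ be a minimal generator of full support. By Lemma \ref{Lemma Critical3}(ii), if some variable appears in $\mathbf{x}^\mathbf{u}$ with exponent $\ge c_j$, then one can trade it down using the relevant critical binomial $x_j^{c_j}-\cdots$ from $\mathcal{S}$, producing another minimal generator; iterating and using that there are only four variables, one reaches a binomial of the form $x_{i_1}^{u_{i_1}}x_{i_2}^{u_{i_2}}-x_{i_3}^{u_{i_3}}x_{i_4}^{u_{i_4}}$ with each exponent on at least one side strictly below the corresponding $c$. This is where the case distinction of Theorem \ref{Thm Critical2} enters: the available critical binomials (i.e.\ the precise form of $\mathcal{S}$) differ in CASES 1, 2(a)--(d), 3, 4(a), 4(b), and the reductions one can perform, hence the residual shapes of the full-support generators, depend on which equalities $c_ia_i=c_ja_j$ hold. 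In CASES with ``enough'' critical binomials of the plain form $x_i^{c_i}-x_j^{c_j}$ (notably 2(b), 2(c), 4(b)), extra reduction is possible and forces the more restrictive conditions listed in the two bullet points — e.g.\ the requirement $u_1\le c_1$ and indispensability of $x_3^{u_3}x_4^{u_4}$ in 2(b),(4b), and the ``no properly dividing full-support binomial'' minimality clause in 2(c). For step (3), once a residual full-support generator $f=x_{i_1}^{u_{i_1}}x_{i_2}^{u_{i_2}}-x_{i_3}^{u_{i_3}}x_{i_4}^{u_{i_4}}$ with $u_{i_j}<c_j$ for $j=1,2$ is in hand: if it is primitive one invokes Theorem \ref{Th indispensablebinom3} to see it is indispensable, placing it in $\mathcal{I}$; if it is not primitive, peel off a proper sub-binomial — which by the support-minimality of circuits and Lemma \ref{Lemma Circuits2} cannot have support in $\{i_1,i_2,i_3,i_4\}$ reduced further without breaking full support — and use Corollary \ref{Cor indispensablebinom3} together with indispensability of the right-hand monomial to land it in $\mathcal{I}$ or, in the special cases, in $\mathcal{R}$. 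The graphs $G_\mathbf{b}(I_\mathcal{A})$ are the bookkeeping device throughout: a binomial $\mathbf{x}^\mathbf{u}-\mathbf{x}^\mathbf{v}$ of $\mathcal{A}$-degree $\mathbf{b}$ is a minimal generator iff $\mathbf{x}^\mathbf{u},\mathbf{x}^\mathbf{v}$ lie in distinct components of $G_\mathbf{b}(I_\mathcal{A})$ (Theorem \ref{Th OjVi1}), and one checks that after all replacements no spurious path connecting the two monomials of a listed binomial is created, while conversely every minimal generator not already accounted for by $\mathcal{S}$ is connected — via the reductions above — to a member of $\mathcal{I}\cup\mathcal{R}$.

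The main obstacle I anticipate is the bookkeeping in step (2)--(3) across the CASES of Theorem \ref{Thm Critical2}: one must show both completeness (every minimal generator is in $\mathcal{S}\cup\mathcal{I}\cup\mathcal{R}$ up to the permitted replacements) and irredundancy (no element of the proposed set is in the ideal generated by the others), and the latter in the degenerate CASES 2(b), 2(c), 4(b) is delicate precisely because several generators may share the same $\mathcal{A}$-degree $b$ with $b-a_k\in\mathbb{N}\mathcal{A}$ for some $k$, so $G_b(I_\mathcal{A})$ has more than two vertices and indispensability of the binomial can fail even though indispensability of one monomial persists. Handling 2(c), where $I_\mathcal{A}=C_\mathcal{A}$ fails to hold in general yet the critical part is just $\{x_1^{c_1}-x_2^{c_2},x_3^{c_3}-x_4^{c_4}\}$, will require the explicit ``properly divides'' minimality condition in the statement and a careful argument that such an $f$, while not necessarily indispensable, cannot be dropped from any minimal generating set — essentially a direct application of Theorem \ref{Th OjVi1} to the fiber of $\deg_\mathcal{A}(f)$. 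Once the case analysis is organized around which equalities among the $c_ia_i$ hold — exactly the organizing principle already used in Theorem \ref{Thm Critical2} — each individual verification is a short computation with the reduction moves $f\mapsto f\pm\mathbf{x}^{\mathbf{w}}(x_i^{c_i}-\mathbf{x}^{\cdot})$ and an appeal to Lemma \ref{Lemma Critical3}, Theorem \ref{Th indispensablebinom3}, and Corollary \ref{Cor indispensablebinom3}.
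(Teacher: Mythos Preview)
Your overall plan is on the right track---starting from Proposition~\ref{Prop Critical3}, organizing by the CASES of Theorem~\ref{Thm Critical2}, and using the graph criterion Theorem~\ref{Th OjVi1} together with Lemma~\ref{Lemma Critical3} and Corollary~\ref{Cor indispensablebinom3}---and this is indeed how the paper proceeds. But two points in your step~(3) are off and would derail the argument as written.

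First, the role of primitivity is not what you describe. Membership in $\mathcal{I}$ is not decided by ``if $f$ is primitive use Theorem~\ref{Th indispensablebinom3}, otherwise peel off a sub-binomial''; Theorem~\ref{Th indispensablebinom3} needs \emph{all four} exponents strictly below the corresponding $c_j$, and the defining condition for $\mathcal{I}$ is that the two exponents on one side are below $c$ \emph{and} the opposite monomial is indispensable. The paper simply observes that every element of $\mathcal{I}$ is indispensable by Corollary~\ref{Cor indispensablebinom3}, hence $\mathcal{I}\subseteq\mathcal{S}'$ automatically, and then \emph{defines} $\mathcal{R}$ to be $\mathcal{S}'\setminus\mathcal{I}$: those full-support minimal generators in which some exponent is $\ge c_j$.

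Second, and more importantly, you are missing the structural step that collapses most of the case analysis. The paper does not ``normalize'' in every CASE; instead it shows that $\mathcal{R}=\varnothing$ outright in CASES 1, 2(a), 2(d), 3 and 4(a). The mechanism is: if $x_1^{u_1}x_2^{u_2}-x_3^{u_3}x_4^{u_4}\in\mathcal{R}$ with (say) $u_2\ge c_2$, then Lemma~\ref{Lemma Critical3}(ii) forces $c_1a_1=c_2a_2$, ruling out CASE~1; and if moreover $c_2a_2=c_ia_i$ for some $i\in\{3,4\}$, or if there is a critical binomial $x_2^{c_2}-\mathbf{x}^{\mathbf{u}_2}$ with $\mathrm{supp}(\mathbf{x}^{\mathbf{u}_2})=\{3,4\}$ in $\mathcal{S}$, one exhibits an explicit path in $G_b(I_\mathcal{A})$ joining $x_1^{u_1}x_2^{u_2}$ to $x_3^{u_3}x_4^{u_4}$, contradicting minimality via Theorem~\ref{Th OjVi1}. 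This eliminates 2(a), 2(d), 3, 4(a) at once. Only then does one turn to 2(b), 4(b) and 2(c), where a further application of Lemma~\ref{Lemma Critical3}(ii) shows every $f\in\mathcal{R}$ has the shape $x_1^\bullet x_2^\bullet-x_3^\bullet x_4^\bullet$, after which the normalization $u_1\mapsto u_1-\alpha c_1$, $u_2\mapsto u_2+\alpha c_2$ using $x_1^{c_1}-x_2^{c_2}\in\mathcal{S}$ puts $f$ into the form stated in the bullets. Without this ``$\mathcal{R}=\varnothing$ in five of the eight cases'' observation your plan leaves a lot of unnecessary work, and your description of how to land generators in $\mathcal{I}$ versus $\mathcal{R}$ would not converge.
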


\begin{proof}
By Proposition \ref{Prop Critical3}, there exists a minimal system of binomial generators $\mathcal{S} \cup \mathcal{S}'$ of $I_\mathcal{A}$ such that $\mathcal{S}$ is a minimal system of generators of $C_\mathcal{A}$ and $\mathrm{supp}(f) = \{1,2,3,4\},$ for every $f \in \mathcal{S}'.$ Moreover, since all the binomials in the set $\mathcal{I}$ are indispensable by Corollary \ref{Cor indispensablebinom3}, we have $\mathcal{S}' = \mathcal{I} \cup \mathcal{R},$ where $\mathcal{R}$ is a set of binomials of $I_\mathcal{A}$ of the form $x_{i_1}^{u_{i_1}} x_{i_2}^{u_{i_2}} - x_{i_3}^{u_{i_3}} x_{i_4}^{u_{i_4}}$ with $u_{i_j} \neq 0$, for every $j$, and $u_{i_j} \geq c_j$ for some $j.$

Observe that if $\mathcal{R} = \varnothing,$ then the set defined in the statement of the theorem coincides with $\mathcal{S} \cup \mathcal{S}'$ and therefore it is a minimal set of generators. So, we assume that $\mathcal{R} \neq \varnothing,$ that is to say, there exists a minimal generator $x_1^{u_1} x_2^{u_2} - x_3^{u_3} x_4^{u_4} \in \mathcal{R}$ with $u_2 \geq c_2$ (by permuting variables if necessary). By Lemma \ref{Lemma Critical3} (ii) it holds that $c_1 a_1 = c_2 a_2$, so in CASE 1 we have  $\mathcal{R} = \varnothing$ and therefore we are done. Moreover, if $c_2 a_2 = c_i a_i,$ for an $i \in \{3,4\},$ then $x_1^{u_1} x_2^{u_2} \longleftrightarrow x_1^{u_1} x_2^{u_2-c_2} x_i^{c_i} \longleftrightarrow x_3^{u_3} x_4^{u_4}$ is a path in $G_b(I_\mathcal{A}),$ where $ b = u_1 a_1 + u_2 a_2,$ a contradiction with Theorem \ref{Th OjVi1}. Therefore, we conclude that the theorem is also true in CASE 2(c) and CASE 3. Notice that, in the CASE 4(a), we can proceed similarly to reach a contradiction; indeed, since $x_2^{c_2} - \mathbf{x}^\mathbf{v} \in \mathcal{S},$ where $\mathrm{supp}(\mathbf{x}^\mathbf{v})=\{3,4\}$, then $x_1^{c_1} - \mathbf{x}^\mathbf{v} \in I_\mathcal{A}$ and therefore $x_1^{u_1} x_2^{u_2} \longleftrightarrow x_1^{u_1+c_1} x_2^{u_2-c_2} \longleftrightarrow x_1^{u_1} x_2^{u_2-c_2} \mathbf{x}^\mathbf{v} \longleftrightarrow x_3^{u_3} x_4^{u_4}$ is a path in $G_b(I_\mathcal{A}),$ a contradiction with Theorem \ref{Th OjVi1}. Thus $\mathcal{R} = \varnothing$ in CASE 4(a), too.

Suppose now that $x_1^{v_1} x_i^{v_i} - x_2^{v_2} x_j^{v_j} \in \mathcal{R}.$ By Lemma \ref{Lemma Critical3} (ii) again, we obtain that at least one of the equalities $c_1 a_1 = c_i a_i$ and $c_2 a_2 = c_j a_j$ holds. But, as we proved above, these equalities are incompatible with the condition $x_1^{u_1} x_2^{u_2} - x_3^{u_3} x_4^{u_4} \in \mathcal{R}$ with $u_2 \geq c_2.$ Hence, all the binomials in $\mathcal{R}$ are of the form $x_1^\bullet x_2^\bullet - x_3^\bullet x_4^\bullet$ and $x_2$ arises, with exponent greater than or equal to $2$, in at least one of the variables.

We distinguish the following cases:

\framebox{CASE 2(a) or 4(b).} If there exists $x_1^{v_1} x_2^{v_2} - x_3^{v_3} x_4^{v_4} \in \mathcal{R}$ such that for instance $v_4 \geq c_4$, then $c_3 a_3 = c_4 a_4$ by Lemma \ref{Lemma Critical3} (ii). This is clearly incompatible with CASES 2(a) and 4(b), since $x_3^{v_3} x_4^{v_4} \longleftrightarrow x_3^{v_3} x_4^{v_4-c_4} \mathbf{x}^{\mathbf{u}_4} \longleftrightarrow x_1^{v_1} x_2^{v_2}$ is a path in $G_d(I_\mathcal{A}),\ d = a_1 v_1 + a_2 v_2,$ a contradiction with Theorem \ref{Th OjVi1}. Thus the binomials in $\mathcal{R}$ are of the form $x_1^{u_1} x_2^{u_2} - x_3^{u_3} x_4^{u_4}$ with $u_i < c_i,\ i = 3,4.$ If $x_3^{u_3} x_4^{u_4}$ is not indispensable, then there exists $x^\mathbf{v} - x_3^{v_3} x_4^{v_4} \in I_\mathcal{A}$ such that $0 < v_i \leq u_i,$ for $i = 3,4$, with at least one inequality strict and $\mathrm{supp}(x^\mathbf{v}) \subseteq \{1,2\}.$ So, $x_3^{u_3} x_4^{u_4} \longleftrightarrow x_3^{u_3-v_3} x_4^{u_4-v_4} \mathbf{x}^{\mathbf{v}} \longleftrightarrow x_1^{u_1} x_2^{u_2}$ is a path in $G_b(I_\mathcal{A})$ where $ b = a_3 u_3 + a_4 u_4,$ a contradiction with Theorem \ref{Th OjVi1}. Moreover, since $x_1^{c_1} - x_2^{c_2} \in I_\mathcal{A}$, we may change, if it is necessary, $\mathcal{R}$ by replacing every binomial $x_1^{u_1} x_2^{u_2} - x_3^{u_3} x_4^{u_4}$, where $u_1>c_1$, with $x_1^{u_1-\alpha c_1} x_2^{u_2 + \alpha c_2} - x_3^{u_3} x_4^{u_4} \in I_\mathcal{A}$ such that $0 < u_1-\alpha c_1 \leq c_1$ and $u_2 + \alpha c_2 \geq c_2.$ Now the new set $\mathcal{S} \cup \mathcal{I} \cup \mathcal{R}$ has the desired form. We have that $$x_1^{u_1} x_2^{u_2} - x_3^{u_3} x_4^{u_4}=(x_1^{u_1-\alpha c_1} x_2^{u_2 + \alpha c_2} - x_3^{u_3} x_4^{u_4})+x_1^{u_1-\alpha c_1} x_2^{u_2}(x_1^{\alpha c_1}-x_2^{\alpha c_2}),$$ so $\mathcal{S} \cup \mathcal{I} \cup \mathcal{R}$ is a generating set of $I_\mathcal{A}$. To see that this is actually minimal, by indispensability reasons, it suffices to show that if $x_1^{u_1} x_2^{u_2} - x_3^{u_3} x_4^{u_4} \in \mathcal{R}$ and $x_1^{v_1} x_2^{v_2} - x_3^{u_3} x_4^{u_4} \in \mathcal{S} \cup \mathcal{I} \cup \mathcal{R},$ then $x_1^{u_1} x_2^{u_2} = x_1^{v_1} x_2^{v_2}.$ Otherwise $x_1^{u_1} x_2^{u_2} - x_1^{v_1} x_2^{v_2} \in I_\mathcal{A},$ but $0 < u_1 \leq c_1$ and $v_1 \leq c_1.$ Thus $\vert u_1 - v_1 \vert \leq c_1,$ so $u_1 = c_1, v_1 = 0$ and therefore $v_2 = c_2$, since every binomial in $\mathcal{S} \cup \mathcal{I} \cup \mathcal{R}$ with cardinality less than four is critical. We have that $c_1 a_1 + a_2 u_2 = c_2 a_2$ and also $c_1 a_1 = c_2 a_2$, so $u_2 = 0$ a contradiction.

\framebox{CASE 2(b).} Now, by modifying $\mathcal{R}$ as in the previous case if necessary, we have that the binomials in $\mathcal{R}$ are of the following form: $x_1^{u_1} x_2^{u_2} - x_3^{u_3} x_4^{u_4}$ with $0 < u_1 \leq c_1, u_2 \neq 0$ and/or $0 < u_3 \leq c_3, u_4 \neq 0.$ If there exists $\alpha \in \mathbb{N}$
and $x_1^{v_1} x_2^{v_2} - x_3^{v_3} x_4^{v_4} \in I_\mathcal{A}$ with full support such that  $x_1^{u_1+\alpha c_1} x_2^{u_2-\alpha c_2} = m x_1^{v_1} x_2^{v_2}$ (or $x_3^{u_3+\alpha c_3} x_4^{u_4-\alpha c_4} = m x_3^{v_3} x_4^{v_4},$ respectively)  with $m \neq 1,$ then $x_1^{u_1} x_2^{u_2} \longleftrightarrow m x_3^{v_3} x_4^{v_4} \longleftrightarrow x_3^{u_3} x_4^{u_4}$ (or $x_1^{u_1} x_2^{u_2} \longleftrightarrow x_1^{v_1} x_2^{v_2} m \longleftrightarrow x_3^{u_3} x_4^{u_4},$ respectively) is a path in $G_b(I_\mathcal{A})$, where $b = u_1 a_1 + u_2 a_2,$ a contradiction with Theorem \ref{Th OjVi1}. So, we conclude that all the binomials in $\mathcal{R}$ are of the desired form. Moreover, given $f = x_1^{u_1} x_2^{u_2} - x_3^{u_3} x_4^{u_4} \in \mathcal{R}$ and a monomial $\mathbf{x}^\mathbf{v}$ with $\deg_\mathcal{A}(\mathbf{x}^\mathbf{v})= u_1 a_1 + u_2 a_2 ,$ then either $v_1 = v_2 = 0$ or $v_1=v_3 = v_4 = 0$ and $v_2>c_2.$ Indeed, since  $x_1^{u_1} x_2^{u_2} - x_1^{v_1} x_2^{v_2} x_3^{v_3} x_4^{v_4} \in I_\mathcal{A},$ then
\begin{itemize}
\item[(i)] $g = x_1^{u_1-v_1} x_2^{u_2-v_2} - x_3^{v_3} x_4^{v_4} \in I_\mathcal{A},$ when $v_1 \leq u_1$ and $v_2 <u_2.$ If $g$ has full support, then $v_1=v_2 =0,$ otherwise $f \not\in \mathcal{R}.$ If for instance $u_1-v_1 = 0,$ then $u_2-v_2 \geq c_2,$ because of the minimality of $c_2.$ Thus, $g' = x_1^{u_1-v_1+c_1} x_2^{u_2-v_2-c_2} - x_3^{v_3} x_4^{v_4} \in I_\mathcal{A}.$ If $g'$ has full support, then $v_1=v_2 =0;$ otherwise the monomial $x_1^{u_1-v_1+c_1} x_2^{u_2-v_2-c_2}$ properly divides $x_1^{u_1+c_1} x_2^{u_2-c_2},$ that is to say,  $f \not\in \mathcal{R}.$ If $g'$ does not have full support, say $v_3 = 0,$ then $v_4 \geq c_4$ (due to the minimality of $c_4$). So, we may define $g'' =  x_1^{u_1-v_1+c_1} x_2^{u_2-v_2-c_2} - x_3^{c_3} x_4^{v_4-c_4} \in I_\mathcal{A}$ and conclude that $v_1 = v_2 = 0,$ as before.
\item[(ii)] $g = x_1^{u_1-v_1} - x_2^{v_2-u_2} x_3^{v_3} x_4^{v_4} \in I_\mathcal{A},$ when $v_1<u_1$ and $v_2 \geq u_2.$ Since $0 < u_1 \leq c_1,$ we have that $v_1 =0$ and also $u_1=c_1$. Thus $v_2-u_2 = c_2$ and $v_3 = v_4 = 0$, since $x_1^{c_1} - x_2^{c_2}$ is the only critical binomial with respect to $x_1$.
\item[(iii)] $g = x_2^{u_2-v_2} - x_1^{v_1-u_1} x_3^{v_3} x_4^{v_4} \in I_\mathcal{A},$ when $v_1 \geq u_1$ and $v_2<u_2.$ Now, by the minimality of $c_2,$ we have that $u_2 - v_2 \geq c_2$ and therefore $h = x_1^{c_1} x_2^{u_2-v_2-c_2} - x_1^{v_1-u_1} x_3^{v_3} x_4^{v_4} \in I_\mathcal{A}.$ So, either $x_1^{c_1+u_1-v_1} x_2^{u_2-v_2-c_2} - x_3^{v_3} x_4^{v_4} \in I_\mathcal{A},$ when $c_1 \geq v_1-u_1,$ or $x_2^{u_2-v_2-c_2} - x_1^{v_1-u_1-c_1} x_3^{v_3} x_4^{v_4} \in I_\mathcal{A},$ when $c_1 < v_1-u_1.$ In the first case we proceed as in (i), while in the other we repeat the same argument and so on. This process can not continue indefinitely, since there exists $\alpha \in \mathbb{N}$ such that $\alpha c_1 < v_1-u_1$, and thus we are done.
\end{itemize}
From Theorem \ref{Th OjVi1} we have that there exists a minimal generator of $\mathcal{A}-$degree $\deg_\mathcal{A}(f)$ for each $f \in \mathcal{R}.$ Furthermore, by direct checking one can show that all the binomials in $\mathcal{I} \cup \mathcal{R}$ have a different $\mathcal{A}-$degree, and all these $\mathcal{A}-$degrees are different from both $c_1 a_1 $ and $c_2 a_2.$ Thus, we conclude that $\mathcal{S} \cup \mathcal{I} \cup \mathcal{R}$ is a minimal system of generators of $I_\mathcal{A}.$
\end{proof}

Combining Theorem \ref{Th Main} with Corollaries \ref{Cor Critical} and \ref{Cor indispensablebinom3} yields the following theorem.

\begin{theorem}\label{Thm Critical1} With the same notation as in Theorem \ref{Th Main}, the ideal $I_\mathcal{A}$ has a unique minimal system of generators if, and only if, $C_\mathcal{A}$ has a unique minimal system of generators and $\mathcal{R} = \varnothing.$
\end{theorem}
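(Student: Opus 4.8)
The plan is to read off both implications from Theorem~\ref{Th Main}, together with Corollary~\ref{Cor Critical}, Corollary~\ref{Cor indispensablebinom3} and Theorem~\ref{Thm Critical11}. Throughout I will use the standard fact that, for an ideal of this kind, having a unique minimal system of (binomial) generators is the same as being generated by its indispensable binomials: the set of indispensable binomials, when it generates, is contained in every minimal system and hence is the only one.

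I would prove the ``if'' direction first. Assume $C_\mathcal{A}$ has a unique minimal system of generators and $\mathcal{R}=\varnothing$. Then, by Theorem~\ref{Th Main}, $\mathcal{S}\cup\mathcal{I}$ is a minimal system of binomial generators of $I_\mathcal{A}$, so it is enough to check that each of its members is indispensable of $I_\mathcal{A}$. For the binomials in $\mathcal{I}$ this is exactly the content of Corollary~\ref{Cor indispensablebinom3}. A binomial in $\mathcal{S}$ is a critical binomial of $I_\mathcal{A}$ by Theorem~\ref{Thm Critical2}; since $\mathcal{S}$ is the unique minimal system of generators of $C_\mathcal{A}$, it is indispensable of $C_\mathcal{A}$, and hence, by Theorem~\ref{Thm Critical11}, indispensable of $I_\mathcal{A}$. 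Consequently $I_\mathcal{A}$ is generated by its indispensable binomials and therefore has a unique minimal system of generators.

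For the ``only if'' direction, assume $I_\mathcal{A}$ has a unique minimal system of generators. Then $C_\mathcal{A}$ does as well by Corollary~\ref{Cor Critical}, so it remains to prove $\mathcal{R}=\varnothing$, which I would do by contradiction. Suppose $\mathcal{R}\neq\varnothing$. As established in the proof of Theorem~\ref{Th Main}, after permuting the variables there is a binomial $f=x_1^{u_1}x_2^{u_2}-x_3^{u_3}x_4^{u_4}\in\mathcal{R}$ with $u_1>0$ (it has full support) and $u_2\ge c_2$; being a member of the minimal system $\mathcal{S}\cup\mathcal{I}\cup\mathcal{R}$, it is a minimal generator, so Lemma~\ref{Lemma Critical3}(ii) gives $c_1a_1=c_2a_2$ and hence $x_1^{c_1}-x_2^{c_2}\in C_\mathcal{A}=\langle\mathcal{S}\rangle$. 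Put $g=x_1^{u_1+c_1}x_2^{u_2-c_2}-x_3^{u_3}x_4^{u_4}$. Then $g-f=x_1^{u_1}x_2^{u_2-c_2}(x_1^{c_1}-x_2^{c_2})\in\langle\mathcal{S}\rangle$, so $g\in I_\mathcal{A}$ and the set $\mathcal{G}:=\big((\mathcal{S}\cup\mathcal{I}\cup\mathcal{R})\setminus\{f\}\big)\cup\{g\}$ still generates $I_\mathcal{A}$. Since $u_1>0$, the monomial $x_1^{u_1+c_1}x_2^{u_2-c_2}$ differs from both $x_1^{u_1}x_2^{u_2}$ and $x_3^{u_3}x_4^{u_4}$, so $g\neq\pm f$; as no two distinct members of a minimal system of binomial generators are negatives of one another, neither $f$ nor $-f$ belongs to $\mathcal{G}$, whence $\mathcal{G}\neq\mathcal{S}\cup\mathcal{I}\cup\mathcal{R}$. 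Finally, $\mathcal{G}$ cannot have fewer elements than $\mathcal{S}\cup\mathcal{I}\cup\mathcal{R}$ without contradicting the minimality of the latter (Theorem~\ref{Th Main}); so it has the same number of elements and is itself a minimal system of generators of $I_\mathcal{A}$. Thus $I_\mathcal{A}$ has two distinct minimal systems of generators, a contradiction, and therefore $\mathcal{R}=\varnothing$.

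The main difficulty is the ``only if'' direction, more precisely the step $\mathcal{R}=\varnothing$, and essentially all of it has been absorbed into Theorem~\ref{Th Main}: one needs the explicit shape of a minimal generating set, and in particular the structural fact that the full-support binomials of $\mathcal{R}$ can occur only when $c_1a_1=c_2a_2$. It is exactly this that makes the critical binomial $x_1^{c_1}-x_2^{c_2}$ available to slide one monomial of $f$ and exhibit a competing minimal generator. The only routine care left is the bookkeeping showing that $\mathcal{G}$ is both genuinely \emph{different} from and of the same size as the minimal system of Theorem~\ref{Th Main}, which is handled by the cardinality comparison above.
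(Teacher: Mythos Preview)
Your proof is correct and follows essentially the same route as the paper, which disposes of the theorem in one line: ``Combining Theorem~\ref{Th Main} with Corollaries~\ref{Cor Critical} and~\ref{Cor indispensablebinom3}.'' You have spelled out what that combination actually entails---in particular, invoking Theorem~\ref{Thm Critical11} to pass from indispensability in $C_\mathcal{A}$ to indispensability in $I_\mathcal{A}$ for the ``if'' direction, and producing an explicit competing generator $g$ for the ``only if'' direction---so your argument is a faithful and more detailed unpacking of the paper's sketch rather than a genuinely different approach.
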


In \cite{Ojeda}, it is shown that there exist semigroup ideals of $\mathbbmss{k}[x_1, \ldots, x_4]$ with unique minimal system of binomial generators of cardinality $m,$ for every $m \geq 7.$

\begin{example} Let $\mathcal{A}=\{6, 8, 17, 19\}$. The critical binomial $x_1^{4}-x_2^{3}$ of $I_\mathcal{A}$ is indispensable, while the critical binomial $x_4^{2}-x_1x_2^{4}$ is not indispensable. Thus we are in CASE 4(b). The binomial
$x_1^{2}x_2^{3}-x_3x_4$ belongs to $\mathcal{R}$ and therefore, from Theorem \ref{Thm Critical1}, the toric
ideal $I_{\mathcal{A}}$ does not have a unique minimal system of binomial generators.
\end{example}

\begin{example} Let $\mathcal{A}=\{25, 30, 57, 76\}$, then the minimal number of generators of $I_\mathcal{A}$ equals $8$. The only critical binomials of $I_\mathcal{A}$ are
$\pm(x_1^{6}-x_2^{5})$ and $\pm(x_3^{4}-x_4^{3})$, so we are in CASE 2(b). The binomial
$x_1^{3}x_2^{7}-x_3x_4^{3}$ belongs to $\mathcal{R}$ and therefore, from Theorem \ref{Thm Critical1}, the toric
ideal $I_{\mathcal{A}}$ does not have a unique minimal system of binomial generators.
\end{example}

Observe that $I_\mathcal{A}$ is a complete intersection only in cases 2(a-c), 3 and 4(b). Moreover, except from 2(b), in all the other cases $I_\mathcal{A} = C_\mathcal{A}.$ In the case 2(b) a minimal system of binomial generators is $x_1^{c_1} - x_2^{c_2}, x_3^{c_3} - x_4^{c_4}$ and $x_1^{u_1} x_2^{u_2} - x_3^{u_3} x_4^{u_4}$ where $a_1 u_1 + a_2 u_2 = a_3 u_3 + a_4 u_4 = \mathrm{lcm}(\mathrm{gcd}(a_1,a_2), \mathrm{gcd}(a_3,a_4) )$ (see, \cite{Delorme}).

It is well known that the ring $\mathbbmss{k}[\mathbf{x}]/I_\mathcal{A}$ is Gorenstein if and only if the semigroup $\mathbb{N}\mathcal{A}$ is symmetric, see \cite{Ku}. We will prove that if $\mathbb{N}\mathcal{A}$ is symmetric and $I_\mathcal{A}$ is not a complete intersection, then $I_\mathcal{A}$ has a unique minimal system of binomial generators.

\begin{theorem} \label{Cr1}
If $f_1=x_{1}^{c_1}-x_{3}^{u_{13}}x_{4}^{u_{14}}, \ f_2=x_{2}^{c_2}-x_{1}^{u_{21}}x_{4}^{u_{24}},\ f_3=x_{3}^{c_3}-x_{1}^{u_{31}}x_{2}^{u_{32}}$ and $f_4=x_{4}^{c_4}-x_{2}^{u_{42}}x_{3}^{u_{43}}$ are critical binomials of $I_\mathcal{A}$ such that $\mathrm{supp}(f_i)$ has cardinality equal to $3$, for every $i \in \{1,\ldots,4\}$, then $I_\mathcal{A}$ has a unique minimal system of binomial generators.
\end{theorem}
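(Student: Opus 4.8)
The plan is to verify, under this hypothesis, the criterion of Theorem~\ref{Thm Critical1}: I will show that we are in CASE~1 of Theorem~\ref{Thm Critical2}, that $\mathcal{R}=\varnothing$, and that every $f_i$ is an indispensable binomial. The first preparatory fact is that $u_{ij}<c_j$ for each exponent occurring in $f_1,\dots,f_4$. Indeed, if for instance $u_{13}\ge c_3$, then
$$f_1+x_3^{u_{13}-c_3}x_4^{u_{14}}f_3=x_1^{c_1}-x_1^{u_{31}}x_2^{u_{32}}x_3^{u_{13}-c_3}x_4^{u_{14}}\in I_\mathcal{A},$$
and since $u_{32}>0$ (because $\mathrm{supp}(f_3)$ has cardinality $3$), the subcase $u_{31}<c_1$ contradicts the minimality of $c_1$, whereas the subcase $u_{31}\ge c_1$ forces $u_{32}a_2+\cdots\le 0$, which is impossible. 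The four relations are invariant under the cyclic relabelling $x_1\mapsto x_2\mapsto x_3\mapsto x_4\mapsto x_1$ (each $f_i$ misses exactly the variable $x_{i+1}$, indices modulo $4$), so all remaining inequalities follow the same way.

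Next I would show $c_ia_i\neq c_ja_j$ for all $i\neq j$. For a fixed pair $\{i,j\}$ one of the $f_k$ involves both $x_i$ and $x_j$; if $c_ia_i=c_ja_j$, then $x_i^{c_i}-x_j^{c_j}\in I_\mathcal{A}$, and subtracting it from that $f_k$ and cancelling a common power of a variable (possible since the relevant exponent is $<c_j$ by the previous step) produces a binomial $x_p^{c_p-\bullet}-x_q^{\bullet}\in I_\mathcal{A}$ in the two remaining variables; the monomial $x_q^{\bullet}$ then lies in $\sum_{m\neq q}\mathbb{N}a_m$, giving $c_q\le\bullet<c_q$, a contradiction. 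Since the supports of the $f_i$ have cardinality $3$, no $f_i$ equals $-f_j$, so Proposition~\ref{Prop Critical1b} gives $C_\mathcal{A}=\langle f_1,f_2,f_3,f_4\rangle$; by Theorem~\ref{Thm Critical2} we are in CASE~1 with $\mathcal{S}=\{f_1,f_2,f_3,f_4\}$, and by (the proof of) Theorem~\ref{Th Main}, $\mathcal{R}=\varnothing$ in CASE~1.

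The main work is to prove that $\prod_{j\neq i}x_j^{u_{ij}}$ is an indispensable monomial of $I_\mathcal{A}$ for each $i$; by Theorem~\ref{Th OjVi1} and Proposition~\ref{Prop critical0} this amounts to $\deg_\mathcal{A}^{-1}(c_ia_i)=\{x_i^{c_i},\ \prod_{j\neq i}x_j^{u_{ij}}\}$. Take $i=1$. Minimality of $c_1$ forces any monomial in this fibre other than $x_1^{c_1}$ to be free of $x_1$, hence of the form $x_2^{w_2}x_3^{w_3}x_4^{w_4}$ with $w_2a_2+w_3a_3+w_4a_4=c_1a_1$. If $w_2\ge c_2$ (respectively $w_3\ge c_3$), substituting $c_2a_2=u_{21}a_1+u_{24}a_4$ from $f_2$ (respectively $c_3a_3=u_{31}a_1+u_{32}a_2$ from $f_3$) re-introduces $x_1$ with exponent $c_1-u_{21}$ (respectively $c_1-u_{31}$), which lies in $(0,c_1)$ by the first step, contradicting minimality of $c_1$. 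So $w_2<c_2$ and $w_3<c_3$; if also $w_4<c_4$, then comparing with $f_1$ gives $w_2a_2=(u_{13}-w_3)a_3+(u_{14}-w_4)a_4$, and a short analysis of the signs of $u_{13}-w_3$ and $u_{14}-w_4$ yields one of $c_2\le w_2$, $c_3\le u_{13}-w_3<c_3$, $c_4\le u_{14}-w_4<c_4$ — all impossible — unless $w_2=0$, $w_3=u_{13}$, $w_4=u_{14}$. Finally, if $w_4\ge c_4$, substituting $c_4a_4=u_{42}a_2+u_{43}a_3$ from $f_4$ replaces the monomial by $x_2^{w_2+u_{42}}x_3^{w_3+u_{43}}x_4^{w_4-c_4}$, which is again in the fibre and free of $x_1$, with strictly smaller $x_4$-exponent; an induction on that exponent reduces this case to the preceding ones. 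Hence the fibre is as claimed, so by Theorem~\ref{Thm CombIndBinJ} each $f_i$ is indispensable of $I_\mathcal{A}$ (the cases $i=2,3,4$ following by the cyclic relabelling). Therefore the monomials $\mathbf{x}^{\mathbf{u}_i}$ are indispensable, so $C_\mathcal{A}$ has a unique minimal system of generators (discussion after Theorem~\ref{Thm Critical2}), and since $\mathcal{R}=\varnothing$, Theorem~\ref{Thm Critical1} gives that $I_\mathcal{A}$ has a unique minimal system of binomial generators.

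I expect the genuinely delicate point to be the fibre computation of the third step: for each $i$ there is exactly one of $f_1,\dots,f_4$ (namely the one missing the variable $x_i$) whose rewriting rule does not re-introduce $x_i$, and one must check that repeatedly applying it terminates, which is what forces the induction on the exponent of the corresponding variable. By contrast, the inequalities $u_{ij}<c_j$ and the distinctness of the $c_ia_i$ are comparatively routine; but both of them, and hence the whole argument, rely essentially on each $f_i$ having support of cardinality exactly three.
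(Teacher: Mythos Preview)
Your proof is correct and follows the same overall strategy as the paper: establish $u_{ij}<c_j$, show $c_ia_i\neq c_ja_j$ (so we are in CASE~1, whence $\mathcal{R}=\varnothing$), prove each $f_i$ is indispensable, and conclude via Theorem~\ref{Thm Critical1}. The one substantive difference is in the indispensability step: the paper argues more economically that no critical binomial $g=x_1^{c_1}-x_2^{v_2}x_3^{v_3}x_4^{v_4}$ with $g\neq f_1$ can exist (since then $v_2>0$, forcing $v_3<u_{13}$, $v_4<u_{14}$, hence $v_2\geq c_2$, and then rewriting with $f_2$ contradicts the minimality of $c_1$), which shows $f_i$ is indispensable of $C_\mathcal{A}$; you instead compute the entire fibre $\deg_\mathcal{A}^{-1}(c_ia_i)$ and show it has exactly two elements, which is a stronger statement proved with more work (your induction on the $x_4$-exponent). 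Both are valid; the paper's route avoids the termination argument you correctly flagged as the delicate point. One small imprecision: in your step showing $c_ia_i\neq c_ja_j$, the $f_k$ you need must have $k\in\{i,j\}$ (so that $x_i^{c_i}-x_j^{c_j}$ is at the same degree), and the resulting binomial after cancellation involves one variable from $\{i,j\}$ and one from the complement, not ``the two remaining variables''; the contradiction is then with the minimality of $c_i$ (or $c_j$), not $c_q$.
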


\begin{proof} We have that every exponent $u_{ij}$ of $x_j$ is strictly less than $c_j$, for each $j=1,\ldots,4$. If for instance $u_{13} \geq c_3,$ then $x_1^{c_1}-x_1^{u_{31}}x_2^{u_{32}}x_{3}^{u_{13}-c_3}x_4^{u_{14}}=f_1+x_{3}^{u_{13}-c_3}x_{4}^{u_{14}}f_3 \in I_\mathcal{A}$ and therefore $x_1^{c_1-u_{31}}-x_{2}^{u_{32}}x_3^{u_{13}-c_3}x_4^{u_{14}} \in I_\mathcal{A},$ a contradiction to the minimality of $c_1.$ By Proposition \ref{Prop critical0} we have that $c_ia_i \neq c_ja_j,$ for every $i \neq j.$ We will prove that every $f_i$ is indispensable of $C_\mathcal{A}.$ Suppose for example that $f_1$ is not indispensable of $C_\mathcal{A},$ then there is a binomial $g=x_1^{c_1}-x_2^{v_2}x_3^{v_3}x_4^{v_4} \in I_\mathcal{A}.$ So $x_{3}^{u_{13}}x_{4}^{u_{14}}-x_2^{v_2}x_3^{v_3}x_4^{v_4} \in I_\mathcal{A}$, and thus $v_3<u_{13}$ and $v_4<u_{14}$, since
$u_{13}<c_3$ and $u_{14}<c_4.$ We have that $x_2^{v_2}-x_{3}^{u_{13}-v_3}x_{4}^{u_{14}-v_4} \in I_\mathcal{A}$ and also $x_1^{c_1}-x_1^{u_{21}}x_2^{v_2-c_2}x_3^{v_3}x_4^{u_{24}+v_4}=g+x_2^{v_2-c_2}x_3^{v_3}x_4^{v_4}f_2 \in I_\mathcal{A}.$ Therefore $x_1^{c_1-u_{21}}-x_2^{v_2-c_2}x_3^{v_3}x_4^{u_{24}+v_4} \in I_\mathcal{A},$ a contradiction to the minimality of $c_1.$ Analogously we can prove that $f_2,$ $f_3$ and $f_4$ are indispensable of $C_\mathcal{A}.$ Thus $C_\mathcal{A}$ is generated by its indispensable binomials and therefore, from Theorem \ref{Thm Critical1}, the toric ideal $I_\mathcal{A}$ has a unique minimal system of binomial generators.
\end{proof}

\begin{corollary}\label{Cor3.13}
Let $\mathbb{N}\mathcal{A}$ be a symmetric semigroup. If $I_\mathcal{A}$ is not a complete intersection, then it has a unique minimal system of binomial generators.
\end{corollary}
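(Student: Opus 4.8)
The plan is to reduce the statement to Theorem \ref{Cr1} by producing, under the symmetry hypothesis, critical binomials of exactly the cyclic shape required there. First I would invoke Kunz's theorem \cite{Ku} to translate the hypothesis: $\mathbb{N}\mathcal{A}$ is symmetric precisely when $\mathbbmss{k}[\mathbf{x}]/I_\mathcal{A}$ is Gorenstein. So assume $\mathbbmss{k}[\mathbf{x}]/I_\mathcal{A}$ is Gorenstein of codimension $3$ and that $I_\mathcal{A}$ is not a complete intersection. The key external input is Bresinsky's structure theorem for symmetric semigroups generated by four elements \cite{Bresinsky75}: up to a permutation of $\{1,2,3,4\}$, the ideal $I_\mathcal{A}$ is minimally generated by exactly five binomials, four of which are critical and have the form
$$f_1 = x_1^{c_1} - x_3^{u_{13}} x_4^{u_{14}}, \qquad f_2 = x_2^{c_2} - x_1^{u_{21}} x_4^{u_{24}},$$
$$f_3 = x_3^{c_3} - x_1^{u_{31}} x_2^{u_{32}}, \qquad f_4 = x_4^{c_4} - x_2^{u_{42}} x_3^{u_{43}},$$
with every displayed exponent strictly positive; in particular $\mathrm{supp}(f_i)$ has cardinality $3$ for every $i$. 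That the exponent of $x_i$ in $f_i$ is the minimal $c_i$ follows from Proposition \ref{Prop critical0} (the monomial $x_i^{c_i}$ is indispensable of $I_\mathcal{A}$), so that $f_1,\dots,f_4$ really are critical binomials. This is exactly the hypothesis of Theorem \ref{Cr1}, and that theorem then yields at once that $I_\mathcal{A}$ has a unique minimal system of binomial generators.

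If one prefers to avoid citing \cite{Bresinsky75} verbatim, the same structural conclusion can be extracted from Section 3. By the Buchsbaum--Eisenbud structure theorem for codimension three Gorenstein ideals, $\mu(I_\mathcal{A})$ is odd, and since $I_\mathcal{A}$ is not a complete intersection $\mu(I_\mathcal{A})=5$ (the larger values $\mu\geq 7$ realised in \cite{Ojeda} do not occur under the symmetry hypothesis). Comparing with the remark that $I_\mathcal{A}$ is a complete intersection exactly in CASES 2(b)--(d), 3 and 4(b) of Theorem \ref{Thm Critical2}, we are left with CASES 1, 2(a), 2(c), 4(a); and in CASES 2(a), 2(c), 4(a) the critical ideal $C_\mathcal{A}$ contains a circuit $x_i^{c_i}-x_j^{c_j}$, which (using that the socle of $\mathbbmss{k}[\mathbf{x}]/I_\mathcal{A}$ is one--dimensional, equivalently the pseudo--Frobenius number is unique) forces $I_\mathcal{A}$ to be a gluing of two complete intersection monomial curves, hence itself a complete intersection — a contradiction. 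Thus we are in CASE 1, and a further use of the type--one condition pins down the critical binomials $f_1,\dots,f_4$ to the cyclic, support--$3$ form displayed above, after which Theorem \ref{Cr1} applies.

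The only genuine difficulty is this structural step: turning ``Gorenstein'' (uniqueness of the socle degree) into the combinatorial assertion ``each critical binomial has support of size $3$ and the four of them form a cycle.'' I would take the short route, citing \cite{Bresinsky75}, and record the self-contained derivation above only as a remark. Once the cyclic form of $f_1,\dots,f_4$ is available there is nothing left to do: the proof of Theorem \ref{Cr1} already verifies that each $f_i$ is indispensable of $C_\mathcal{A}$ (so $C_\mathcal{A}$ has a unique minimal system of generators) and that the set $\mathcal{R}$ of Theorem \ref{Th Main} is empty, whence by Theorem \ref{Thm Critical1} the ideal $I_\mathcal{A}$ has a unique minimal system of binomial generators.
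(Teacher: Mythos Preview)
Your main argument is exactly the paper's: invoke Bresinsky's structure theorem \cite{Bresinsky75} to obtain the four critical binomials of cyclic support-$3$ shape, then apply Theorem \ref{Cr1}. The alternative self-contained route you sketch is not needed (and contains some unjustified steps, e.g.\ ruling out $\mu\geq 7$ under symmetry and the gluing argument), but since you explicitly relegate it to a remark and rely on the Bresinsky citation for the actual proof, your proposal is correct and matches the paper.
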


\begin{proof}
From Theorem 3 in \cite{Bresinsky75} the toric ideal $I_\mathcal{A}$ has a minimal generating set consisting of five binomials, namely four critical binomials of the form defined in the above theorem and a non critical binomial. By Theorem \ref{Cr1} the toric ideal $I_\mathcal{A}$ is generated by its indispensable binomials.
\end{proof}

According to \cite[Theorem]{Bresinsky75} the integers $a_i$ are polynomials in the exponents of the binomial in a minimal generating system of $I_\mathcal{A}$. We can see these expressions as a system of four polynomial equations, which in light of Corollary \ref{Cor3.13}, has a unique solution over the positive integers.

\begin{remark}
In \cite[Theorem 6.4]{komeda}, it is shown that if $\mathbb N \mathcal A$ is pseudo-symmetric (see \cite{GSRo} for a definition), then $f_1 = x_1^{c_1} - x_3 x_4^{c_4-1}, f_2 = x_2^{c_2} - x_1^{u_{21}} x_4, f_3 = x_3^{c_3} - x_1^{c_1-u_{21}-1} x_2, f_4 = x_4^{c_4} - x_1 x_2^{c_2-1} x_3^{c_3-1}$ and $g = x_1^{u_{21}+1} x_3^{c_3-1} - x_2 x_4^{c_4-1}$ with $c_i > 1, i = 1, \ldots, 4$, and $u_{21}-1 < c_1$, is a minimal system of generators of $I_\mathcal{A}$. Now, an easy check shows that $c_i a_i \neq c_j a_j$ for every $i \neq j$. The interested reader may prove that $C_\mathcal{A}$ has a unique minimal system of generators if and only if $u_{21} = c_1 - 2$. Thus, since $\mathcal R = \varnothing$, by Theorem \ref{Thm Critical1}, we conclude that $I_\mathcal{A}$ is generated by its indispensable binomials if and only if $c_2 n_2 \neq (c_1-2) n_1 + n_4$.
\end{remark}

If the cardinality of $\mathcal{A}$ is greater than $4$, the analogous of  Corollary \ref{Cor3.13} is not true in general. In \cite{Rosales} it is shown that the semigroup generated by $\mathcal{A} = \{15, 16, 81, 82, 83, 84\}$ is symmetric. Since the monomials $x_1^{11}, x_3 x_6$ and $x_4 x_5$ have the same $\mathcal{A}-$degree, we conclude, by Theorem \ref{Th OjVi1}, that the ideal $I_\mathcal{A}$ does not have a unique minimal system of binomial generators.

\section*{Acknowledgments}

Part of this work was done during a visit of the first author to the University of Extremadura financed by the Plan Propio 2010 of the University of Extremadura. We thank the referee for helpful comments and suggestions that improved the paper.


\end{document}